\documentclass[12pt]{amsart}
\usepackage{todonotes}
\usepackage{amsmath,mathrsfs}
\usepackage{amsthm}
\usepackage{amssymb}
\usepackage{enumerate}
\usepackage{xcolor}
\usepackage{comment}
\usepackage{tikz}
\usepackage{tikz-cd}
\usepackage{color} 
\definecolor{rossred}{rgb}{1.0,0.25,0.66}  
\definecolor{rossgreen}{rgb}{0.25,0.66,0.25} 
\definecolor{rossblue}{rgb}{0.25,0.66,1.0}
\definecolor{sashapurple}{rgb}{0.5,0.15,0.5}
\usepackage[pdfborder={0 0 0}]{hyperref}
\hypersetup{
    colorlinks=true,
    linkcolor=rossblue,
    citecolor=rossgreen
}

\numberwithin{equation}{section}
\theoremstyle{plain}
\newtheorem{theorem}{Theorem}[section]
\newtheorem{proposition}[theorem]{Proposition}
\newtheorem{lemma}[theorem]{Lemma}

\newtheorem{problem}[theorem]{Problem}
\newtheorem{corollary}[theorem]{Corollary}
\newtheorem{conjecture}[theorem]{Conjecture}

\newtheorem{question}[theorem]{Question}
\theoremstyle{definition}

\newtheorem{definition}[theorem]{Definition}

\newtheorem{remark}[theorem]{Remark}

\newtheorem{example}[theorem]{Example}
\newtheorem{notation}[theorem]{Notation}
\newtheorem{comments}[theorem]{Comments}

\AtEndEnvironment{example}{\popQED\endexample}

\usepackage{url}
\usepackage{enumitem}
\usepackage[utf8]{inputenc}
\usepackage[T1]{fontenc}
\usepackage[paperwidth = 8.5in, paperheight = 11in, inner = 1in, outer = 1in, top = 1in, bottom = 1in]{geometry}

\usepackage[dvipsnames]{xcolor}

\usepackage{tikz}
\usepackage{subcaption}
\usetikzlibrary{calc}

\usepackage{tikz}
\usetikzlibrary{arrows.meta}
\tikzset{node/.style={draw, very thick, circle, minimum size=1cm},
    arrow/.style={very thick, -Triangle}}

\newcommand{\trdeg}{\mbox{\rm trdeg}\,}

\newcommand{\htt}{\text{ht}}

\newcommand{\qf}{\text{QF}}
\newcommand{\rk}{\text{rank}}

\usepackage{mathtools}
\author[]{Stephen Landsittel}
\address[S. Landsittel]
{Institute of Mathematics, Hebrew University, Givat Ram, Jerusalem 91904, Israel}
\email{stephen.landsittel@mail.huji.ac.il}

\author[]{Eran Nevo}
\address[E. Nevo]
{Institute of Mathematics, Universidad de Valladolid,  47011 Valladolid, Spain. \and Einstein Institute of Mathematics, Hebrew University, Jerusalem~91904, Israel.}
\email{nevo@math.huji.ac.il}

\keywords{analytic spread, 
matroids, binomial edge ideal, determinantal ideals, hyperconnectivity}
\subjclass[2020]{13F65, 13A30, 05E40, 05C25, 05B35}
\date{\today}

\begin{document}

\begin{abstract}
We give a systematic analysis of the analytic spread of the determinantal ideal $J_{G,H}$ arising from a pair of graphs $(G,H)$. We give sharp bounds for this analytic spread, and combinatorial conditions and obstructions for its maximality via a linear matroid. When $H$ is a single edge, $J_{G,H}$ is isomorphic to the binomial edge ideal $J_G$ and its 
analytic spread is shown to equal the rank of $G$ in Kalai's  $2$-hyperconnectivity matroid. 
\end{abstract}

\title{Analytic spread via 
linear matroids}

\maketitle


\section{Introduction}

The analytic spread of an ideal in a local ring was defined by Northcott and Rees~\cite{ReeseNortcotOG} in studying reductions of ideals. Analytic spread has been of wide interest in commutative algebra during the past three decades, for instance in studying algebraic properties of ideals such as their cores and residual intersections, see for example \cite{BPcores, COSTANTINI-cores, LJPcores-mon}. It is well-known that the analytic spread of an ideal in a local or standard-graded ring $R$ is at most the dimension of $R$, and this bound is sharp. Ideals of maximal analytic spread are of particular interest in the theory of multiplicities, see for instance \cite{mult-max_spread, spread2}. In \cite{HJ-jmult_mon} the analytic spread was studied using the theory of $j$-multiplicities, wherein they show that the analytic spread of an ideal is maximal if and only if the ideal has positive $j$-multiplicity. The relationship between analytic spread and the theory of integral closure was observed in \cite{Filt-spread2, CM-spread, Cut-spread_filt}, and analytic spread was studied in relation to asymptotic depth in \cite{MIRANDA_NETO_2017}. In the context of algebraic geometry, analytic spread describes some aspects of the birational geometry corresponding to blow-ups and their fibers, see \cite{Dale-AG, Hartshorne}. Some additional papers on analytic spread include~\cite{qbrings-spread, M-spread, Burch_1972, bivia2003analytic, spread1, spread2}.

The analytic spread of an ideal $I$ is, under mild assumptions, the minimal number of generators of a reduction of $I$, see e.g.~\cite[Theorem 1]{ReeseNortcotOG}, where $J$ is a reduction of  $I$ if $JI^n=I^{n+1}$ for some $n$.
Precisely, the analytic spread of an ideal $I$ in a local ring $(S,m)$ was defined in \cite{ReeseNortcotOG} to be one more than the degree of the eventual polynomial $P(n):=\dim_{S/m} I^n/mI^n$, or equivalently, the Krull dimension of the special fiber ring $\oplus_{n\geq 0}I^n/mI^n$ of $I$, and is denoted by $\ell(I)$. An analogous definition of analytic spread is adopted when $S$ is a standard-graded ring over a field, where $m$ is the graded maximal ideal. It was proven in \cite[Chapter 8]{SH} that if the field is perfect, then the analytic spread of $I$ is the minimal number of generators of $I$ up to integral closure.
We always have the analytic spread bounded between the height of the ideal and the Krull dimension of its hosting ring: $\htt (I)\leq \ell(I)\leq \dim (S)$. If $I$ is generated by forms of a common degree, then $\ell(I)\leq \mu(I)$, where $\mu(I)$ is the minimal number of generators of $I$. In \cite[Chapters 5 and 8]{SH} they give a detailed analysis of the general properties of analytic spread in the setting of local rings.

A combinatorial case where analytic spread has been computed is the binomial edge ideal $J_G$ associated to a finite graph $G$, defined to be the ideal in a polynomial ring over a field $F$ which is generated by the binomials $f_{i,j}:=x_iy_j-x_jy_i$ over all edges $\{i,j\}$ in $G$. See Remark \ref{rem:H=K_2}. Binomial edge ideals are exactly the ideals generated by  maximal minors of generic matrices with two rows and $|V(G)|$ columns: the subset of minors considered corresponds to $E(G)$.
Binomial edge ideals were introduced independently in \cite{HHHKR, ohtani2011graphs} and have been of recent interest, see e.g. \cite{Ene-Hertzog-Hibi_binom, binom-licci, EZ-binom_edge}. The analytic spread of binomial edge ideals for the class of closed graphs was studied in \cite{kumar2022rees}.

Our first observation gives a combinatorial characterization of $\ell(J_G)$, 
the analytic spread of $J_G$ over a field $F$ of characteristic zero or larger than $2^{|G|}$, which is described in the following theorem and corollary. For a positive integer $n$, let $\mathcal{H}^n_2$ be the 2-hyperconnectivity matroid introduced by Kalai in \cite{Kalai}, and let $\mathcal{H}_2(G)$ be the submatroid of $\mathcal{H}_2^n$ obtained from a graph $G$ on $n$ vertices, whose rank is the rank of $G$ in $\mathcal{H}_2^n$.
Using representability of algebraic matroids over fields of characteristic zero or sufficiently large, see \cite[Theorem 8]{BMS}, and the independence of the hyperconnectivity matroid of field characteristic, see \cite[Corollary 1.13]{tyomkyn-char_p_hyp}, we obtain the following theorem.

\begin{theorem}\label{thm-hyp}
    Over any field of characteristic zero or characteristic larger than $2^{n}$, for every graph $G$ on $n$ vertices, the algebraic matroid of the field extension $F\subset F(f_{i,j}\mid \{i,j\}\in E(G))$ is isomorphic to $\mathcal{H}_2(G)$. In particular, $\ell(J_G) = \rk (\mathcal{H}_2(G))\le 2n-3$.
\end{theorem}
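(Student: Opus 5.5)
The proof has three steps: compute the algebraic matroid of the $f_{i,j}$ via the Jacobian, identify the resulting linear matroid with $\mathcal{H}_2(G)$, and read off $\ell(J_G)$ as a transcendence degree.

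\textbf{Step 1: the algebraic matroid is the row matroid of the Jacobian.} The generators $f_{i,j}$, $\{i,j\}\in E(G)$, are all quadrics. Because of this, the special fiber ring of $J_G$ is isomorphic to the $F$-subalgebra $F[f_{i,j}]\subseteq F[x,y]$, graded so that each $f_{i,j}$ has degree one. Hence
\[
\ell(J_G)=\dim F[f_{i,j}\mid \{i,j\}\in E(G)]=\trdeg_F F(f_{i,j}\mid \{i,j\}\in E(G)),
\]
which is the rank of the algebraic matroid in the statement. This gives the ``in particular'' clause once the matroid is identified.

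To identify the matroid, I will use \cite[Theorem 8]{BMS}. Over characteristic zero, or large enough characteristic, a set of polynomials is algebraically independent if and only if their differentials are linearly independent over $F(x,y)$. So the algebraic matroid equals the row matroid of the Jacobian matrix $M$, which has a row for each edge $\{i,j\}$. That row has $y_j,-x_j$ in the $x_i,y_i$ columns, $-y_i,x_i$ in the $x_j,y_j$ columns, and zeros elsewhere. Linear independence over $F(x,y)$ with indeterminate entries is the same as independence for a generic specialization of $(x_k,y_k)\in F^2$. The bound $2^n$ enters here: the relevant minors are nonzero polynomials whose coefficients are bounded in terms of $n$, so they do not vanish identically mod $p$ for $p>2^n$. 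I will cite the stated range from \cite{BMS} rather than recompute this.

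\textbf{Step 2: identification with Kalai's hyperconnectivity.} Write $u_k=(x_k,y_k)$ and $Ju_k=(y_k,-x_k)$. The row of edge $ij$ is then $Ju_j$ in block $i$ and $-Ju_i$ in block $j$. Applying $J$ to every block is an invertible column operation, so the row becomes $-u_j$ in block $i$ and $u_i$ in block $j$. Kalai's $\mathcal{H}_2^n$ representation uses rows with $u_j$ in block $i$ and $-u_i$ in block $j$, where the $u_k\in F^2$ are generic. The two differ only by the sign of each row, so the row matroids coincide and the matroid is $\mathcal{H}_2(G)$.

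For fields of characteristic $p>2^n$ I need $\mathcal{H}_2$ over $F$ to be the same matroid as in characteristic zero. This is \cite[Corollary 1.13]{tyomkyn-char_p_hyp}. Combining it with the above shows the algebraic matroid is $\mathcal{H}_2(G)$ in all the stated characteristics.

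\textbf{Step 3: the bound.} The rank of $\mathcal{H}_2(K_n)$ is $2n-3$ by Kalai's theorem; this is also the generic rank of the matrix of $K_n$. Directly, the kernel of that matrix contains the three-dimensional space of infinitesimal motions $u_k\mapsto Au_k$ with $A\in\mathfrak{sl}_2$, and for $n\ge 2$ equality holds. Monotonicity of rank under taking subgraphs then gives $\rk\mathcal{H}_2(G)\le 2n-3$ for $n\ge2$.

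I expect the main obstacle to be the characteristic hypothesis in Step 1: checking that \cite[Theorem 8]{BMS} applies with the bound $2^n$, and that genericity over a possibly finite field is handled. I would address the latter by passing to the algebraic closure, which affects neither transcendence degree nor rank.
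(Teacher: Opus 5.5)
Your route is the paper's own. Lemma~\ref{lem0} identifies $\ell(J_G)$ with $\trdeg_F F(f_{i,j})$, and Lemma~\ref{lem2} (\cite[Theorem 8]{BMS}) replaces the algebraic matroid by the row matroid of the Jacobian. That Jacobian is column-equivalent to $\mathcal{H}_2(G)$, and \cite[Corollary 1.13]{tyomkyn-char_p_hyp} handles positive characteristic. Your explicit rotation of each $2$-block and the $3$-dimensional kernel argument for $\le 2n-3$ just spell out what the paper leaves implicit.

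\textbf{The gap: the characteristic range.} The hypothesis of \cite[Theorem 8]{BMS} is $\mathrm{char}(F)>\delta^{r}$, where $\delta$ is the maximal degree and $r$ the transcendence degree. Here $\delta=2$ and $r=\ell(J_G)$, which for $G=K_n$ is $2n-3>n$ once $n\ge 4$. So citing the stated range gives $p>2^{2n-3}$, not $p>2^n$. The paper's one-line proof invokes Lemma~\ref{lem2} without checking the exponent, so it has the same looseness.

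Your explanation via coefficients of minors not vanishing mod $p$ also describes a different issue. That heuristic concerns whether the \emph{linear} matroid of the Jacobian survives reduction mod $p$, which is exactly Tyomkyn's theorem. The BMS bound instead guards against inseparability, where the Jacobian criterion itself fails (e.g.\ $t^p$ is transcendental but $d(t^p)=0$).

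\textbf{A fix valid in every characteristic.} You can avoid BMS in positive characteristic altogether.
\begin{itemize}
\item If $S\subseteq E(G)$ is independent in $\mathcal{H}_2$, Tyomkyn's theorem and your invertible block rotation show the Jacobian rows of $\{f_e\}_{e\in S}$ are independent over $F(x,y)$. Then $\{f_e\}_{e\in S}$ is algebraically independent by Lemma~\ref{lem-ineq-any_characteristic}, which holds over any field.
\item If $S$ is dependent in $\mathcal{H}_2$, the characteristic-zero Jacobian criterion gives a nonzero relation over $\mathbb{Q}$. You may take it to be a primitive polynomial in $\mathbb{Z}[t_e : e\in S]$. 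Since the $f_{i,j}$ have integer coefficients, its reduction mod $p$ is a nonzero relation over $\mathbb{F}_p\subseteq F$.
\end{itemize}
Hence the algebraic matroid equals $\mathcal{H}_2(G)$, and $\ell(J_G)=\rk(\mathcal{H}_2(G))$, over every field. This covers the stated range $p>2^n$ with no appeal to the BMS bound.
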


We do not have a counterexample to Theorem \ref{thm-hyp} over nonzero characteristics smaller than $2^n$. We introduce the assumption on $\text{char}(F)$ as a technical condition for applying \cite[Theorem 8]{BMS}, see Question~\ref{q:char}.

Using Theorem \ref{thm-hyp} we recover an alternate proof of  \cite[Theorems 1.1 and 1.2]{LHVDRDOC} in the case that $F$ has characteristic zero or characteristic larger than $2^{n}$, which states that for a connected graph on $n$ vertices, $n-1\leq \ell(J_G)\leq 2n-3$ and $\ell(J_G) = |E(G)|$ for any unicycle graph $G$. More generally, Theorem \ref{thm-hyp} combined with~\cite[Theorems 4.2 and 4.4]{Bernstein} gives the following  combinatorial characterization of $\ell(J_G)$ over a field of characteristic zero or characteristic larger than $2^{|G|}$.

\begin{corollary}\label{cor-hyp}
Over any field of characteristic zero or characteristic larger than $2^{n}$,
    for every graph $G$ of order $n$, 

$\ell(J_G)$ equals the maximal size $|E(G')|$ running over all the subgraphs $G'$ of $G$ such that there exists an acyclic orientation of the edges of $G'$ that has no alternating closed trail.

In particular,     
$\ell(J_G) = |E(G)|$ if and only if there exists an acyclic orientation of $E(G)$ that has no alternating closed trail.
\end{corollary}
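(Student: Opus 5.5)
The plan is to combine Theorem \ref{thm-hyp} with Bernstein's combinatorial description of independence in the $2$-hyperconnectivity matroid, \cite[Theorems 4.2 and 4.4]{Bernstein}. Theorem \ref{thm-hyp} says that, under the hypothesis on $\text{char}(F)$, $\ell(J_G)$ equals $\rk(\mathcal{H}_2(G))$. The rank of a matroid restricted to a ground set $E(G)$ is the maximum cardinality of an independent subset of $E(G)$. Every subset $E'\subseteq E(G)$ is the edge set of a subgraph $G'$ of $G$, for instance the spanning subgraph on $V(G)$ with edges $E'$. Hence
\[
\ell(J_G)=\max\{|E(G')| : G'\subseteq G,\ E(G') \text{ independent in } \mathcal{H}_2^n\}.
\]

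The key step is to translate independence in $\mathcal{H}_2^n$ into the orientation condition. I would cite Bernstein's theorem, which states that an edge set of a graph is independent in Kalai's $2$-hyperconnectivity matroid if and only if it admits an acyclic orientation with no alternating closed trail. Some care is needed on two points.

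\begin{enumerate}
\item Bernstein's statement must match the convention used in $\mathcal{H}_2^n$, in particular the definition of an alternating closed trail.
\item Independence of an edge set does not depend on $n$ or on isolated vertices. Adding isolated vertices only adds zero columns or free coordinates in Kalai's rigidity-type matrix, so the rank of $E'$ is unchanged.
\end{enumerate}

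One could also allow $G'$ to be a non-spanning subgraph; the same remark shows this makes no difference. Substituting Bernstein's characterization into the displayed maximum gives the first assertion.

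For the \emph{in particular} statement, note that $\ell(J_G)=|E(G)|$ exactly when the maximum is attained at $G'=G$. This holds if and only if $E(G)$ itself is independent, and by Bernstein's characterization that is equivalent to $E(G)$ having an acyclic orientation with no alternating closed trail.

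The main obstacle is the second step. It is essentially a matter of bookkeeping: Bernstein's result may be phrased for $2$-hyperconnectivity over $\mathbb{R}$, or generically over a characteristic-zero field, or via a different but equivalent matrix. I would reconcile this using the characteristic-independence of the hyperconnectivity matroid \cite[Corollary 1.13]{tyomkyn-char_p_hyp}, which was already used in Theorem \ref{thm-hyp}. With that, the combinatorial characterization transfers to the field $F$ at hand.
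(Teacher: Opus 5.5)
Your proposal is correct and is essentially the paper's own argument. Theorem~\ref{thm-hyp} gives $\ell(J_G)=\rk(\mathcal{H}_2(G))$, this rank equals $|E(G')|$ for a subgraph $G'$ whose rows form a maximal independent set, and Bernstein's Theorems 4.2 and 4.4 turn independence of the rows of $\mathcal{H}_2(G')$ into the orientation condition. Your extra checks on isolated vertices, on spanning versus non-spanning subgraphs, and on transferring Bernstein's characteristic-zero statement via \cite[Corollary 1.13]{tyomkyn-char_p_hyp} are bookkeeping that the paper leaves implicit.
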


The proof of Theorem \ref{thm-hyp} combines an expression of $\ell(J_G)$ as the rank of the algebraic matroid of the field extension $F\subset F(f_{i,j}\mid \{i,j\}\in E(G))$, see Lemma \ref{lem0}, and the classical fact that the algebraic matroid is isomorphic to the linear matroid of its derivations, when the characteristic is zero or large enough; the latter matroid equals $\mathcal{H}_2(G)$.

Next, we consider the analytic spread of binomial edge ideals $J_{G,H}$ associated to a pair of finite graphs $G$ and $H$, which were introduced in \cite{Binom_gen}. The ideal $J_{G,H}$ is defined to be the ideal generated by the binomials $x_{i,k}x_{j,l}-x_{i,l}x_{j,k}$ over all edges $\{i,j\}\in E(G)$ and $\{k,l\}\in E(H)$. Binomial edge ideals of pairs of graphs are ideals generated by appropriate $2$-minors of generic matrices. In the case that $H = K_2$, we recover the binomial edge ideal of the graph $G$.

\begin{problem}\label{prob:char-l(J_{G,H})}
Find a combinatorial characterization of $\ell(J_{G,H})$.
\end{problem}

In \cite{Binom_gen_paths} the analytic spread of a pair of a path and a complete graph $J_{P_n,K_m}$ was calculated to be $nm - 3$ for every $n,m\geq 3$. However, outside of this case and the analytic spread of $J_G = J_{G,K_2}$, the analytic spread of $J_{G,H}$ is not well-understood. For a graph $G$, the ideals $J_{G,K_n}$ are called \emph{generalized binomial} edge ideals and were studied in \cite{GeneralizedBinom, GeneralizedBinomBlock}. 

Again, over a field of characteristic zero or sufficiently large characteristic, we describe a matrix $\mathcal{H}(G,H)$, whose linear matroid we compare to the analytic spread of $J_{G,H}$ using \cite[Theorem 8]{BMS}, to prove the following result in Section \ref{sec4}.
\begin{proposition}\label{propB}
    For any graphs $G$ and $H$, the rank of the Jacobian $\mathcal{H}(G,H)$ of the minimal binomial generators of $J_{G,H}$ equals the analytic spread of $J_{G,H}$, over any field of characteristic zero or larger than $2^{|V(G)||V(H)|}$.
\end{proposition}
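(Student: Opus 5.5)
The plan mirrors the proof of Theorem \ref{thm-hyp}, in three steps: identify $\ell(J_{G,H})$ with a transcendence degree, identify that transcendence degree with the rank of the algebraic matroid of the generators, and identify that algebraic matroid with the linear matroid of the Jacobian $\mathcal{H}(G,H)$.

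\textbf{Step 1: analytic spread as a transcendence degree.} Write $g_{ij,kl}=x_{i,k}x_{j,l}-x_{i,l}x_{j,k}$ for $\{i,j\}\in E(G)$ and $\{k,l\}\in E(H)$. These are the minimal generators of $J_{G,H}$. They are distinct, linearly independent quadrics, since distinct pairs of edges produce distinct sets of monomials. Because $J_{G,H}$ is generated by forms of the common degree $2$, its special fiber ring is isomorphic to the $F$-subalgebra $F[g_{ij,kl}]$ of the polynomial ring. Its Krull dimension is therefore $\trdeg_F F(g_{ij,kl})$, the rank of the algebraic matroid of the extension $F\subset F(g_{ij,kl}\mid \{i,j\}\in E(G),\{k,l\}\in E(H))$. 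This is exactly the content of Lemma \ref{lem0} in the case $H=K_2$, and the argument does not depend on the specific generators, only on their common degree. So I would invoke Lemma \ref{lem0} directly, or repeat its proof verbatim.

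\textbf{Step 2: from the algebraic matroid to the Jacobian.} Next I would apply \cite[Theorem 8]{BMS}. For polynomials $g_1,\dots,g_m$ over a field of characteristic zero, or of characteristic larger than a bound depending on the degrees and the number of variables, a subset is algebraically independent if and only if the corresponding rows of the Jacobian matrix $(\partial g_s/\partial x_{i,k})$ are linearly independent over $F(x)$. In particular the two ranks agree. The matrix $\mathcal{H}(G,H)$ is this Jacobian: its rows are indexed by $E(G)\times E(H)$ and its columns by the $|V(G)||V(H)|$ variables. The row for $(\{i,j\},\{k,l\})$ has entries $x_{j,l},\,-x_{j,k},\,-x_{i,l},\,x_{i,k}$ in the columns $x_{i,k},\,x_{i,l},\,x_{j,k},\,x_{j,l}$ respectively.

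\textbf{Step 3: the characteristic bound (main obstacle).} The step needing care is checking that the hypothesis of \cite[Theorem 8]{BMS} is met by the bound $\mathrm{char}(F)>2^{|V(G)||V(H)|}$. In the graph case, Theorem \ref{thm-hyp} used the bound $2^n$, where the relevant number of variables is $2n$ and all degrees equal $2$. I expect the BMS condition to be of the form $\mathrm{char}(F)>\prod_s \deg g_s$ taken over an algebraically independent subset. Such a subset has size at most the number of variables $N=|V(G)||V(H)|$, and every degree is $2$, so the bound becomes $2^{N}$. I would verify this exact form against \cite{BMS}. If their bound is instead phrased in terms of the transcendence degree $r\le N$, then $2^r\le 2^N$ still suffices.

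Combining Steps 1 and 2 gives $\ell(J_{G,H})=\rk\,\mathcal{H}(G,H)$, the rank taken over $F(x)$. Finally, I would remark that this rank equals the rank of $\mathcal{H}(G,H)$ at a generic point of $F^N$, provided $F$ is infinite. This holds in characteristic zero; in positive characteristic one can pass to the algebraic closure, which changes neither rank. This justifies speaking of the linear matroid of $\mathcal{H}(G,H)$.
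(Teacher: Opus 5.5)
Your proposal is correct and follows the paper's proof essentially verbatim. Lemma~\ref{lem0} identifies $\ell(J_{G,H})$ with the transcendence degree of the generators. Lemma~\ref{lem2}, the paper's paraphrase of \cite[Theorem 8]{BMS} with hypothesis $\mathrm{char}(F)>\max_i\deg(g_i)^{\trdeg}$, then gives $\rk(\mathcal{H}(G,H))$, and $2^{|V(G)||V(H)|}$ suffices since $\trdeg\le|V(G)||V(H)|$ by Remark~\ref{rmk-upper}---exactly your second reading of the BMS condition. (Lemma~\ref{lem0} is already stated for any ideal generated by forms of a common degree, not only $H=K_2$, so it applies directly.)
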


In general, for ideals $I\subset J$ in a polynomial ring, we do not have $\ell(I)\leq \ell(J)$. However, this monotonicity property holds on pairs of graphs, as the above matrix $\mathcal{H}(G,H)$ shows:
\begin{corollary}\label{corr-mon} (Monotonicity)
    For every four graphs $G\subset G'$ and $H\subset H'$, we have $\ell(J_{G,H})\leq \ell(J_{G',H'})$ over any field of characteristic zero or larger than $2^{|V(G')||V(H')|}$.
\end{corollary}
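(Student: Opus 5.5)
By Proposition \ref{propB}, it suffices to compare ranks of Jacobian matrices. For $G\subset G'$ and $H\subset H'$ we have $\ell(J_{G,H})=\rk \mathcal{H}(G,H)$ and $\ell(J_{G',H'})=\rk \mathcal{H}(G',H')$. This holds over a field of characteristic zero or larger than $2^{|V(G')||V(H')|}$, which is at least $2^{|V(G)||V(H)|}$ after harmlessly regarding $G,H$ as graphs on the vertex sets of $G',H'$ (see the last paragraph). So the claim reduces to
\[
\rk \mathcal{H}(G,H)\le \rk \mathcal{H}(G',H').
\]

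The key step is to exhibit $\mathcal{H}(G,H)$, up to adding zero columns, as a row-submatrix of $\mathcal{H}(G',H')$. The rows of $\mathcal{H}(G,H)$ are indexed by the minimal binomial generators $x_{i,k}x_{j,l}-x_{i,l}x_{j,k}$ with $\{i,j\}\in E(G)$ and $\{k,l\}\in E(H)$. Each such pair of edges also lies in $E(G')\times E(H')$, so every generator of $J_{G,H}$ is one of the defining generators of $J_{G',H'}$. These generators are distinct up to sign for distinct pairs of edges, and are minimal generators, since each is the unique generator of $J_{G',H'}$ whose support contains the monomial $x_{i,k}x_{j,l}$. The row of $\mathcal{H}(G,H)$ for a generator is its gradient with respect to the variables $x_{a,b}$, $a\in V(G)$, $b\in V(H)$. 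The corresponding row of $\mathcal{H}(G',H')$ is the gradient with respect to the larger variable set indexed by $V(G')\times V(H')$. It agrees with the former on the common variables and is zero on the new variables, since the generator does not involve them. Hence $\mathcal{H}(G,H)$ padded with zero columns is a submatrix of $\mathcal{H}(G',H')$ formed by a subset of its rows, and the rank of a submatrix is at most the rank of the whole matrix. Here rank means rank over the fraction field of the polynomial ring, equivalently the rank of a generic specialization.

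The only point needing care, and the expected minor obstacle, is the bookkeeping of ambient rings when the vertex sets grow. Adjoining extra variables $x_{a,b}$ that do not occur in the generators changes neither the Jacobian rank nor the analytic spread. The Jacobian rank is unchanged because only zero columns are added. The analytic spread is unchanged because the special fiber ring of an ideal generated by forms of a common degree is the $F$-algebra generated by those forms, which does not depend on the ambient polynomial ring. The characteristic bound for $(G,H)$ then follows from that for $(G',H')$, since $|V(G)||V(H)|\le |V(G')||V(H')|$. Combining this with Proposition \ref{propB} gives the monotonicity.
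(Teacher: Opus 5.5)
Your proof is correct and is essentially the paper's argument: Proposition~\ref{propB} turns both analytic spreads into Jacobian ranks, and $\mathcal{H}(G,H)$, padded with zero columns, is a row-submatrix of $\mathcal{H}(G',H')$, so the rank inequality follows. Your bookkeeping spells out what the paper leaves implicit, namely the extra variables and the comparison $2^{|V(G)||V(H)|}\le 2^{|V(G')||V(H')|}$ of characteristic bounds.
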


The following theorem gives tight bounds for $\ell(J_{G,H})$.

\begin{theorem}\label{thmA}
    Let $m,n\geq 3$. Let $G$ and $H$ be graphs on $n$ and $m$ vertices (with no isolated vertices), resp. Then 
    the following statements hold over any field.
    \begin{enumerate}
        \item[(1)]Let $c_G$ and $c_H$ be the number of connected components in $G$ and $H$ resp. Then 
        \begin{equation*}
        (n-c_G)(m-c_H)\leq \ell(J_{G,H})\leq mn.
    \end{equation*}The lower bound is achieved for every pair of forests. The upper bound 
    is achieved for every pair of complete graphs.
    \item[(2)] When $G$ is the path on $n$ vertices and $H$ is a unicycle
    \begin{equation*}
        \ell(J_{G,H}) = (n-1)m \text{ }(= |E(G)||E(H)|).
    \end{equation*}
    \end{enumerate}
\end{theorem}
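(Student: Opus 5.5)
The plan is to work with the description of $\ell(J_{G,H})$ that holds over any field. Every minimal generator $g=x_{i,k}x_{j,l}-x_{i,l}x_{j,k}$ is a quadric, so the special fiber ring of $J_{G,H}$ is the subalgebra $F[g\,:\,\{i,j\}\in E(G),\{k,l\}\in E(H)]$ of $S=F[x_{a,b}]$. Hence $\ell(J_{G,H})$ is the transcendence degree of $F(g)$ over $F$, which is the characteristic-free part of Lemma \ref{lem0}. Two consequences need no hypothesis on the characteristic.
\begin{enumerate}
\item[(a)] Monotonicity in the sense of Corollary \ref{corr-mon} holds, because $G\subset G'$ and $H\subset H'$ give $F(g)_{G,H}\subset F(g)_{G',H'}$.
\item[(b)] We have $\ell(J_{G,H})\le \min\{mn,\ |E(G)||E(H)|\}$, using $\ell\le\dim S$ and $\ell\le\mu$.
\end{enumerate}
Algebraic independence of a family of polynomials will always be shown by exhibiting a weight order whose initial monomials have linearly independent exponent vectors. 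Such monomials are algebraically independent, and then so are the polynomials, over any field.

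\emph{Lower bound in (1), and equality for forests.} Choose spanning forests $T\subset G$ and $T'\subset H$, which have $n-c_G$ and $m-c_H$ edges. By (a) it suffices to prove that the $|E(T)||E(T')|$ generators of $J_{T,T'}$ are algebraically independent. Root every component and let $p(\cdot)$ denote the parent and $d(\cdot)$ the depth. Give $x_{a,b}$ the weight $d(a)d(b)$. For the edges $\{p(j),j\}$ and $\{p(l),l\}$ the two terms differ in weight by $\bigl[(d_j-1)(d_l-1)+d_jd_l\bigr]-\bigl[(d_j-1)d_l+d_j(d_l-1)\bigr]=1$. So the initial monomial is $x_{p(j),p(l)}x_{j,l}$, and these are indexed by pairs $(j,l)$ of non-root vertices. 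The variable $x_{j,l}$ occurs in the monomial of $(j,l)$ and otherwise only in monomials of pairs $(j',l')$ with $p(j')=j$ and $p(l')=l$, which are strictly deeper. Ordering the pairs by $d(j)+d(l)$ therefore makes the exponent matrix triangular, giving independence. When $G$ and $H$ are themselves forests, (b) gives the matching upper bound, so equality holds.

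\emph{Upper bound in (1), attained by complete graphs.} The bound $\ell\le mn$ is (b). For $G=K_n$ and $H=K_m$ the generators are all $2$-minors of the generic $n\times m$ matrix $X$, and it suffices to show that $F(x_{a,b})$ is algebraic over $F(\text{minors})$. I plan to show that the map $X\mapsto\wedge^2X$ has finite generic fibres when $n,m\ge3$. If $\wedge^2X=\wedge^2Y$ with $X$ of rank $\ge3$, then $X$ and $Y$ have the same row space: it is recovered from the span of the decomposable $2$-vectors. Hence $Y=AX$ for some $A$ with $\wedge^2A=\mathrm{id}$, which forces $A=\pm I$ because $\mathrm{rank}\ge3$. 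Alternatively one can cite the dimension computation for algebras of $t$-minors with $t<\min(m,n)$ (Bruns--Vetter).

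\emph{Part (2).} The upper bound $(n-1)m$ is (b), since a unicycle has $m$ edges. For the lower bound, number the path $1,\dots,n$ and give $x_{a,b}$ the weight $a\cdot c(b)$ for an injective $c:V(H)\to\mathbb Z$. For the path edge $\{a,a+1\}$ and an edge $\{k,l\}$ with $c(k)<c(l)$, the initial monomial is $x_{a,k}x_{a+1,l}$, so the exponent vectors are $e_a\otimes e_k+e_{a+1}\otimes e_l$ for an acyclic orientation of $H$. Suppose a linear relation holds among these vectors. Peeling rows off from $a=n$ downwards, the coefficients in row $a$ must satisfy ``in-sum at each vertex $l$'' conditions coming from row $a+1$ and ``out-sum at each $k$'' conditions from row $a-1$. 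For a tree-like part this forces zero, but the cycle of $H$ can leave a one-dimensional kernel, for instance a vertex of in-degree $2$. I expect this to be the main obstacle. My first attempt will be to vary $c$ with the row, e.g.\ reversing the induced orientation on alternate path edges, so that on the cycle the in-sum and out-sum conditions together become injective. Failing that, I will use a different leading term on the single edge closing the cycle. That would mimic the unicycle computation $\ell(J_G)=|E(G)|$ in the $H=K_2$ case, which is row-by-row the same situation.
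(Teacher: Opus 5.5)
Your part (1) is correct, and it argues differently from the paper. The paper gets the forest bound from the rank of the Jacobian $\mathcal{H}(T,T')$, via leaf additions and a specialization, and then uses that Jacobian rank is at most the transcendence degree. Your depth-product weights instead give initial monomials with a unitriangular exponent matrix, which is self-contained and visibly characteristic-free. Your finite-fibre argument for $(K_n,K_m)$ is a self-contained version of the fact cited in Lemma~\ref{lem-complete}. Two small repairs are needed there. First transpose so that $n\le m$: if $n>m$, a generic $X$ lacks full row rank and $Y=AX$ does not force $\wedge^2A=\mathrm{id}$. Also, the conclusion $A=\pm I$ uses $n\ge 3$, not the rank.

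The genuine gap is part (2): you never prove that the vectors $e_a\otimes e_k+e_{a+1}\otimes e_l$ are independent, and the obstruction is misdiagnosed.
\begin{itemize}
\item Regard each vector as an edge of a graph $\Gamma$ on $[n]\times V(H)$ joining $(a,k)$ and $(a+1,l)$. Edges only join consecutive layers, so $\Gamma$ is bipartite. These $0/1$ vectors are then independent exactly when $\Gamma$ is a forest: a cycle gives an alternating $\pm1$ relation, and a forest is handled by peeling leaves.
\item A vertex $(a,v)$ has $\Gamma$-degree at most $\deg_H(v)$. So stripping leaves of $H$ repeatedly shows that every cycle of $\Gamma$ lies over the cycle $C$ of $H$. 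There each vertex $(a,v)$ has degree at most $2$, and its two edges lie over the two distinct edges of $C$ at $v$.
\item Hence a cycle of $\Gamma$ winds monotonically $t\ge 1$ times around $C$ and changes the layer by $t(f-b)$. Here $f$ and $b$ count the arcs of $C$ oriented with and against the direction of travel. So $\Gamma$ is a forest whenever $f\ne b$.
\end{itemize}
The bad case is a balanced orientation, e.g.\ $c=(1,3,2,4)$ on $C_4$. It is not a vertex of in-degree $2$: $C_4$ oriented $1\to2\to3\to4$, $1\to4$ gives a forest.

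Your first remedy in fact always fails for $n\ge 3$. Let $v$ be a source of $C$ with out-neighbours $k,l$ in row $a$, and reverse the orientation in row $a+1$. Then $(a,v),(a+1,k),(a+2,v),(a+1,l)$ is a $4$-cycle of $\Gamma$. For $C_3$ with $1\to2$, $1\to3$, $2\to3$ this is the identity $x_{1,1}x_{2,2}\cdot x_{2,3}x_{3,1}=x_{1,1}x_{2,3}\cdot x_{2,2}x_{3,1}$.

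The missing step is simply to take $c$ increasing along the cycle $v_1,\dots,v_k$, giving arcs $v_i\to v_{i+1}$ and $v_1\to v_k$, so $f-b=k-2\ne 0$. With that choice your weight argument proves (2) over any field and for every unicycle at once. The paper instead treats $H=C_m$ by exhibiting an $(n-1)m$ minor of $\mathcal{H}(P_n,C_m)$ whose expansion has a monomial coming from a unique permutation.
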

Over characteristic zero or larger than $2^{|V(G)||V(H)|}$, by counting rows, the matrix $\mathcal{H}(G,H)$ of Proposition \ref{propB} shows that $\ell(J_{G,H})\leq |E(G)||E(H)|$. 
As a special case of Problem~\ref{prob:char-l(J_{G,H})}, and in view of Proposition~\ref{propB}, we propose to characterize ``independent" pairs first:
\begin{problem}\label{prob:char_ind_(G,H)}
Characterize the pairs of graphs $(G,H)$ for which  $\ell(J_{G,H}) = |E(G)||E(H)|$. 
\end{problem}
Theorem \ref{thmA} gives some families of pairs of graphs which satisfy this equality. The following theorem, combined with the monotonicity Corollary~\ref{corr-mon} states combinatorial obstructions to the equality, namely specifies ``dependent" pairs.
\begin{theorem}\label{thmA-2}
    For connected graphs $G$ and $H$ on at least three vertices each, we have $\ell(J_{G,H})<|E(G)||E(H)|$ over any field of characteristic zero or larger than $2^{|V(G)||V(H)|}$, in each of the following cases:
    \begin{enumerate}
        \item[(1)] when $G$ contains a star of larger order than some cycle in $H$;
        \item[(2)] when both $G$ and $H$ contain a cycle, and at least one of the cycles is even;
        \item[(3)] when one of $G$ or $H$ contains a cycle, and the other contains a cycle with a leaf. 
    \end{enumerate}
\end{theorem}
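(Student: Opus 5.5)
The plan is to reduce each case to a small dependent configuration of generators. The tool for detecting dependence is an invariance argument on the generic matrix $X=(x_{i,k})$, with rows indexed by $V(G)$ and columns by $V(H)$.

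\emph{Reduction to subpairs.} By Proposition~\ref{propB}, $\ell(J_{G,H})$ is the rank of the Jacobian $\mathcal{H}(G,H)$. This matrix has one row for each pair $(e,e')\in E(G)\times E(H)$. Suppose $G_0\subset G$ and $H_0\subset H$, and the rows indexed by $E(G_0)\times E(H_0)$ are linearly dependent. Then these rows are a dependent subset of the rows of $\mathcal{H}(G,H)$, so $\ell(J_{G,H})<|E(G)||E(H)|$. (This is the row-level version of Corollary~\ref{corr-mon}.) Equivalently, by the algebraic-matroid interpretation behind Proposition~\ref{propB}, it suffices to show
\[
\trdeg_F F(f_{e,e'}\mid e\in E(G_0),\,e'\in E(H_0))<|E(G_0)||E(H_0)|.
\]
Since $J_{G,H}\cong J_{H,G}$ by transposing $X$, we may also swap the roles of $G$ and $H$ in (2) and (3).

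\emph{Key lemma.} Let $V$ be the affine space of the variables $x_{i,k}$ with $i\in V(G_0)$ and $k\in V(H_0)$, so $\dim V=|V(G_0)||V(H_0)|$. Suppose a connected algebraic group acts on $V$, fixes every generator $f_{e,e'}$, and has generic orbits of dimension $d$. Then the generators factor through a generic quotient of dimension $\dim V-d$, and so their transcendence degree is at most $|V(G_0)||V(H_0)|-d$. I then apply this lemma case by case.

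\emph{Case (2).} Take $G_0=C_a$ and $H_0=C_b$, cycles in $G$ and $H$, with $b$ even. Then $\dim V=ab=|E(G_0)||E(H_0)|$, so it suffices to find $d\ge 1$. Consider the torus $x_{i,k}\mapsto t_ic_kx_{i,k}$, whose effective dimension on $V$ is $a+b-1$. The minor for $\{i,j\}$ and $\{k,l\}$ scales by $t_it_jc_kc_l$. Choose $t\equiv 1$ and $c_k$ alternating $\lambda,\lambda^{-1}$ around the even cycle. Then every $c_kc_l=1$, so this is a one-parameter subgroup fixing all generators. It acts nontrivially, hence $d\ge1$. (When both cycles are odd, this log-linear system has only the trivial effective solution, which is consistent with the expected independence.)

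\emph{Case (3).} Take $G_0$ to be a cycle $C_a$ together with a pendant vertex $p$ attached at $q$, and $H_0=C_b$. Then $\dim V=(a+1)b=|E(G_0)||E(H_0)|$. The shears $\mathrm{row}_p\mapsto \mathrm{row}_p+s\,\mathrm{row}_q$ leave every $2$-minor using rows $\{p,q\}$ unchanged. They do not touch the other rows, since $p$ lies only on the edge $pq$. These shears have $1$-dimensional orbits, so $d\ge1$.

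\emph{Case (1).} Take $G_0=K_{1,s}$ with center $0$, and $H_0=C_k$ with $k\le s$. Here $\dim V=(s+1)k$ while there are $sk$ rows, so we need $d\ge k$. The group combines three pieces. First, the $s$ shears $\mathrm{row}_i\mapsto\mathrm{row}_i+s_i\,\mathrm{row}_0$; each preserves all minors with rows $\{0,i\}$. Second, the scaling $t_0\mapsto \mu$, $t_i\mapsto \mu^{-1}$. Third, a further torus direction when $k$ is even. Together these give $d\ge s+1\ge k+1$, whence $\trdeg\le (s+1)k-(s+1)<sk$ as soon as $k\le s$. The main obstacle is this case: verifying that the orbits are generically of the claimed dimension, meaning the shears and scalings are independent and act freely at a generic point. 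I would check this by computing the tangent vectors at a generic matrix, where the shear directions live in distinct leaf rows and the scaling is visibly transverse to them. If the count came out short, I would instead exhibit an explicit linear dependence among the Jacobian rows for the configuration $K_{1,k}$ versus $C_k$.
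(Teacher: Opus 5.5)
Your argument is correct, and the symmetries you use are the ones behind the paper's proof. In the paper, (2) is proved by the right-kernel vector $L$ with blocks $(-1)^{i+1}U_i$, which is the infinitesimal generator of your alternating torus (placed on the rows of the even cycle rather than the columns). Part (3) is proved by the vector with entries $x_{i,1}$ in the leaf column, which is the derivative of your shear adding column $1$ to the leaf column. Part (1) uses the vectors $u_i$ (your shears) together with $A$ (your center/leaf scaling); the extra vectors $C,D$ of Lemma~\ref{lem-star}(b) correspond to your optional third torus direction for even cycles.

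What differs is how the symmetry becomes a bound. The paper checks by block multiplication that these vectors lie in the right kernel of $\mathcal{H}(G_0,H_0)$, shows they are independent by a specialization, and then needs Proposition~\ref{propB}, hence the characteristic hypothesis via Lemma~\ref{lem2}, to pass from Jacobian rank to $\ell$. You instead bound $\trdeg$ directly by the generic fiber (or Rosenlicht quotient) dimension, which works in every characteristic.

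Two adjustments would make your proof valid over an arbitrary field, which bears on Question~\ref{q:char}. First, use Lemma~\ref{lem0} instead of Proposition~\ref{propB} in your reduction step. Second, read off orbit dimensions from stabilizers. For example, in (1) the element $\begin{pmatrix}\mu&0\\ v&\mu^{-1}I\end{pmatrix}$ fixes a matrix with nonzero center row only when $\mu=1$ and $v=0$, so your ``main obstacle'' is immediate.

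The paper's route, in exchange, is entirely explicit linear algebra. It also gives rank statements about the Jacobian matroid $\mathcal{H}^{n,m}$ itself, which is the object of the paper's basis conjectures.

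One small slip: in (1) you need $d>k$, not $d\ge k$, although the inequality you finally write is correct. The third torus direction is not needed.
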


\begin{example}
    One can find many examples of small pairs of graphs $(G,H)$ such that $\ell(J_{G,H})<|E(G)||E(H)|$, say over characteristic zero, by experimenting with the command analyticSpread in the computer algebra software \emph{Macaulay2}. For example if $H$ is the 
    diamond graph (See Figure~\ref{fig:small_graphs} on p.6), the fact that $\ell(J_{P_3,H})<|E(P_3)||E(H)|$ where $P_3$ is the path on $3$ vertices, combined with Corollary \ref{corr-mon} shows that $\ell(J_{G,H})<|E(G)||E(H)|$ for 
    any connected graph $G$ on at least three vertices.
\end{example}

Regarding bases, at least over characteristic zero or large enough, we conjecture:

\begin{conjecture}\label{conj:odd_cycles_basis}
Let $n,m\ge 3$. 
    A pair of graphs $(G,H)$ on $n$ and $m$ vertices resp. has the property that the rows of the Jacobian $\mathcal{H}(G,H)$ indexed by $E(G)\times E(H)$ form a basis to the row space of  $\mathcal{H}(K_n,K_m)$, equivalently $\ell(J_{G,H}) = |E(G)||E(H)| = |V(G)||V(H)|$, if and only if both $G$ and $H$ are disjoint unions of odd cycles.
\end{conjecture}

By Proposition \ref{propB} and \cite[Theorem 1.4]{spread2} we have that the rows of $\mathcal{H} (G,H)$ form a basis of the row space of $\mathcal{H}(K_n,K_m)$ if and only if $\varepsilon(J_{G,H})>0$, where $\varepsilon(I)$ is the epsilon multiplicity of an ideal $I$ in a polynomial ring, as defined in~\cite{UV} and is discussed in \cite{CL, Cut1}. Note that \cite{CL, Cut1} define epsilon multiplicity in a local ring, and we can compute the epsilon multiplicity of $J_{G,H}$ by localizing at the graded maximal ideal. 
However, epsilon multiplicity is often difficult to compute.
In Theorem \ref{thmA} and Theorem \ref{thmA-2} we give combinatorial conditions for independence and dependence resp. in the matroid $\mathcal{H}^{n,m}:=\mathcal{H}(K_n,K_m)$, bypassing this difficulty in these special cases.

The history of when algebraic independence can be represented by linear independence of derivations traces back to an 1841 paper of Jacobi \cite[p.340, Propositio 1]{Jacobi}, in which it was proven that the Jacobian of any real analytic functions (e.g. polynomials with real coefficients) vanishes if they are algebraically dependent. Jacobi's result precedes the modern languages of matroids, derivations, and field characteristic. Around a century later, Van der Waerden showed in \cite{VanDerWaerden} that algebraic independence forms a matroid. In 1957 Rado proved that if a matroid is representable over a field of characteristic zero (e.g. is an algebraic matroid over the rationals), then for $p>>0$ it is representable over some, and hence infinitely many, fields of characteristic $p$. See \cite[Theorem 6]{Rado}. Finally in \cite[Theorem 8]{BMS} it was shown that over any field of either characteristic zero, or sufficiently high characteristic, an algebraic matroid is linear. In the results of both \cite{BMS, Rado}, the characteristic size requirement depends on the particular algebraic matroid at hand. In \cite{ZS1}, Zariski and Samuel reformulated Jacobi's result as a correspondence between separating transcendence bases and derivations in the language of modern commutative algebra.

\begin{question}\label{q:char}
    Can the characteristic conditions in Theorem \ref{thm-hyp} and Proposition \ref{propB} be removed? Is it true that for any field $F$, every algebraic matroid $F\subset F(g_1,\ldots,g_N)$ defined by degree two forms $g_i$ is representable?
\end{question}

\textbf{Outline.}
In Section \ref{sec-prelim} we give preliminaries coming from commutative algebra, the theory of binomial edge ideals, and the theory of matroids, which will be used throughout the paper. In Section \ref{sec3} we prove Theorem \ref{thm-hyp} and Corollary \ref{cor-hyp}. In Section \ref{sec4}, we discuss how applying common graph operations, such as adding leaves and handles, to graphs $G$ and $H$, affects the analytic spread of $\ell(J_{G,H})$. In Section \ref{sec5} we prove Theorem \ref{thmA} (1). In Section \ref{sec6} we focus on independent pairs of graphs. Section \ref{sec-7:prfA_pt1} is dedicated to the proof of Theorem \ref{thmA} (2). In Sections \ref{sec-8:prfA_part2}, \ref{sec-9:pf2}, and \ref{sec10:pf-2_pt2} we prove parts (1), (2), and (3) of Theorem \ref{thmA-2}, respectively.

\section{Preliminaries}\label{sec-prelim}

In order to prove the main theorems, we utilize theorems and notation from graph theory, the theory of matroids, and commutative algebra. During this section we recall the background from these topics which will be essential to our analysis in Sections \ref{sec3} and onward.

\subsection{Graph Theory}

We recall some notation and definitions following 
\cite{diestel, bondy_murty-graph_theory}.

An (undirected) graph is a pair $G = (V,E)$ where $V$ is a set and $E$ is a family of doubletons $\{u,v\}$ with $u,v\in V$. We write $V(G) = V$ and $E(G) = E$. The elements of $V$ are called \emph{vertices} and the elements of $E$ are called \emph{edges}. $G$ is called \emph{simple} when $u\neq v$ for all $\{u,v\}\in E(G)$. When $|V(G)|$ is finite, we call $G$ a \emph{finite graph} and we call $|V(G)|$ the \emph{order} of $G$. If $v\in V(G)$ and there does not exist $e\in E(G)$ with $v\in e$, then we say that $v$ is \emph{isolated}. Isolated vertices in graphs $G$ and $H$ do not contribute to the ideal $J_{G,H}$. All graphs in this paper are assumed to be finite and simple with no isolated vertices, in particular $|E(G)|<\infty$. For convenience we write $|G|: = |V(G)|$ and $e(G):= |E(G)|$. 
For a positive integer $n$, we write $[n]:=\{1,\ldots,n\}$ and Call $G$ a \emph{graph on $[n]$} if $V=[n]$, or a \emph{graph on $n$ vertices} if $|G| = n$.
Throughout this paper we consider graphs up to isomorphism, in particular it applies to the definitions below of complete graph, star, path etc.
 The \emph{complete graph} $G = K_n$ is the graph on $[n]$ such that $E(K_n) = \{\{i,j\}\mid 1\leq i<j\leq n\}$. For $n\geq 2$, the \emph{star of order $n$} is the graph $St_n$ whose edge set is $E(St_n) = \{\{1,i\}\mid 2\leq i\leq n\}$. The star of order four is also called the \emph{claw}. For $n\geq 2$, the \emph{path} graph $P_n$ is the graph on $[n]$ with $E(P_n) = \{\{i,i+1\}\mid 1\leq i\leq n-1\}$. Clearly $St_n = P_n$ if and only if 
 $n\le 3$. Given graphs $G=(V,E)$ and $G'=(V',E')$, we say that $G'$ is a \emph{subgraph} of $G$, or $G$ \emph{contains} $G'$, if $V'\subseteq V$ and $E'\subseteq E$, in which case we write $G'\subseteq G$. A subgraph $G'\subseteq G$ is called an \emph{induced subgraph} of $G$ if $E(G') = E(K_{V(G')})\cap E(G)$, i.e. $G'$ has every edge in $G$ on the vertices of $G'$. For $n\geq 3$, the \emph{cycle} $G = C_n$ is the graph on $[n]$ with $E(G) = \{\{1,2\},\ldots,\{n-1,n\},\{1,n\}\}$. We say that a graph $G$ has an \emph{induced cycle} if $C_k$ is an induced subgraph of $G$ for some $k\geq 3$. A \emph{tree} is a connected graph with no induced cycles. A \emph{unicycle} graph is a connected graph $G$ with exactly one induced cycle. 
 A maximal connected subgraph of $G$ (in the sense of adding new edges and vertices) is called a (\emph{connected}) \emph{component} of $G$. $G$ is a \emph{forest} if every component of $G$ is a tree. Given a graph $G$, we say that a graph $G'$ is obtained from $G$ by \emph{adding a leaf to $G$} if $V(G') = V(G)\sqcup \{v\}$ and $E(G') = E(G)\cup \{\{a,v\}\}$ for some $a\in V(G)$. Given a graph $G$, we say that a graph $G'$ is obtained by \emph{adding a handle to $G$} if $V(G') = V(G)\sqcup \{v\}$ and there are vertices $a,b\in V(G)$ such that $E(G') = E(G)\sqcup \{\{a,v\},\{b,v\}\}$. We say that a graph $G'$ is obtained by \emph{adding a generalized handle to $G$} if for some $s\geq 2$ and $a,b\in V(G)$, we have $V(G') = V(G)\sqcup \{v_1,\ldots,v_s\}$ and $E(G') = E(G) \sqcup \{\{a,v_1\},\{v_1,v_2\},\ldots,\{v_{s-1},v_s\},\{v_s,b\}\}$.

The following small graphs will be of interest during various parts of this paper. 

\begin{figure}[htbp]
    \centering
    \caption{Some small graphs}\label{fig:small_graphs}
    \begin{subfigure}[b]{0.23\textwidth}
        \centering
        \begin{tikzpicture}[scale=0.8, every node/.style={circle, draw, minimum size=0.5cm, inner sep=1pt}]
\node(n1) at (140:2.2){1};
\node(n2) at (220:2.2){2};
\node(n3) at (0,0){3};
\node(n4) at (180:3.3){4};
\draw[black, thick](n1)--(n2);
\draw[black, thick](n1)--(n3);
\draw[black, thick](n2)--(n3);
\draw[black, thick](n1)--(n4);
\draw[black, thick](n2)--(n4);
        \end{tikzpicture}
        \caption*{A diamond}
        \label{fig:graph1}
    \end{subfigure}
    \hfill
    \begin{subfigure}[b]{0.23\textwidth}
        \centering
        \begin{tikzpicture}[scale=0.8, every node/.style={circle, draw, minimum size=0.5cm, inner sep=1pt}]
\node(n1) at (140:2){1};
\node(n2) at (220:2){2};
\node(n3) at (0,0){3};
\node(n4) at (40:2){4};
\node(n5) at (320:2){5};
\draw[black, thick](n1)--(n2);
\draw[black, thick](n1)--(n3);
\draw[black, thick](n2)--(n3);
\draw[black, thick](n3)--(n4);
\draw[black, thick](n3)--(n5);
        \end{tikzpicture}
        \caption*{A cricket}
        \label{fig:graph2}
    \end{subfigure}
    \hfill
    \begin{subfigure}[b]{0.23\textwidth}
        \centering
        \begin{tikzpicture}[scale=0.8, every node/.style={circle, draw, minimum size=0.5cm, inner sep=1pt}]
\node(n1) at (180:3){1};
\node(n2) at (220:2){2};
\node(n3) at (140:2){3};
\node(n4) at (0,1.3){4};
\node(n5) at (0,-1.3){5};
\draw[black, thick](n1)--(n2);
\draw[black, thick](n1)--(n3);
\draw[black, thick](n2)--(n3);
\draw[black, thick](n3)--(n4);
\draw[black, thick](n2)--(n5);
\draw[black, thick](n4)--(n5);
        \end{tikzpicture}
        \caption*{A house}
        \label{fig:graph3}
    \end{subfigure}
    \hfill
    \label{fig:four_graphs}
\end{figure}

\subsection{Binomial Edge Ideals}\label{sec-2.2}
Now we define the central objects to the study of this paper, the binomial edge ideal of a pair of graphs. 
An extensive treatment of some of their basic properties,  such as Cohen--Macaulayness and primary decomposition, can be found in the textbook \cite[Chapter 7]{HH-binom}.

Throughout the rest of this paper, we fix a field $F$. We assume when discussing binomial edge ideals $J_G$ that this field has characteristic zero or larger than $2^{|G|}$, and we assume when discussing binomial edge ideals $J_{G,H}$ of pairs of graphs $(G,H)$, that this field has characteristic zero or larger than $2^{|G||H|}$, with two exceptions: in the proof of Theorem \ref{thmA} (1) in Section \ref{sec5}, and in the proof of Theorem \ref{thmA} (2), we allow $F$ to be any field. We put these assumptions on the characteristic of $F$ in order to compute the analytic spread of ideals in a polynomial ring over $F$ using the theory of linear matroids, via~\cite[Theorem 8]{BMS}. We make use of this assumption at multiple critical points later in this section. Possibly these restrictions on the characteristic can be removed.

\begin{definition}[see \cite{Binom_gen}]\label{df1}
Let $m,n\geq 2$ be positive integers and consider the polynomial ring in $nm$ variables over $F$, 
    $S := F[x_{i,j}\mid 1\leq i\leq n,1\leq j\leq m]$.
We denote the quotient field of $S$ by $K = \qf(S) = F(x_{i,j}\mid 1\leq i\leq n,1\leq j\leq m)$.
    Given two graphs $G$ and $H$ on $m$ and $n$ vertices respectively, 
    the \emph{binomial edge ideal of the pair $(G,H)$} is the ideal
\begin{equation*}
    J_{G,H}:= (x_{i,k}x_{j,l}-x_{i,l}x_{j,k}\mid 1\leq i<j\leq n, 1\leq k<l\leq m, \{i,j\}\in E(G), \{k,l\}\in E(H))\subset S.
\end{equation*}

\end{definition}

\begin{remark}[{\cite[p.2]{Binom_gen}}]\label{rem:H=K_2}
    If $H = K_2$, and $G$ is a graph on $[n]$ then $J_{G,H}$ is the binomial edge ideal of $G$, which is defined as the ideal
    \begin{equation*}
        J_G = (x_iy_j-x_jy_i\mid \{i,j\}\in E(G))
        \subset F[x_1,\ldots,x_n,y_1,\ldots,y_n].
    \end{equation*}
    In particular, we have $\ell(J_{K_2,K_m}) = 2m-3$ for any integer $m\geq 3$ by \cite[Theorem 4.1]{LHVDRDOC}.
\end{remark}

\subsection{Matroids}\label{subsec:matroids}
Now we recall some elementary theory of matroids from \cite{Oxley} and \cite{kalai-symmetric_matroids} which will also be used in the proofs of Sections \ref{sec3}, and \ref{sec4}.

\begin{definition}[{c.f. \cite[p.7]{Oxley}}]
A \emph{matroid} is a pair $\mathcal{M} = (E, \mathcal{I})$ consisting of a finite set $E$, together with a family $\mathcal{I}$ of subsets of $E$ such that $\emptyset \in \mathcal{I}$, for every $I\in \mathcal{I}$ 
and $I'\subset I$, we have $I'\in \mathcal{I}$, and for $I_1,I_2\in \mathcal{I}$ with $|I_1|<|I_2|$, there exists an element $e\in I_2$ such that $I_1\cup \{e\}\in \mathcal{I}$. The elements of $\mathcal{I}$ are called \emph{independent sets}. A \emph{basis} is a maximal independent set under inclusion.  All bases of a matroid $\mathcal{M}$ have the same cardinality, and this cardinality is called the rank of $\mathcal{M}$, which is denoted by $\rk( \mathcal{M} )$.
Given two matroids 
$\mathcal{M}_1 = (E_1, \mathcal{I}_1)$ and $\mathcal{M}_2 = (E_2,\mathcal{I}_2$), a \emph{matroid isomorphism} $\Phi:\mathcal{M}_1\to \mathcal{M}_2$ is a bijection $\phi: E_1\to E_2$ such that 
$A\in \mathcal{I}_1$ if and only if $\phi(A)\in \mathcal{I}_2$.
\end{definition}

Whitney asked which matroids are isomorphic to \emph{linear} matroids in \cite[p.509]{Whitney-independence}, which we define now.

\begin{definition}
    A matroid $\mathcal{M} = ([n],\mathcal{I})$ is \emph{linear} (or \emph{representable}) if there exist
    a field $K$ and
    a matrix $A$ over $K$ with rows $A_1,\ldots,A_n$ such that $S\subset [n]$ is in $\mathcal{I}$ if and only if  $\{A_i\mid  i\in S\}$ is linearly independent (over $K$).
\end{definition}

\begin{definition}\label{df-algmat}
    Let $F\subset L$ be a finitely generated field extension and let $E\subset L$ be a finite set such that $L = F(E)$. Let $N = |E|$ and write $E = \{g_1,\ldots,g_N\}$. Let $\mathcal{I}$ be the set of subsets $S$ of $[N]$ such that $\{g_i\mid i\in S\}$ is algebraically independent over $F$. Then the pair $([N],\mathcal{I})$ forms a matroid (see for instance \cite[p.212, Theorem 6.7.1]{Oxley}), which we will denote by $\mathcal{M}_{E,F}$. We call $\mathcal{M}_{E,F}$ the \emph{algebraic matroid} associated to the field extension $F\subset F(E)$. A matroid $\mathcal{M}$ is called \emph{algebraic} if it is isomorphic to the matroid $\mathcal{M}_{E,F}$ of some finitely generated field extension $F\subset F(E)$.
\end{definition}

The following remark is a well known consequence of elementary field theory, showing that taking powers of generators in a field extension 
preserves the 
algebraic matroid.
\begin{remark}\label{rmk-powers_matroid}
    In the setting of Definition \ref{df-algmat}, take integers $i_1,\ldots,i_N\geq 1$ and denote $E' = \{g_1^{i_1},\ldots,g_N^{i_n}\}$. Then a set $\{g_j\mid j\in S\subset [N]\}\subset E$ is algebraically independent (over $F$) if and only if
    $\{g_j^{i_j}\mid j\in S\}\subset E'$ is algebraically independent. In particular the matroids $\mathcal{M}_{E,F}$ and $\mathcal{M}_{E',F}$ coincide.
\end{remark}

It is well-known that a linear matroid is algebraic, see for instance \cite[p.166, Proposition]{Ingleton}. The converse holds over characteristic zero, a result that goes back to Jacobi,  but is false in general, as shown by Ingleton~\cite[p.166, Example 15]{Ingleton} which exhibits a non representable algebraic matroid over $\mathbb{Z}/2\mathbb{Z}$.
The example of \cite{Ingleton} involves an algebraic matroid generated by homogeneous polynomials of different degrees but an equigenerated example follows immediately due to Remark \ref{rmk-powers_matroid}. 
We use a more recent result by Beecken, Mittmann and Saxena~\cite[Theorem 8]{BMS} which provides sufficient conditions on the field characteristic to imply that an algebraic matroid is also linear (over the same extension field). 
It will be used in proving Theorem \ref{thm-hyp} and Proposition \ref{propB}. We state some notation here to give a self contained description of their result which we will apply in Section \ref{sec3}.

First we fix some notation for the rest of this section. Let $m$ and $N$ be positive integers such that $m,N\geq 2$. Let $S = F[t_1,\ldots,t_m]$ be a polynomial ring, over a field $F$ in $m$ variables, and let $K$ be the quotient field of $S$, i.e. $K = F(t_1,\ldots,t_m)$. Let $E = \{g_1,\ldots,g_N\}$ be a finite subset of $K$. 
Following Definition \ref{df-algmat}, consider the algebraic matroid 
$\mathcal{M}_{E,F}$. Consider also the Jacobian matrix
\begin{equation*}
    \mathcal{J}: = (\partial g_i /\partial t_j)_{1\leq i\leq N\text{, }1\leq j\leq m}\in K^{N\times m}
\end{equation*}with rows $J_i := (\partial g_i /\partial t_1,\ldots, \partial g_i /\partial t_m)^t\in K^m$, $1\leq i\leq N$. Let $\mathcal{M}_{\mathcal{J}}$ be the linear matroid associated to the matrix $\mathcal{J}$. We will denote the derivative $\partial/\partial t_j$ by $\partial_j$ for $1\leq j\leq m$.

Recall that 
$\mathcal{M}_{E,F}$ is a matroid with ground set $[N]$ and a subset $X\subset [N]$ is independent if and only if the set $g_X:=\{g_i\mid i\in X\}$ is algebraically independent over $F$. Similarly, $\mathcal{M}_{\mathcal{J}}$ is a matroid with ground set $[N]$, where $X\subset [N]$ is independent if and only if $\mathcal{J}_X:= \{J_i\mid i\in X\}$ is linearly independent over $K$.

In \cite[Theorem 8]{BMS} they show that any algebraic matroid over a field of characteristic sufficiently large or zero is representable over the same field, which we paraphrase as follows.

\begin{lemma}[{c.f. \cite[Theorem 8]{BMS}, see also \cite[Proposition 2.4]{Ehrenborg-Rota}}]\label{lem2}
If $\text{char}(F)$ equals zero or is larger than $\max\{\deg(g_1),\ldots,\deg(g_N)\}^{\trdeg_F(F(g_1,\ldots,g_N))}$, then the matroids $\mathcal{M}_{E,F}$ and $\mathcal{M}_{\mathcal{J}}$ are isomorphic.
\end{lemma}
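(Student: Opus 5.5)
The plan is to fix a subset $X\subset[N]$ and show directly that $g_X$ is algebraically independent over $F$ if and only if $\mathcal{J}_X$ is linearly independent over $K$. Throughout I write $d=\max_i\deg(g_i)$ and $r=\trdeg_F F(g_1,\ldots,g_N)$, and use the chain rule for the partial derivatives $\partial_j$ on $K=F(t_1,\ldots,t_m)$. Since both matroids are defined by their independent sets, this equivalence gives the isomorphism with $\phi$ the identity on $[N]$.

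First, linear independence of $\mathcal{J}_X$ implies algebraic independence of $g_X$; I prove the contrapositive.
\begin{itemize}
\item Suppose $g_X$ is algebraically dependent. Choose a nonzero $P\in F[y_i\mid i\in X]$ with $P(g_X)=0$ and of minimal total degree.
\item Applying $\partial_j$ and the chain rule gives
\begin{equation*}
\sum_{i\in X}\frac{\partial P}{\partial y_i}(g_X)\,\partial_j g_i=0 \quad\text{for every } j,
\end{equation*}
that is, $\sum_{i\in X}\frac{\partial P}{\partial y_i}(g_X)J_i=0$.
\item It remains to show some coefficient $\frac{\partial P}{\partial y_i}(g_X)$ is nonzero. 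Each $\partial P/\partial y_i$ has smaller degree than $P$, so by minimality it is either the zero polynomial or does not vanish at $g_X$.
\item In characteristic zero, $P$ is nonconstant, so some $\partial P/\partial y_i$ is a nonzero polynomial, hence nonzero at $g_X$.
\item In characteristic $p$, every $\partial P/\partial y_i$ vanishes identically only if $P$ is a polynomial in the $y_i^p$. To exclude this I would invoke Perron's effective bound. We may shrink $X$ to a circuit, so that $g_X$ consists of at most $r+1$ polynomials spanning transcendence degree $|X|-1\le r$. Perron's bound then gives an annihilating polynomial of degree at most $d^{|X|-1}\le d^r<p$. The minimal $P$ therefore has degree $<p$ and cannot be a $p$-th power in any variable.
\end{itemize}
This yields a nontrivial linear relation among the $J_i$, $i\in X$.

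Next, algebraic independence of $g_X$ implies linear independence of $\mathcal{J}_X$.
\begin{itemize}
\item Extend $g_X$ by variables $t_{j}$, $j\in T$, to a transcendence basis $B$ of $K/F$.
\item Let $L=F(B)$, a purely transcendental field with the elements of $B$ as independent variables. On $L$ there are derivations $D_i$ ($i\in X$) over $F$ with $D_i(g_k)=\delta_{ik}$ for $k\in X$ and $D_i(t_j)=0$ for $j\in T$.
\item If $K/L$ is separable, each $D_i$ extends uniquely to a derivation of $K$. Any $F$-derivation of $K$ has the form $D=\sum_j D(t_j)\partial_j$, so
\begin{equation*}
D_i(g_k)=\langle J_k,(D_i(t_1),\ldots,D_i(t_m))\rangle.
\end{equation*}
The matrix $\bigl(D_i(g_k)\bigr)_{i,k\in X}$ is the identity, which forces $\mathcal{J}_X$ to have full rank.
\item Separability of $K/L$ follows from the degree bound. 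Each $t_k$ is algebraic over $L$, and again by Perron's bound its minimal polynomial has degree at most a product of degrees bounded by $d^r<p$. Hence it is separable, and in characteristic zero this is automatic.
\end{itemize}

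The main obstacle is making the Perron-type degree bound fit exactly the hypothesis $d^{r}$ in both directions. For the separability of $t_k$ over $F(g_X,t_T)$, the generators involved have degrees $\le d$ and the $t$'s have degree $1$, and I expect the bound to come out as a product of degrees over a transcendence basis. I would first verify this bookkeeping carefully, following \cite{BMS}, reducing wherever possible to the case where $X$ is a basis of $\mathcal{M}_{E,F}$ so that only $r$ of the $g_i$ contribute degree $d$.
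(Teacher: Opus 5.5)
Your argument is correct and is essentially the proof of the result the paper only cites, \cite[Lemma 9 and Theorem 8]{BMS}: derivations, with separability of $K$ over $F(g_X,t_T)$ forced by a Perron-type degree bound. The first direction needs no characteristic hypothesis; it is the paper's Lemma~\ref{lem-ineq-any_characteristic}, so Perron is not needed there. The bookkeeping you flag in the second direction does close: Perron's weighted theorem for the $m+1$ polynomials $g_X,t_T,t_k$ in $m$ variables gives an annihilator whose degree in the $t_k$-variable is at most $\prod_{i\in X}\deg g_i\le d^{|X|}\le d^r<p$ (equivalently $[K:F(g_X,t_T)]\le d^{|X|}$), whereas the unweighted bound $d^m$ would not suffice.
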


Here is 
another relationship between algebraic and linear matroids, also given in \cite{BMS}, which is valid over a field of any characteristic, and will be important for proving Theorem \ref{thmA}.

\begin{lemma}[{see \cite[Lemma 9]{BMS}}]\label{lem-ineq-any_characteristic}
    Let $F$ be any field, let $S = F[t_1,\ldots,t_m]$ be a polynomial ring over $F$, and let $g_1,...,g_N\in S$ be any polynomials. Then $\trdeg_F F(g_1,\ldots,g_N)$ is at least the rank of the Jacobian $\mathcal{J}=(\partial g_i/\partial t_j)_{1\leq i\leq N,1\leq j\leq m}$ of $g_1,\ldots,g_N$. 
    
    In particular, if a set of rows
    $\mathcal{J}_S$
    of 
    the Jacobian is linearly independent over $F(g_1,\ldots,g_N)$, then the corresponding set of polynomials $\{g_i\mid i\in S\}$ is algebraically independent over $F$.
\end{lemma}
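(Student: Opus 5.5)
We prove Lemma~\ref{lem-ineq-any_characteristic} by reducing the second assertion to the first and then arguing via derivations of the field $L:=F(g_1,\ldots,g_N)$. We work in $K=F(t_1,\ldots,t_m)$. Each partial derivative $\partial_j$ is an $F$-derivation of $K$, and rank is unchanged by field extension, so linear independence of rows over $L$ and over $K$ agree.

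For the reduction, suppose $\mathcal{J}_S$ is linearly independent. Apply the first assertion to the subfamily $\{g_i\mid i\in S\}$; its Jacobian is exactly $\mathcal{J}_S$, of rank $|S|$. Then $\trdeg_F F(g_i\mid i\in S)\ge |S|$. Since a field generated by $|S|$ elements has transcendence degree at most $|S|$, equality holds, so these $|S|$ generators are algebraically independent.

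For the first assertion, let $r=\trdeg_F L$ and suppose the Jacobian had rank $>r$. After reindexing, rows $J_1,\ldots,J_{r+1}$ are linearly independent over $K$. Choose a transcendence basis of $L/F$ from among the $g_i$. Then $g_1,\ldots,g_{r+1}$ are algebraically dependent, so $g_1,\ldots,g_{r+1}$ together with the basis give $r+1$ algebraically dependent elements of $L$. The key step is the following claim: \emph{if $h_1,\ldots,h_k\in K$ are algebraically dependent over $F$, then the gradients $\nabla h_1,\ldots,\nabla h_k$ are linearly dependent over $K$.} Granting the claim for $g_1,\ldots,g_{r+1}$ contradicts the independence of $J_1,\ldots,J_{r+1}$. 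To prove the claim, take a nonzero $P\in F[Y_1,\ldots,Y_k]$ of minimal total degree with $P(h)=0$. The chain rule gives
\[
\sum_i (\partial P/\partial Y_i)(h)\,\nabla h_i=0,
\]
so it suffices that some $(\partial P/\partial Y_i)(h)$ is nonzero. By minimality of degree, this holds unless every $\partial P/\partial Y_i$ is the zero polynomial.

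This is the main obstacle, because in characteristic $p$ the polynomial $P$ may lie in $F[Y_1^p,\ldots,Y_k^p]$, so that all $\partial P/\partial Y_i$ vanish. Indeed the claim as stated is false in characteristic $p$: $h_1=t_1^p$ alone is algebraically independent yet has zero gradient. The claim is therefore the wrong direction for the lemma, and the proof needs the converse: \emph{independence of the Jacobian rows implies algebraic independence.} To get this, argue by contrapositive. Suppose $h_1,\ldots,h_k$ are algebraically dependent. Let $k'<k$ be the transcendence degree of $F(h_1,\ldots,h_k)$. Every $F$-derivation $D$ of $K$ restricts to $F(h)$ and kills $F$. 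The space $\mathrm{span}_K\{\nabla h_i\}$ equals the image of the $K$-linear map $D\mapsto (D h_i)_i$, restricted to $\mathrm{Der}_F(K)$, which is spanned by the $\partial_j$. The rank of this map is at most $\dim_{F(h)}\Omega_{F(h)/F}$. It therefore remains to bound $\dim_{F(h)}\Omega_{F(h)/F}<k$.

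Here the subtlety is $\dim\Omega$, which can exceed the transcendence degree in positive characteristic. I would avoid that by a direct polynomial argument instead. Choose the dependence $P$ of minimal degree among those not lying in $F[Y^p]$, if one exists; then some $\partial P/\partial Y_i\ne0$ has lower degree and nonvanishing value, and we are done. The remaining case, where every relation lies in $F[Y^p]$, is the genuinely hard one. There I would fall back on the intended source and cite \cite[Lemma 9]{BMS} directly for the inequality $\trdeg\ge\rk\mathcal{J}$. Their argument passes to a separating setup via $p$-th roots and Frobenius, showing that the gradients of a $p$-power relation's underlying elements still satisfy a linear dependence.
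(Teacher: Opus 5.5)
The paper gives no argument of its own for this lemma; it simply cites \cite[Lemma 9]{BMS}. Your proposal also ends by citing that lemma, and it does so for the only case that needs work. As a proof it is therefore circular exactly there, and the reasoning before that point contains two errors.

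First, your key claim is exactly what is needed: if $h_1,\ldots,h_k$ are algebraically dependent over $F$, then $\nabla h_1,\ldots,\nabla h_k$ are $K$-linearly dependent. This is the contrapositive of the ``in particular'' statement, and it immediately gives $\rk\mathcal{J}\le\trdeg_F L$. It is true in every characteristic. The example $h_1=t_1^p$ refutes only the \emph{converse}: a vanishing gradient does not force algebraic dependence. So calling the claim false and ``the wrong direction'' is a mistake. What you then say is needed, ``independent rows imply algebraic independence'', is the claim's contrapositive, not its converse.

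Second, the case split fails. If $P$ is minimal only among relations outside $F[Y_1^p,\ldots,Y_k^p]$, a lower-degree partial derivative can still be a relation lying inside that subring. For instance, if $R\in F[Y_1^p,\ldots,Y_k^p]$ is a relation, then $P=Y_1R$ is a relation outside the subring. Yet $\partial P/\partial Y_1=R$ and $\partial P/\partial Y_j=0$ for $j\ne 1$, so every partial vanishes at $h$. Conversely, your ``remaining case'', where every relation lies in $F[Y_1^p,\ldots,Y_k^p]$, never occurs, since the relations form an ideal.

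In characteristic $0$ your original minimal-degree argument is complete. In characteristic $p$, what must be shown is that a nonzero relation $P$ of minimal total degree \emph{overall} does not lie in $F[Y_1^p,\ldots,Y_k^p]$. Over a perfect field this is immediate ($P=Q^p$ with $Q(h)=0$), but here $F$ is arbitrary.

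The missing ingredient is the separability of $K=F(t_1,\ldots,t_m)$ over $F$. Concretely, $F$ and $K^p=F^p(t_1^p,\ldots,t_m^p)$ are linearly disjoint over $F^p$; to check this, clear denominators and compare coefficients of the monomials $t^{p\gamma}$. Now write $P=\sum_\alpha c_\alpha Y^{p\alpha}$.
\begin{itemize}
\item Choose an $F^p$-basis $e_1,\ldots,e_s$ of the $F^p$-span of the $c_\alpha$, and write $c_\alpha=\sum_\beta b_{\alpha\beta}^p e_\beta$ with $b_{\alpha\beta}\in F$.
\item Then $0=P(h)=\sum_\beta e_\beta\,Q_\beta(h)^p$, where $Q_\beta=\sum_\alpha b_{\alpha\beta}Y^\alpha$.
\item Since $Q_\beta(h)^p\in K^p$, linear disjointness gives $Q_\beta(h)=0$ for all $\beta$.
\item Some $Q_\beta\neq 0$, and it has degree at most $\deg P/p<\deg P$, contradicting minimality.
\end{itemize}
Hence some $\partial P/\partial Y_i$ is a nonzero polynomial of smaller degree, so it does not vanish at $h$, and your chain-rule identity is a nontrivial dependence.

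Equivalently, this closes your $\Omega$ approach exactly where you abandoned it. $L$ is a finitely generated subextension of the separable extension $K/F$, hence separably generated, so $\dim_L\Omega_{L/F}=\trdeg_F L$.

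Finally, your assertion that linear independence over $L$ and over $K$ ``agree'' is unjustified, since the entries of $\mathcal{J}$ need not lie in $L$. However, the chain-rule coefficients $(\partial P/\partial Y_i)(h)$ lie in $F[h]\subseteq L$, so the corrected argument proves the ``in particular'' statement over $L$ as written.
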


\subsection{Hyperconnectivity} 
We review the hyperconnectivity matroid, which is a linear matroid that was introduced by Kalai in \cite{Kalai}. We first recall the definition of hyperconnectivity matrices.

\begin{definition}\label{def:hypconn}
Let $n$ and $d$ be positive integers, let $G$ be a graph on $[n]$, let $F$ be a field, and let $S = F[x_{1,1},\ldots,x_{d,n}]$ be a polynomial ring in $dn$ variables. For $1\leq i<j\leq n$ such that $\{i,j\}\in E(G)$, we define the vector
\begin{equation*}
    H_{i,j} = (0\ldots 0\text{ } x_{1,j}\text{ }x_{2,j}\ldots x_{d,j} \text{ }0\ldots 0\text{ } -x_{1,i}\text{ }-x_{2,i}\ldots -x_{d,i} \text{ }0\ldots 0)
    \in S^{nd}
\end{equation*}
where for $1\leq p\leq d$, $x_{p,j}$ is in the $(d(i-1)+p)$-th position, and $x_{p,i}$ is in the $(d(j-1)+p)$-th position. Consider the lexicographic ordering on the pairs $(i,j)\in \mathbb{Z}^2$, and let $H_1,\ldots,H_r$ be the vectors $H_{i,j}$ such that $\{i,j\}\in E(G)$ and $i<j$, arranged in this ordering. We consider the $d$-hyperconnectivity matrix of $G$, which is the matrix
\begin{equation*}
    \mathcal{H}_d(G):= \begin{pmatrix}
        H_1\\ H_2\\ \vdots\\ H_r
    \end{pmatrix}
    \in S^{e(G)\times d|G|}
\end{equation*}
A graph $G$ with $r$ edges is \emph{independent in the $d$-hyperconnectivity matroid} (or \emph{$d-$acyclic}) if the matrix $\mathcal{H}_d(G)$ has rank $r$, i.e. $\mathcal{H}_d(G)$ has independent rows. When $G=K_n$, we denote the resulting matroid by $\mathcal{H}^n_d$. In this paper we are especially interested in the matrix $\mathcal{H}(G) := \mathcal{H}_2(G)$, since $\mathcal{H}(G)$ is the Jacobian matrix of the binomial edge ideal $J_G$. 
\end{definition}

Kalai
introduced hyperconnectivity over a field of characteristic zero, however, this matroid was recently shown to be independent of the field characteristic in  \cite[Corollary 1.13]{tyomkyn-char_p_hyp}.

For $d=1$, a combinatorial characterization of  independence in $\mathcal{H}^n_d$ is well known:

\begin{remark}(c.f. \cite{Kalai})\label{rmk-1hc}
The $1$-hyperconnectivity matroid coincides with the graphic matroid. Thus
    a graph $G$ of order $n$ is independent in $\mathcal{H}^n_1$
    if and only if $G$ is a forest.
\end{remark}

The following theorem of Bernstein gives a combinatorial characterization of when a graph is independent in the $2$-hyperconnectivity matroid.

\begin{theorem}[{\cite[Theorems 4.2 and 4.4]{Bernstein}}]
Let $n\geq 2$ and let $G$ be a graph on $[n]$. $G$ is independent in the $2$-hyperconnectivity matroid if and only if there exists some acyclic orientation of G that has no alternating closed trail.
\end{theorem}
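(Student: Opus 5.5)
The plan is to follow Bernstein's tropical approach rather than manipulate the matrix $\mathcal{H}_2(G)$ directly. The first step reduces the question to algebraic independence. The rows of $\mathcal{H}_2(G)$ are the gradients of the $2$-minors $f_{i,j}=x_{1,i}x_{2,j}-x_{1,j}x_{2,i}$, $\{i,j\}\in E(G)$. By Lemma~\ref{lem2}, in characteristic zero, $G$ is independent in $\mathcal{H}_2^n$ if and only if the Plücker coordinates $\{p_{ij}\}_{ij\in E(G)}$ are algebraically independent on the affine cone over $\mathrm{Gr}(2,n)$. Since the hyperconnectivity matroid does not depend on the characteristic \cite[Corollary 1.13]{tyomkyn-char_p_hyp}, we may work over $\mathbb{C}$ (or over a valued field such as Puiseux series). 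The independence condition is then equivalent to the coordinate projection $\pi_G:\mathrm{Gr}(2,n)\to \mathbb{A}^{E(G)}$ being dominant.

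Next I will tropicalize. By the Bieri--Groves theorem, the dimension of the image of $\pi_G$ equals the dimension of the image of the tropical Grassmannian $\mathrm{trop}\,\mathrm{Gr}(2,n)\subset \mathbb{R}^{\binom{n}{2}}$ under the coordinate projection to $\mathbb{R}^{E(G)}$. By Speyer--Sturmfels, $\mathrm{trop}\,\mathrm{Gr}(2,n)$ is the space of phylogenetic trees. Its maximal cones are indexed by trivalent trees $T$ with leaf set $[n]$. Each cone is parametrized by $d_{ij}=a_i+a_j+\sum_{e\in \mathrm{path}_T(i,j)}\lambda_e$, where the $a_i$ are lineality parameters and the $\lambda_e$ are lengths of internal edges; the pendant lengths are absorbed into the $a_i$. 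Hence $G$ is independent if and only if, for some trivalent tree $T$, the linear map $(a,\lambda)\mapsto (d_{ij})_{ij\in E(G)}$ is surjective. Equivalently, the row vectors $r_{ij}=e_i+e_j+\sum_{e\in \mathrm{path}_T(i,j)} e_e$ are linearly independent.

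The final and main step is to translate this linear condition into the combinatorial one. For the direction from the orientation to independence, I would take an acyclic orientation with no alternating closed trail and extend it to a linear order on $[n]$. I would then build a caterpillar tree $T$ following that order, possibly after a generic rerooting. Then I would argue by an upper-triangular elimination: orient each edge $ij$ toward the later vertex, and use the internal edge of $T$ just before the later vertex as a pivot. The absence of alternating closed trails should be exactly the condition that lets the pivots be chosen consistently. For the converse, given a tree $T$ making the rows independent, I would root $T$ and orient each edge $ij$ toward the leaf whose pendant side is "deeper" in the root order. A dependency among the rows $r_{ij}$ with signs $\pm1$ is then forced precisely by a closed trail whose edges alternate in orientation. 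This is because the path-sums telescope along such a trail with alternating signs, and the $a$-coordinates cancel at every vertex when each vertex meets the trail once in each direction. I expect this combinatorial translation, and in particular showing that every dependency can be reduced to an alternating closed trail, to be the main obstacle. I would first verify the correspondence on small cases such as $K_4$ and $C_4$, where the even-cycle dependencies should appear, before attempting a general inductive argument on $n$ by deleting a leaf of $T$.
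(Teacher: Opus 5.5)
The paper gives no proof of this statement; it quotes Bernstein, whose argument is also tropical. Your first two steps are a sound version of that reduction: the Jacobian criterion, then tropicalization plus Speyer--Sturmfels, reducing independence of $G$ to surjectivity of $(a,\lambda)\mapsto(d_{ij})_{ij\in E(G)}$ on the span of the cone of some trivalent tree $T$.

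\textbf{The gap.} It lies in the converse half of your last step, and the proposed mechanism is false, not merely unverified. A rule that orients edges using only $T$ and a root must be a linear order $\prec$ of the leaves if it is to be acyclic on $K_n$. The internal-edge coordinates telescope along $\prec$-alternating closed trails only for splits of $T$ that are initial or final segments of $\prec$.

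Suppose some internal split $X|\overline{X}$ is not of this form. Since $|X|,|\overline{X}|\ge 2$, there are $x,x'\in X$ and $y,y'\notin X$ with $x,y\prec x',y'$. The $4$-cycle $y\,x'\,x\,y'$ is then $\prec$-alternating. But the pendant coordinates force any dependency among its rows to be a multiple of $r_{yx'}-r_{x'x}+r_{xy'}-r_{y'y}$. That combination has coefficient $2$ on the edge of $T$ inducing $X|\overline{X}$, so the $4$-cycle is $T$-independent. For example, take the snowflake tree with cherries $\{1,2\},\{3,4\},\{5,6\}$, the order $1\prec\cdots\prec 6$, and the cycle $1\,4\,3\,5$.

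Every non-caterpillar $T$ has such a split for every $\prec$. So for non-caterpillar trees no $T$-determined orientation can work; the orientation must depend on $G$.

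\textbf{A repair.} First pass from $T$ to a caterpillar, making choices that depend on $G$. Let $\delta_{P|Q}$ denote the split metric, equal to $1$ on pairs separated by $P|Q$. Suppose $\tau$ is obtained by contracting an internal edge of $T$ with sides $A\cup B|C\cup D$. The three resolutions satisfy
$\delta_{AB|CD}+\delta_{AC|BD}+\delta_{AD|BC}=\sum_{K\in\{A,B,C,D\}}\delta_{K|\overline{K}}\in\operatorname{span}(\tau)$.
Hence if projection to $\mathbb{R}^{E(G)}$ is onto from the span of the cone of $T$, it is onto from the span of at least one other resolution.

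Fix a leaf-to-leaf path in $T$. Apply this move at an edge joining the path to a hanging subtree $S$ with at least two leaves. Both alternative resolutions replace $S$ by its two halves, hanging at consecutive path vertices. So $\sum_S(|S|-1)$ drops by one, and iterating gives a caterpillar for which $G$ is still independent.

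For the caterpillar with leaf order $\prec$, let $a_k$ be the pendant lengths and $c_k$ the spine positions. Then $d_{ij}=(a_i-c_i)+(a_j+c_j)=\alpha_i+\beta_j$ for $i\prec j$, with $\alpha,\beta$ free on the span of the cone. The rows of $G$ become incidence vectors of the bipartite graph with an edge $i^{\mathrm{out}}j^{\mathrm{in}}$ for each $ij\in E(G)$, $i\prec j$. These are independent iff this graph is a forest. It has a cycle iff the $\prec$-orientation has an alternating closed trail.

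This settles both directions. It also replaces your pivot heuristic, which cannot work as stated, since all edges entering one vertex would share the same pivot.

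A smaller point: $C_4$ is independent in $\mathcal{H}_2^4$. Even-cycle dependencies appear only relative to particular trees.
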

Here an alternating closed trail in a directed graph $D$ is a walk $w = (v_0,\ldots,v_k)$ such that each edge in $D$ appears at most once in $w$, $v_k = v_0$, and adjacent edges along $w$ have opposite orientations.


\subsection{Analytic Spread}

Now we recall some basic facts about analytic spread of ideals, which will be used throughout this paper. First we recall the definition of analytic spread. Recall that we have fixed a field $F$ in Section \ref{sec-2.2}. Further reading and unexplained definitions in commutative algebra can be found in \cite{atiyah-macdonald_com_alg_book, SH, Matsumura-com_ring_theory}.

\begin{definition}
    Let $S = F[x_1,\ldots,x_r]$ be a polynomial ring, and let $I\subset S$ be an ideal. Let $S[t]$ be a polynomial ring over $S$. The \emph{Rees algebra} of $I$ is the subring $R[It]:= \sum_{i\geq 0} I^it^i$ of $S[t]$. The quotient ring $\mathcal{F}(I):= R[It]/(x_1,\ldots,x_r)R[It]$ is called the \emph{special fiber ring} of $I$. The Krull dimension of $\mathcal{F}(I)$ is called the \emph{analytic spread} of $I$ and is denoted by $\ell(I)$. We shall emphasize the field $F$ we have fixed by saying that we are \emph{computing $\ell(I)$ over $F$}.
\end{definition}

The examples where analytic spread of $J_{G,H}$ is known are rather limited. In \cite{Binom_gen_paths} they compute the analytic spread of $J_{G,H}$ where $G$ is complete and $H$ is a path.

\begin{theorem}[{\cite[Corollary 3.6]{Binom_gen_paths}}]\label{spread_kn_path}
For $m,n\geq 2$, we have
    \begin{equation*}
    \ell(J_{K_m,P_n})=
    \begin{cases}
        n-1\textbf{ }& \text{ if }m=2\\
        mn-3\textbf{ }& \text{ if }m\geq 3.
    \end{cases}
\end{equation*}
\end{theorem}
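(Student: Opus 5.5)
The plan is to use Lemma~\ref{lem2} to replace $\ell(J_{K_m,P_n})$ by the rank of the Jacobian of the minimal binomial generators, and then compute that rank. Since $J_{K_m,P_n}$ is generated by quadrics, its special fiber ring is the subalgebra $F[f]$ generated by the minors $f$. Its Krull dimension is therefore $\trdeg_F F(f)$, and Lemma~\ref{lem2} identifies this with the Jacobian rank.

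It is convenient to regard the variables as an $m\times n$ generic matrix with columns $c_1,\dots,c_n\in F^m$. The generators are then exactly the $2$-minors of the $m\times 2$ blocks $(c_k\,|\,c_{k+1})$ for $1\le k\le n-1$.

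For $m=2$, each block contributes a single minor, so $J_{K_2,P_n}$ is isomorphic to the binomial edge ideal $J_{P_n}$ (Remark~\ref{rem:H=K_2}, up to transposing the roles of $G$ and $H$). Since $P_n$ is a forest, it is independent in $\mathcal{H}_2$. One can see this directly: order the edges along the path, and each new row $H_{k,k+1}$ introduces nonzero entries in the coordinates of vertex $k+1$, which appear in no earlier row. Hence the Jacobian has full row rank $n-1$, and Theorem~\ref{thm-hyp} gives $\ell=n-1$.

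For $m\ge 3$ I will prove the two inequalities separately.

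For the lower bound $\ell\ge mn-3$, I will induct on $n$. The base case $n=2$ is the ideal of all $2$-minors of a generic $m\times 2$ matrix. Its fiber ring is the Plücker coordinate ring of the affine cone over $\mathrm{Gr}(2,m)$, which has dimension $2(m-2)+1=2m-3$; this matches Remark~\ref{rem:H=K_2}. For the inductive step, adding the column $c_{n+1}$ adds $m$ new variables together with the minors of $(c_n\,|\,c_{n+1})$. Restricted to the new variables, the Jacobian rows of these minors form the Jacobian of the linear map $c_{n+1}\mapsto c_n\wedge c_{n+1}$. This map has rank $m-1$, with kernel spanned by $c_n$. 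So I need one more independent direction. I plan to obtain it by evaluating the Jacobian at an explicit specialization (for instance with $c_k$ running through standard basis vectors and their sums), where the block structure becomes triangular. There I will check that the new rows raise the rank by exactly $m$.

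For the upper bound $\ell\le mn-3$, I will exhibit a $3$-dimensional space of derivations $D=\sum_k v_k\cdot\partial/\partial c_k$ with $K$-coefficients that annihilate every generator. Equivalently, I need a $3$-dimensional right kernel of the Jacobian, which bounds its rank by $mn-3$. The condition on block $k$ is $v_k\wedge c_{k+1}+c_k\wedge v_{k+1}=0$. Generically, with $m\ge 3$ and $c_k,c_{k+1}$ independent, this forces $v_k,v_{k+1}\in\mathrm{span}(c_k,c_{k+1})$ and imposes a trace-zero condition. Concretely, $(v_k,v_{k+1})=(c_k,c_{k+1})A_k$ with $A_k\in\mathfrak{sl}_2$, which is the infinitesimal $SL_2$ symmetry of each block. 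Compatibility between consecutive blocks requires $v_{k+1}$ to be expressed consistently in both $\mathrm{span}(c_k,c_{k+1})$ and $\mathrm{span}(c_{k+1},c_{k+2})$. Generically these planes meet only in the line $Kc_{k+1}$, so $v_{k+1}$ must be a multiple of $c_{k+1}$. I then need to solve this recurrence.

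Working out this recurrence is the step I expect to be the main obstacle, because a naive count suggests the solution space collapses to the $1$-dimensional alternating torus $c_k\mapsto(-1)^k c_k$. If that happens, the missing directions must come from the endpoint columns: $c_1$ and $c_n$ each lie in only one block, so $v_1$ and $v_n$ have more freedom. My first attempt will therefore be to combine the alternating scaling with the two endpoint shears $c_1\mapsto c_1+t\,c_2$ and $c_n\mapsto c_n+t\,c_{n-1}$, and to verify that these three derivations kill all generators and are independent. If this count turns out to be off, I will instead compute the kernel dimension directly at the specialization used for the lower bound; combined with lower semicontinuity of rank, that also yields the upper bound.
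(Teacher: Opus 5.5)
The paper does not prove this statement; it is quoted from \cite{Binom_gen_paths}. Your proposal therefore has to stand on its own, and as written it does not prove the lower bound $\ell(J_{K_m,P_n})\ge mn-3$ for $m\ge 3$.

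In the inductive step you correctly note that the new rows contribute only rank $m-1$ through the new variables. You then postpone the missing $m$-th direction to a specialization that you ``will check''. That missing direction is the whole content of the step.

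The specialization plan also fails as stated. Your induction hypothesis is about the \emph{generic} rank of the old Jacobian $\mathcal{J}_n$. You cannot combine it with new rows evaluated at a special point, because $\rk(\mathcal{J}_n)$ itself can drop there. For instance, with $m=3$ and three consecutive columns equal to $e_1,e_2,e_1+e_2$, the kernel of $\mathcal{J}_3$ has dimension $4$ rather than $3$. You would have to run the whole induction at one fixed, carefully chosen point, and you have not specified one.

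Your worry about the upper bound is unfounded. The alternating scaling and the two endpoint shears do annihilate every minor, since $c_1$ and $c_n$ each occur in only one block. They are also independent for every $n\ge 2$.

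The missing argument is already in your upper-bound paragraph, if you read it as a \emph{necessary} condition on kernel vectors rather than as a heuristic for guessing them.
\begin{itemize}
\item Fix $m\ge 3$ and generic columns, and write $K^m=\mathrm{span}(c_k,c_{k+1})\oplus W$. The component of $v_k\wedge c_{k+1}+c_k\wedge v_{k+1}$ in $\mathrm{span}(c_k,c_{k+1})\otimes W$ is $c_k\otimes r_{k+1}-c_{k+1}\otimes r_k$, where $r_k,r_{k+1}$ are the $W$-parts of $v_k,v_{k+1}$. Its vanishing forces $v_k,v_{k+1}\in\mathrm{span}(c_k,c_{k+1})$.
\item Hence $v_k\in\mathrm{span}(c_{k-1},c_k)\cap\mathrm{span}(c_k,c_{k+1})=Kc_k$ for $2\le k\le n-1$. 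This uses $m\ge 3$ again, so that three consecutive columns are independent.
\item At the ends, $v_1=\lambda_1c_1+\mu c_2$ and $v_n=\lambda_nc_n+\nu c_{n-1}$.
\item The block equations become $(\lambda_k+\lambda_{k+1})\,c_k\wedge c_{k+1}=0$. So $\lambda_k=(-1)^{k+1}\lambda_1$, with $\lambda_1,\mu,\nu$ free.
\end{itemize}
Thus the right kernel is \emph{exactly} the span of your three derivations, and $\rk(\mathcal{J}_n)=mn-3$. This gives both bounds at once, with no induction and no specialization; the case $n=2$ is the same computation.

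If you prefer to keep the induction, the same computation names your missing direction. The old endpoint shear $c_n\mapsto c_n+t\,c_{n-1}$ no longer extends to the longer path, because $c_{n-1}\notin\mathrm{span}(c_n,c_{n+1})$.

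Finally, routing everything through Lemma~\ref{lem2} only gives the result under its characteristic hypothesis. You can drop that hypothesis:
\begin{itemize}
\item For $\ell\ge mn-3$, use Lemma~\ref{lem-ineq-any_characteristic}, which holds in any characteristic.
\item For $\ell\le mn-3$, note that the generic fibre of the map sending $X$ to its adjacent-column $2$-minors contains the $3$-dimensional orbit of the group generated by the alternating torus and the two endpoint shears.
\end{itemize}
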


Observe that the analytic spread of $J_{G,H}$ for a pair of graphs $(G,H)$, is unchanged upon reversing the order of the two graphs:

\begin{lemma}\label{lem-switch}
    For any graphs $G$ and $H$, the special fiber rings of $J_{G,H}$ and $J_{H,G}$ are isomorphic. In particular, $\ell(J_{G,H}) = \ell(J_{H,G})$.
\end{lemma}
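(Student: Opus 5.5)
The plan is to exhibit an explicit ring isomorphism $S\to S'$ that carries $J_{G,H}$ onto $J_{H,G}$ and maps the graded maximal ideal of $S$ onto that of $S'$. It will then induce isomorphisms of Rees algebras and of special fiber rings, and the equality of analytic spreads follows by taking Krull dimensions.

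Let $G$ be a graph on $[n]$ and $H$ a graph on $[m]$. Then $J_{G,H}\subset S=F[x_{i,k}\mid i\in[n],k\in[m]]$, and $J_{H,G}\subset S'=F[y_{k,i}\mid k\in[m],i\in[n]]$ is generated by $y_{k,i}y_{l,j}-y_{k,j}y_{l,i}$ for $\{k,l\}\in E(H)$ and $\{i,j\}\in E(G)$.

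1. Define the $F$-algebra isomorphism $\tau\colon S\to S'$ by $x_{i,k}\mapsto y_{k,i}$. This is the transpose of the generic $n\times m$ matrix. It is a bijection on variables, hence a graded isomorphism, and it sends $(x_{i,k})$ onto $(y_{k,i})$, the homogeneous maximal ideal.

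2. Check generators. We have
\[
\tau(x_{i,k}x_{j,l}-x_{i,l}x_{j,k}) = y_{k,i}y_{l,j}-y_{l,i}y_{k,j},
\]
which is exactly the generator of $J_{H,G}$ indexed by $(\{k,l\},\{i,j\})$. Here $\{k,l\}\in E(H)$ is the row edge and $\{i,j\}\in E(G)$ is the column edge. Since the index sets $E(G)\times E(H)$ and $E(H)\times E(G)$ correspond bijectively, $\tau(J_{G,H})=J_{H,G}$.

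3. Extend $\tau$ to $S[t]\to S'[t]$ by fixing $t$. It maps $\bigoplus I^it^i$ onto $\bigoplus \tau(I)^it^i$, so $R[J_{G,H}t]\cong R[J_{H,G}t]$. The extended map sends the ideal generated by the variables in the first Rees algebra onto the corresponding ideal in the second, so passing to quotients gives $\mathcal{F}(J_{G,H})\cong\mathcal{F}(J_{H,G})$. Equal Krull dimensions then give $\ell(J_{G,H})=\ell(J_{H,G})$.

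The only point needing care is the sign and ordering convention in the generators (whether $k<l$ or $i<j$ is imposed). A transposed $2\times2$ minor equals the original minor, and at worst it differs by a sign, which does not change the ideal. There is no real obstacle beyond this bookkeeping.
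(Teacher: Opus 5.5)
Your proposal is correct and is essentially the paper's proof: the paper also uses the transposition isomorphism $x_{i,j}\mapsto x_{j,i}$, which carries $J_{G,H}$ onto $J_{H,G}$ and so induces an isomorphism of special fiber rings. Your extra bookkeeping (extending to the Rees algebras and checking that the ideal generated by the variables is preserved) spells out what the paper leaves implicit, and no sign issue actually arises, since $\tau$ sends each generator exactly to the corresponding generator of $J_{H,G}$.
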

\begin{proof}
Let $V(G)=[n]$ and $V(H)=[m]$, let $S = F[x_{i,j}\mid 1\leq i\leq n,1\leq j\leq m]$, 
$S' = F[x_{j,i}\mid 1\leq i\leq n,\leq j\leq m]$
and consider the $F$-algebras isomorphism $\sigma: S\to S'$ induced by $x_{i,j}\mapsto x_{j,i}$ for $1\leq i\leq n$ and $1\leq j\leq m$. 
Then $\sigma$ maps $J_{G,H}$ to $J_{H,G}$ and induces an isomorphism of their fiber rings.
\end{proof}

The following is shown during the proof of \cite[Proposition 4.8]{HeinzerKim} and gives the first step to express analytic spread as the rank of a linear matroid.

\begin{lemma}\label{lem0}
    Let $I\subset F[x_1,\ldots,x_t]$ an ideal which is generated by forms $E = \{g_1,\ldots g_N\}$ of a common degree. Then $\ell(I) = \trdeg_F F(g_1,\ldots,g_N)$, i.e. $\ell(I)$ is the rank of the algebraic matroid $\mathcal{M}_{E,F}$.
\end{lemma}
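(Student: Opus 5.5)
The plan is to identify the special fiber ring $\mathcal{F}(I)$ with the $F$-subalgebra $F[g_1,\ldots,g_N]\subseteq S=F[x_1,\ldots,x_t]$ generated by the $g_i$. Once this is done, the dimension of $\mathcal{F}(I)$ is the transcendence degree of its fraction field, which is $F(g_1,\ldots,g_N)$.

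\textbf{Step 1: compute the graded pieces.} Let $d$ be the common degree and let $\mathfrak m=(x_1,\ldots,x_t)$. Then $I^n$ is generated by the monomials in the $g_i$ of total degree $n$, and all of these are forms of degree $nd$. Hence $I^n$ lives in degrees $\ge nd$, while $\mathfrak m I^n$ lives in degrees $\ge nd+1$. Since $I^n$ is generated in degree $nd$, its elements of degree exactly $nd$ form the $F$-span of the degree-$n$ monomials in the $g_i$, and this span is exactly $F[g_1,\ldots,g_N]_n$. Its elements of degree $>nd$ lie in $\mathfrak m I^n$. Therefore
\[
I^n/\mathfrak m I^n\cong (I^n)_{nd}=F[g_1,\ldots,g_N]_n .
\]

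\textbf{Step 2: check multiplicativity.} The isomorphisms of Step 1 are compatible with multiplication, because multiplication in $\mathcal{F}(I)$ is induced from multiplication in $S[t]$. This gives a graded $F$-algebra isomorphism $\mathcal{F}(I)\cong F[g_1,\ldots,g_N]$, where the $g_i$ are placed in degree $1$. One could also phrase this as the surjection $F[T_1,\ldots,T_N]\to\mathcal{F}(I)$, $T_i\mapsto g_it$, together with the observation that its kernel equals the kernel of $T_i\mapsto g_i$, by the same degree argument.

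\textbf{Step 3: conclude.} The ring $F[g_1,\ldots,g_N]$ is a finitely generated domain over $F$, being a subring of $S$. For such a ring the Krull dimension equals the transcendence degree of its fraction field, here $F(g_1,\ldots,g_N)$; this is Noether normalization. So $\ell(I)=\trdeg_F F(g_1,\ldots,g_N)$. Finally, the rank of $\mathcal{M}_{E,F}$ is the size of a maximal algebraically independent subset of $E$, which is the transcendence degree because $E$ generates the extension.

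The only genuinely delicate point is Step 1: verifying that the elements of $I^n$ of degree $>nd$ really lie in $\mathfrak m I^n$, and that no degree-$nd$ element does. Both facts follow from homogeneity and from the equal-degree hypothesis. Without that hypothesis the statement fails, since the fiber ring is then not the toric-like subalgebra.
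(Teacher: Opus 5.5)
Your proof is correct: identifying $I^n/\mathfrak m I^n$ with $(I^n)_{nd}$ gives $\mathcal{F}(I)\cong F[g_1,\ldots,g_N]$, and the Krull dimension of an affine domain is the transcendence degree of its fraction field. The paper gives no argument of its own and simply cites the proof of Heinzer--Kim's Proposition 4.8, which rests on this same standard identification of the fiber cone with the subalgebra generated by the equal-degree forms.
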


\begin{remark}\label{rmk-compute} 
    Suppose that $G$ and $H$ are graphs on $m$ and $n$ vertices respectively. Then the analytic spread $\ell(J_{G,H})$, computed over a field $F$ of characteristic zero or larger than $2^{|G||H|}$, only depends on the characteristic of $F$ (i.e. it is the same when computed over two fields of the same characteristic).
\end{remark}
\begin{proof}
Note that two fields have the same characteristic if and only if they contain isomorphic prime subfields. Let $F_0\subset F$ be a prime subfield. It is enough to show that $\ell(J_{G,H})$ computed over $F$ coincides with $\ell(J_{G,H})$ computed over $F_0$. Let $K_0 = F_0(x_{i,j}\mid 1\leq i\leq n,1\leq j\leq m)$ and recall that $K = F(x_{i,j}\mid 1\leq i\leq n,1\leq j\leq m)$. For clarity, we shall write $\ell_{F'}(J_{G,H})$ to emphasize we are computing the analytic spread over a particular field $F'$. Note that $\mathcal{H}(G,H)$ has entries in the subfield $K_0$ of $K$. Then by Proposition \ref{propB} and the fact that the rank of a matrix over a field $L$ is the same as its rank computed over any field extension of $L$, we have
    \begin{equation*}
        \ell_{F}(J_{G,H}) = \rk_{K}( \mathcal{H}(G,H)) = \rk_{K_0} (\mathcal{H}(G,H))  = \ell_{F_0}(J_{G,H}).
    \end{equation*}
\end{proof}

\section{Proof of Theorem \ref{thm-hyp} and Corollary \ref{cor-hyp}}\label{sec3}

In this section we give some applications 
of the representability results 
from \cite{BMS}.

\begin{proof}[proof of Theorem \ref{thm-hyp}]
Recall that $f_{i,j} = x_iy_j - x_jy_i$ for $1\leq i<j\leq n$. Let $N = \binom{n}{2}$, let $g_1 = f_{1,1},\ldots,f_{n-1,n}$ be the binomials $f_{i,j}$, $1\leq i<j\leq n$, arranged in lexicographical order, and take $E = \{g_1,\ldots,g_N\}$. 

Then the matrix $\mathcal{J}$ of Lemma \ref{lem2} is column equivalent to the matrix
$\mathcal{H}_2(G)$ of Definition~\ref{def:hypconn}. 
By Lemma \ref{lem0} and 
Lemma \ref{lem2}
\begin{equation*}
    \ell(J_G) = \rk (\mathcal{M}_{E,F}) = \rk (\mathcal{H}_2(G)).
\end{equation*}
\end{proof}

\begin{proof}[proof of Corollary \ref{cor-hyp}]
Let $G'$ be a subgraph of $G$ which is maximal having the property that the rows of $\mathcal{H}_2(G')$ are independent, and note how $\mathcal{H}_2(G')$ is a matrix consisting of a subset of the rows of $\mathcal{H}_2(G)$. Then by Theorem \ref{thm-hyp}
\begin{equation*}
    \ell(J_G) = \rk (\mathcal{H}_2(G) ) = \rk( \mathcal{H}_2(G')) = |E(G')|.
\end{equation*}
Now we are done, as \cite[Theorems 4.2 and 4.4]{Bernstein} together imply that for a subgraph $G'$ of $G$, the rows of $\mathcal{H}_2(G')$ are independent if and only if $G'$ has an acyclic orientation of its edges with no alternating closed trail.
\end{proof}

We conclude this section by stating some additional consequences of Theorem \ref{thm-hyp}.

Theorem \ref{thm-hyp} combined with \cite[Lemma 4.7]{Crespo-Santos} yields the exact value of the analytic spread of the complete bipartite graph $K_{n_1,n_2}$ on $n:=n_1+n_2$ vertices when computed over a field of characteristic zero or characteristic larger than $2^n$:
    \begin{equation*}
        \ell(J_{K_{n_1,n_2}}) = \begin{cases}
n_1n_2 \text{ }&\min\{n_1,n_2\}\leq 2\\
2n-4 &\min\{n_1,n_2\}\geq 2.
        \end{cases}
    \end{equation*}

Now we apply Theorem \ref{thm-hyp} and the gluing lemma \cite[Lemma 4.3]{Kalai} to prove that \cite[Question 5.5]{LHVDRDOC} has a positive answer when analytic spread is computed over a field of characteristic zero or sufficiently large.

\begin{remark}\label{prop5.1}
    Let $G$ be a graph and let $G'$ be a graph obtained by adding a handle to $G$. Then $\ell(J_{G'}) = \ell(J_G)+2$ when computed over a field of characteristic zero or characteristic larger than $2^{|G|}$.
\end{remark}
\begin{proof}
It is easy to see that $\rk(\mathcal{H}_2(G'))
    = \rk(\mathcal{H}_2(G))+2$, see e.g.~\cite[Lemma 4.3 (i)]{Kalai}, hence Theorem \ref{thm-hyp} yields
$\ell(J_{G'}) = \rk(\mathcal{H}_2(G'))
    = \rk(\mathcal{H}_2(G))+2
    = \ell(J_G)+2$.
\end{proof}

See Proposition \ref{prop-handle} for a generalization of Remark \ref{prop5.1} regarding pairs of graphs.

\section{Basic graph operations and \texorpdfstring{$\ell(J_{G,H})$}{l(J\_G,H)}}\label{sec4}

In this section we analyze the behavior of $\ell(J_{G,H})$ upon some basic operations on graphs, such as taking disjoint unions and augmentation of $G$ and $H$ by leaves and handles. 
The first remark worls over any field.

\begin{remark}\label{rmk-upper}
    For every field and every graphs $G$ and $H$, $\ell(J_{G,H})\leq |G||H|$.
\end{remark}

\begin{proof}
    We have that $J_{G,H}$ is an ideal in a polynomial ring $S$ in $|G||H|$ variables, so that $\ell(J_{G,H})\leq \dim(S) = |G||H|$.
\end{proof}

Recall that 
$\mathcal{H}(G,H)$ is the Jacobian of the binomial edge ideal $J_{G,H}$ where the rows and columns are given a fixed ordering. Now we observe a precise description of $\mathcal{H}(G,H)$, which will be useful in proving Theorem \ref{thmA} and Theorem \ref{thmA-2}.

\begin{notation}\label{constr-pair}
Let $n$ and $m$ be positive integers, let $G$ be a graph on $[n]$, and let $H$ be a graph on $[m]$. Let $S = F[x_{i,j}\mid 1\leq i\leq n, 1\leq j\leq m]$ be the polynomial ring in $nm$-many variables, and let $K = \qf(S) = F(x_{i,j}\mid 1\leq i\leq n, 1\leq j\leq m)$. For edges $e = \{i,j\}\in E(K_n)$ and $e' = \{k,l\}\in E(K_m)$ such that $i<j$ and $k<l$ define the binomial
\begin{equation*}
    f_{e,e'} = x_{i,k}x_{j,l}-x_{i,l}x_{j,k}
\end{equation*} so that $J_{G,H} = (f_{e,e'}\mid (e,e')\in E(G)\times E(H))$. For $e = \{i,j\}\in E(K_n)$ and $e' = \{k,l\}\in E(K_m)$ such that $i<j$ and $k<l$, define the row vector
\begin{equation*}
    \begin{split}
        H_{e,e'} &= \bigg(\frac{\partial}{\partial x_{1,1}}(f_{e,e'})\text{ }
    \frac{\partial}{\partial x_{1,2}}(f_{e,e'})\text{ }
    \cdots
    \frac{\partial}{\partial x_{1,m}}(f_{e,e'})\text{ }
    \frac{\partial}{\partial x_{2,1}}(f_{e,e'})\text{ }
    \cdots
    \frac{\partial}{\partial x_{n,m}}(f_{e,e'})\bigg)\\
    &=
    (0 \ldots 0\text{ } x_{j,l} \text{ } 0\ldots 0 \text{ } -x_{j,k} \text{ } 0 \ldots 0 \text{ }
    -x_{i,l} \text{ } 0 \ldots 0 \text{ }
    x_{i,k} \text{ }
    0 \ldots 0),
    \end{split}
\end{equation*}
and order these $H_{e,e'}$ lexicographically, prioritizing $e'$, namely:
\begin{equation*}
    \begin{split}
        H_{\{1,2\},\{1,2\}} &< H_{\{1,3\},\{1,2\}} <\ldots < H_{\{1,m\},\{1,2\}}<H_{\{2,3\},\{1,2\}}<\ldots <
    H_{\{n-1,n\},\{1,2\}}\\&< \ldots < H_{\{n-1,n\},\{m-1,m\}}.
    \end{split}
\end{equation*}
For graphs $G$ and $H$, let $H_1,\ldots,H_r$ be the rows $\{H_{e,e'}\mid e = \{i,j\}\in E(G), e' = \{k,l\}\in E(H), i<j, \text{ and } k<l\}$ ordered by $<$. Then define the following \emph{generalized hyperconnectivity matrix} of the pair $(G,H)$

\begin{equation}\label{hgh-df}
    \mathcal{H}(G,H) = \begin{pmatrix}
        H_1\\
        H_2\\
        \vdots \\
        H_r
    \end{pmatrix}\in K^{e(G)e(H)\times nm}.
\end{equation}
\end{notation}

Now we are ready to prove Proposition \ref{propB}, namely, that over suitable field characteristics
\begin{equation}\label{eq-pair}
   \ell(J_{G,H}) = \rk (\mathcal{H}(G,H)).
\end{equation}

\begin{proof}[proof of Proposition \ref{propB}]
Let $E=\{g_1,\ldots,g_r\}$ be the set of minimal binomial generators of $J_{G,H}$.
By Lemma \ref{lem0}, the rank of the algebraic matroid $\mathcal{M}_{E,F}$ of Lemma \ref{lem2} is exactly $\ell(J_{G,H})$.
By Remark~\ref{rmk-upper}, for characteristic zero or larger than $2^{|G||H|}$, we have by Lemma \ref{lem2}:
    \begin{equation*}
        \ell(J_{G,H}) = \rk(\mathcal{J}) = \rk (\mathcal{H}(G,H)).
    \end{equation*}
\end{proof}

Now we are ready to prove Corollary \ref{corr-mon}.
\begin{proof}[proof of Corollary \ref{corr-mon}]
    Using (\ref{hgh-df}), we see that $\mathcal{H}(G,H)$ is a submatrix of $\mathcal{H}(G',H')$.
    Then by Proposition \ref{propB},
    \begin{equation*}
        \ell(J_{G,H}) = \rk (\mathcal{H}(G,H))\leq \rk (\mathcal{H}(G',H')) = \ell(J_{G',H'}).
    \end{equation*}
\end{proof}

Next note that the analytic spread $\ell(J_{G,H})$ is additive along the components of $G$ and $H$:

\begin{remark}\label{rmk-comp}
    Let $G$ and $H$ be graphs and let $H_1,\ldots,H_r$ be the connected components of $H$. Then
    \begin{equation*}
        \ell(J_{G,H}) = \sum_{i=1}^r\ell(J_{G,H_i}).
    \end{equation*}
\end{remark}

\begin{proof}
   By (\ref{hgh-df}), we have the following diagonal block matrix expression for $\mathcal{H}(G,H)$ after possibly permuting the rows and columns of $\mathcal{H}(G,H)$:
    \begin{equation}\label{diag1} 
        \mathcal{H}(G,H) = \begin{pmatrix}
            \mathcal{H}(G,H_1) \text{ }& 0  \text{ }&\ldots \text{ }& 0 \text{ }& 0\\
            0 \text{ }& \mathcal{H}(G,H_2)  \text{ }&\ldots \text{ }& 0 \text{ }& 0\\
            \vdots \text{ }& \vdots  \text{ }& \text{ }& \vdots \text{ }& \vdots\\
            0 \text{ }& 0  \text{ }& \ldots \text{ }& \mathcal{H}(G,H_{r-1}) \text{ }& 0\\
            0 \text{ }& 0  \text{ }& \ldots \text{ }& 0 \text{ }& \mathcal{H}(G,H_r)
        \end{pmatrix}.
    \end{equation}
    Then Proposition \ref{propB} produces the following formula:
    \begin{equation*}
        \ell(J_{G,H}) = \rk (\mathcal{H}(G,H)) = \sum_{i=1}^r \rk (\mathcal{H}(G,H_i)) = \sum_{i=1}^r \ell(J_{G,H_i}).
    \end{equation*}
\end{proof}

Remark \ref{rmk-comp} and Lemma \ref{lem-switch} together 
imply
the following corollary, which 
reduces Problem~\ref{prob:char-l(J_{G,H})} to the case where both graphs $G$ and $H$ are connected.
\begin{corollary}\label{cor-comp2}
    Let $G$ and $H$ be graphs, let $H_1,\ldots,H_r$ be the connected components of $H$, and let $G_1,\ldots,G_s$ be the connected components of $G$. Then
    \begin{equation*}
        \ell(J_{G,H}) = \sum_{1\leq i\leq r\text{, }1\leq j\leq s}\ell(J_{G_j\text{, }H_i}).
    \end{equation*}
\end{corollary}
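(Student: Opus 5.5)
The plan is to combine Remark \ref{rmk-comp} with Lemma \ref{lem-switch}, applying the remark once in each coordinate. First, I apply Remark \ref{rmk-comp} to the pair $(G,H)$, decomposing along the components $H_1,\ldots,H_r$ of $H$:
\begin{equation*}
\ell(J_{G,H})=\sum_{i=1}^r \ell(J_{G,H_i}).
\end{equation*}
Next, for each fixed $i$, I use Lemma \ref{lem-switch} to write $\ell(J_{G,H_i})=\ell(J_{H_i,G})$. Applying Remark \ref{rmk-comp} again, now to the pair $(H_i,G)$ with the components $G_1,\ldots,G_s$ of $G$ in the second slot, gives
\begin{equation*}
\ell(J_{H_i,G})=\sum_{j=1}^s \ell(J_{H_i,G_j}).
\end{equation*}
A final application of Lemma \ref{lem-switch} turns each summand back into $\ell(J_{G_j,H_i})$. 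Substituting into the first display yields the double sum in the statement.

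The only point needing care is the standing hypotheses. Remark \ref{rmk-comp} is proved via Proposition \ref{propB}, so it requires the characteristic assumption of Section \ref{sec-2.2}. That assumption is inherited by subgraphs, because $2^{|G_j||H_i|}\le 2^{|G||H|}$. Also, components of graphs without isolated vertices again have no isolated vertices, and the rank identity in Remark \ref{rmk-comp} concerns only the block structure. No genuine obstacle is expected.

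Alternatively, one can argue directly from the matrix. After permuting rows and columns, $\mathcal{H}(G,H)$ is block diagonal with blocks $\mathcal{H}(G_j,H_i)$. This holds because a row $H_{e,e'}$ is supported only on the variables $x_{a,b}$ with $a\in e$ and $b\in e'$, and these variables lie in $V(G_j)\times V(H_i)$ when $e\in E(G_j)$ and $e'\in E(H_i)$. Rank is additive over diagonal blocks, so Proposition \ref{propB} gives the same formula.
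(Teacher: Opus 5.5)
Your proof is correct and is exactly the paper's argument: the paper derives Corollary~\ref{cor-comp2} directly from Remark~\ref{rmk-comp} (additivity over the components of the second graph) together with Lemma~\ref{lem-switch} (symmetry in the two graphs), just as you do. Your check that the characteristic hypothesis is inherited by each pair $(G_j,H_i)$ is a worthwhile detail, and your alternative block-diagonal argument via Proposition~\ref{propB} is also valid.
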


Our next application of Proposition \ref{propB} is given in the following Proposition, which demonstrates a sharp upper bound for the behavior of $\ell(J_{G,H})$ upon adding a leaf to $G$ or $H$.

\begin{proposition}\label{prop-leaf3}
    Let $G$ and $H$ be graphs and let $H'$ be a graph obtained by adding a leaf to $H$. Then
    \begin{equation*}
        \ell(J_{G,H'}) \leq  \ell(J_{G,H})+ \ell(J_G), 
    \end{equation*}
    with equality if $G$ is a tree.
\end{proposition}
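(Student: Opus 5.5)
We work throughout with the matrix $\mathcal{H}(G,H')$ and Proposition \ref{propB}, so that $\ell(J_{G,H'}) = \rk(\mathcal{H}(G,H'))$ and likewise for $(G,H)$ and for $J_G=J_{G,K_2}$. Let $v$ be the new vertex of $H'$, attached to $a\in V(H)$, so that $E(H') = E(H)\sqcup\{\{a,v\}\}$. The rows of $\mathcal{H}(G,H')$ split into two blocks: the rows $H_{e,e'}$ with $e'\in E(H)$, which form $\mathcal{H}(G,H)$ padded by zero columns at the variables $x_{i,v}$, and the rows $H_{e,\{a,v\}}$ with $e\in E(G)$. Subadditivity of rank under a splitting of rows gives
\[
\rk(\mathcal{H}(G,H')) \le \rk(\mathcal{H}(G,H)) + \rk\big(H_{e,\{a,v\}}\mid e\in E(G)\big).
\]
The second block involves only the columns $x_{i,a},x_{i,v}$ for $i\in V(G)$. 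Restricted to these columns it is exactly the Jacobian of the binomials $x_{i,a}x_{j,v}-x_{i,v}x_{j,a}$, $\{i,j\}\in E(G)$. This is $\mathcal{H}(G,K_2)$ in the variables $x_{\cdot,a},x_{\cdot,v}$, whose rank is $\rk(\mathcal{H}_2(G)) = \ell(J_G)$ by Theorem \ref{thm-hyp}, up to column equivalence. This proves the inequality.

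For equality when $G$ is a tree, we first note that $\ell(J_G)=e(G)=n-1$ for a tree. This follows from Corollary \ref{cor-hyp}: orient the tree acyclically, and no closed trail exists at all. Alternatively, the upper bound $n-1$ together with the lower bound recalled in the introduction gives the same value. So it suffices to show that the $n-1$ new rows $H_{e,\{a,v\}}$ are linearly independent modulo the row space of $\mathcal{H}(G,H)$.

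The key observation is that the new rows are the only rows with nonzero entries in the columns $x_{i,v}$. Consider the projection onto the columns $\{x_{i,v}\mid i\in[n]\}$. The row space of $\mathcal{H}(G,H)$ projects to zero. The row $H_{\{i,j\},\{a,v\}}$ projects to the vector with entry $\pm x_{i,a}$ at $x_{j,v}$ and $\mp x_{j,a}$ at $x_{i,v}$. Thus the projected new rows form the $1$-hyperconnectivity-type matrix of $G$ in the variables $x_{\cdot,a}$, namely a weighted signed incidence matrix. Scaling column $x_{i,v}$ by $1/x_{i,a}$ turns it into the ordinary incidence matrix of $G$ with rows scaled by $x_{i,a}x_{j,a}$. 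By Remark \ref{rmk-1hc}, or directly, this has full rank $e(G)$ when $G$ is a forest. Hence any linear combination of the new rows lying in the row space of $\mathcal{H}(G,H)$ must be trivial, and the ranks add:
\[
\rk(\mathcal{H}(G,H')) = \rk(\mathcal{H}(G,H)) + (n-1) = \ell(J_{G,H})+\ell(J_G).
\]

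The main subtlety is bookkeeping the indices $i<j$ and $k<l$ in the definition of $H_{e,e'}$ so that the signs and positions are correct; this does not affect the rank arguments. The equality step is essentially the forest independence of the graphic matroid, so I expect no real obstacle beyond checking that the projection argument is valid over $K$, which is clear since all entries are nonzero variables.
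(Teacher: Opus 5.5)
Your proposal is correct and follows the paper's proof: the same block form of $\mathcal{H}(G,H')$ with a zero block above the $x_{\cdot,v}$ columns, subadditivity of rank for the inequality, and, for equality when $G$ is a tree, full row rank of the weighted incidence block (the paper kills the lower-left block by specializing $x_{i,v}\mapsto 0$, while you project onto the $x_{\cdot,v}$ columns, which amounts to the same thing). One small slip: to turn that block into a scaled incidence matrix you must multiply column $x_{i,v}$ by $x_{i,a}$ rather than divide by it, though citing Remark~\ref{rmk-1hc} already gives the full rank.
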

\begin{proof}
Without loss of generality, we can assume $V(G) = [n]$, $V(H) = [m]$ and $H'$ is the graph obtained by adding the edge $e_0:=\{k,m+1\}$ to $H$.
By (\ref{hgh-df}), we have a decomposition
    \begin{equation}\label{eq-block}
        \mathcal{H}(G,H') = \begin{pmatrix}
            \mathcal{H}(G,H)\text{ }& 0\\
            A & M
        \end{pmatrix} \in K^{e(H')e(G)\times (nm+n)}
    \end{equation}
    where $(A \text{ }M)$ is the submatrix whose rows are the vectors $H_{e,e_0}$ such that $e\in E(G)$. 

Thus, $(A \text{ }M)$ is obtained from the $2$-hyperconnectivity matrix of $G$ by changing variables names and adding zero columns for the pairs of vertices not including $k$ nor $m+1$.

   Then Proposition \ref{propB} yields that
    \begin{equation*}
        \ell(J_{G,H'})  = \rk( \mathcal{H}(G,H'))\leq \rk (\mathcal{H}(G,H) )+ \rk ((A\text{ }M)) 
        =
        \ell(J_{G,H}) + \ell(J_G)
    \end{equation*}
    which completes the proof of the inequality.\\

Now suppose that $G$ is a tree. Note that $M$ is the 1-hyperconnectivity matrix of $G$. Being a tree, 
by Remark \ref{rmk-1hc},  
$\rk (M)=\rk (\mathcal{H}_1(G)) = e(G)=n-1$. Let $\phi: S\to S$ be the $F$-algebra map such that $\phi(x_{i,j}) = x_{i,j}$ for $j\leq m$ and equals zero otherwise and for any matrix $A$ over $K$, let $\phi(A)$ denote the matrix obtained by applying $\phi$ to each entry of $A$.
Then by Proposition \ref{propB}, we have,
    \begin{equation*}
        \ell(J_{G,H'}) \geq \rk(\phi(\mathcal{H}(G,H'))) = 
        \rk (\mathcal{H}(G,H)) + \rk (M) =  \ell(J_{G,H})+\ell(J_G)
    \end{equation*}
    which completes the proof.
    \end{proof}

The following corollary of Proposition \ref{prop-leaf3} will be useful in the proof of Theorem \ref{thmA} (2).

\begin{corollary}\label{cor-forest}
    Let $T$ and $T'$ be forests with $c_{T}$ and $c_{T'}$ many components resp. Then
    \begin{equation*}
        \ell(J_{T,T'}) = (|T|-c_T)(|T'|-c_{T'}).
    \end{equation*}
\end{corollary}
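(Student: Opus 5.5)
By Corollary \ref{cor-comp2}, $\ell(J_{T,T'})$ is the sum of $\ell(J_{T_j,T'_i})$ over all pairs of components $T_j$ of $T$ and $T'_i$ of $T'$. The right-hand side also splits as a sum over such pairs, since
\begin{equation*}
(|T|-c_T)(|T'|-c_{T'})=\sum_{i,j}(|T_j|-1)(|T'_i|-1).
\end{equation*}
So it suffices to prove that $\ell(J_{T,T'})=(|T|-1)(|T'|-1)=e(T)e(T')$ when $T$ and $T'$ are trees. Isolated vertices need no treatment, since the paper assumes there are none. If one wants to allow single-vertex components, they contribute zero on both sides.

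For trees, I induct on $|T'|$, building $T'$ from a single edge $K_2$ by adding leaves. At each step Proposition \ref{prop-leaf3} applies with equality, because $T$ is a tree:
\begin{equation*}
\ell(J_{T,T''})=\ell(J_{T,T'''})+\ell(J_T),
\end{equation*}
where $T''$ is obtained from $T'''$ by adding a leaf.

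For the base case, $\ell(J_{T,K_2})=\ell(J_T)$ by Remark \ref{rem:H=K_2}. We need $\ell(J_T)=e(T)=|T|-1$ for a tree $T$. This follows from Theorem \ref{thm-hyp} together with the fact that trees are independent in $\mathcal{H}_2$: by Corollary \ref{cor-hyp}, any acyclic orientation of a tree has no closed trail at all, let alone an alternating one. An alternative is the cited result of \cite{LHVDRDOC} on forests and unicycles.

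It is also natural to handle the base case directly with the same specialization trick used in Proposition \ref{prop-leaf3}. Grow $T$ from a single edge by adding leaves, and apply Proposition \ref{prop-leaf3} with the roles switched via Lemma \ref{lem-switch}, starting from $\ell(J_{K_2,K_2})=1$. Summing the increments gives $e(T)e(T')$.

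The main point to check is that the equality case of Proposition \ref{prop-leaf3} really holds in the field setting we need. Its proof relies on Proposition \ref{propB}, so it holds under the standing characteristic assumptions. If the statement is wanted over any field, as Theorem \ref{thmA}(1) suggests, I would instead derive the bounds as follows. The upper bound $e(T)e(T')$ follows from $\ell\le\mu$ and the count of generators. For the lower bound, I would use Lemma \ref{lem-ineq-any_characteristic}, which holds in every characteristic: the specialization $\phi$ yields a block lower-triangular Jacobian with the full-rank blocks $M=\mathcal{H}_1(T)$ on the diagonal, so the rank is at least the sum of the block ranks.
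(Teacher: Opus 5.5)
Your proposal is correct and follows the paper's proof. You reduce to trees via Corollary~\ref{cor-comp2}, then induct by adding leaves using the equality case of Proposition~\ref{prop-leaf3} together with Lemma~\ref{lem-switch}, starting from $\ell(J_{K_2,K_2})=1$; your alternative base case through Theorem~\ref{thm-hyp} also works. Your closing any-field argument is a worthwhile addition: the upper bound comes from counting generators, and the lower bound from Lemma~\ref{lem-ineq-any_characteristic} plus the block structure of the Jacobian. It matters because the paper's proof inherits the characteristic hypothesis of Proposition~\ref{propB}, yet Theorem~\ref{thmA}(1) later invokes this corollary over arbitrary fields.
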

\begin{proof}
    By Corollary \ref{cor-comp2} we can assume that $T$ and $T'$ are trees, and now the result follows by induction from Proposition \ref{prop-leaf3}, Lemma \ref{lem-switch}, and the fact that $\ell(J_{K_2,K_2}) = \ell(J_{K_2}) = 1$.
\end{proof}

\begin{proposition}\label{prop-handle}
    Let $G$ and $H$ be graphs, and let $H'$ be a graph obtained by adding a handle to $H$. Then, $\ell(J_{G,H'}) \leq \ell(J_{G,H})+ 2\ell(J_G)$. Equality can fail even if $G$ and $H$ are both trees.
\end{proposition}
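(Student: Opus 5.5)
The inequality follows the proof of Proposition \ref{prop-leaf3}, with the block decomposition now having two new row blocks instead of one. Take $V(G)=[n]$, $V(H)=[m]$, and let $H'$ be obtained by adding the vertex $m+1$ and the edges $e_1=\{a,m+1\}$ and $e_2=\{b,m+1\}$ with $a,b\in[m]$. By (\ref{hgh-df}), after ordering rows and columns suitably, we can write
\begin{equation*}
\mathcal{H}(G,H')=\begin{pmatrix}\mathcal{H}(G,H) & 0\\ A_1 & M_1\\ A_2 & M_2\end{pmatrix},
\end{equation*}
where the last $n$ columns correspond to the variables $x_{i,m+1}$, $i\in[n]$. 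The row block $(A_i\ M_i)$ consists of the rows $H_{e,e_i}$ for $e\in E(G)$. Its only nonzero columns are those of the variables $x_{\cdot,a}$ (resp. $x_{\cdot,b}$) and $x_{\cdot,m+1}$. Hence, after renaming variables and deleting zero columns, it is exactly the $2$-hyperconnectivity matrix $\mathcal{H}_2(G)$. By Theorem \ref{thm-hyp}, each block therefore has rank $\ell(J_G)$.

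Subadditivity of rank over row blocks gives $\rk\mathcal{H}(G,H')\le \rk\mathcal{H}(G,H)+2\,\ell(J_G)$. Proposition \ref{propB} then turns this into $\ell(J_{G,H'})\le \ell(J_{G,H})+2\ell(J_G)$, under the standing characteristic assumption.

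For the failure of equality, the natural candidate is the smallest one: $G=K_2$ and $H=P_2=K_2$, so that adding a handle to $H$ gives $H'=C_3=K_3$. Then $\ell(J_G)=\ell(J_{K_2})=1$ and $\ell(J_{K_2,K_2})=1$, so the bound is $3$. By Remark \ref{rem:H=K_2}, $\ell(J_{K_2,K_3})=2\cdot 3-3=3$, so equality holds and this example does not work.

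I would next try $G=P_3$, $H=K_2$, $H'=K_3$. Here $\ell(J_G)=2$ because $P_3$ is a tree and hence $2$-independent, and Corollary \ref{cor-forest} gives $\ell(J_{P_3,K_2})=2$, so the bound is $6$. On the other side, Lemma \ref{lem-switch}, Remark \ref{rem:H=K_2} and Theorem \ref{spread_kn_path} (with $m=3$, path $P_3$) give $\ell(J_{K_3,P_3})=3\cdot3-3=6$, so again equality holds.

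The main obstacle is therefore finding a genuine strict example. Note that Theorem \ref{spread_kn_path} gives exactly the bound whenever $H=K_2$, since then $\ell(J_{K_3,P_n})=3n-3=\ell(J_{P_n,K_2})+2\ell(J_{P_n})=(n-1)+2(n-1)$. So $H$ must be larger. I would try $G=P_3$ and $H=P_3$ with a handle on the two endpoints, so that $H'=C_4$. The bound is $\ell(J_{P_3,P_3})+4=4+4=8$. Theorem \ref{thmA-2}(1) says the star $St_3=P_3$ does not exceed $C_4$, so it gives no obstruction here. I would instead use Theorem \ref{thmA-2}(2) with $H'=C_4$ even and a cycle in $G$: take $G=C_3$ instead.

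Strictly, though, the statement asks for trees $G$ and $H$, so the fallback is a direct Jacobian computation. Take $G=P_3$ and $H=P_3$ with the handle joining the endpoints, giving $H'=C_4$, and compute $\rk\mathcal{H}(P_3,C_4)$. This matrix has $8$ rows and $12$ columns. I expect a row dependency coming from the even cycle, by the same mechanism as Theorem \ref{thmA-2}(2), in which case the rank is at most $7<8$. I would verify this by exhibiting an explicit vanishing combination of the rows, or by a Macaulay2 computation of the rank.
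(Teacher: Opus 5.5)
Your proof of the inequality is the paper's: you decompose $\mathcal{H}(G,H')$ into $\mathcal{H}(G,H)$ padded with zeros plus two row blocks, each a renamed copy of $\mathcal{H}_2(G)$ of rank $\ell(J_G)$, then apply subadditivity of rank and Proposition~\ref{propB}.

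The gap is in the second assertion. You never exhibit trees $G,H$ with strict inequality, and your final candidate $G=P_3$, $H=P_3$, $H'=C_4$ in fact gives equality. The matrix $\mathcal{H}(P_3,C_4)$ is $8\times 12$ and has full row rank $8$ by Theorem~\ref{thmA}(2), since it pairs a path with a unicycle. Concretely, its right kernel is exactly $4$-dimensional. It is spanned by the two directions adding row $2$ of $X=(x_{i,j})$ to row $1$ or to row $3$ (allowed because $1$ and $3$ are leaves of $P_3$ attached to $2$), and by the two scaling directions $\mathrm{diag}(1,-1,1)X$ and $X\,\mathrm{diag}(1,-1,1,-1)$. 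Hence $\ell(J_{P_3,C_4})=8=\ell(J_{P_3,P_3})+2\ell(J_{P_3})$.

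The even-cycle mechanism of Theorem~\ref{thmA-2}(2) does not transfer. There both graphs are cycles, so $\mathcal{H}$ is square, and the single alternating-sign kernel vector (just the scaling $\mathrm{diag}(1,-1,1,\dots)X$) already forces a row dependency. Here you would need a fifth kernel vector, and there is none. Your switch to $G=C_3$ leaves the class of trees. Your conclusion that $H=K_2$ cannot work was drawn only from $G=P_n$.

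The missing idea is that the obstruction available for trees is the star obstruction, Lemma~\ref{lem-star}, i.e. Theorem~\ref{thmA-2}(1). Already $G=St_4$, $H=K_2$, $H'=K_3$ works. The bound is $\ell(J_{St_4,K_2})+2\ell(J_{St_4})=3+6=9$. Lemma~\ref{lem-star} with $n=4$ gives $\rk(\mathcal{H}(St_4,C_3))\le 8$, so $\ell(J_{St_4,K_3})\le 8<9$. The paper instead takes $G=St_5$, $H=P_3$ and $H'$ the triangle with a pendant edge, where a computation gives $\ell(J_{G,H'})=14<16$. Strictness there also follows from Theorem~\ref{thmA-2}(1), since $St_5$ has larger order than the triangle.
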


Before proving the inequality part we show an example that equality may fail.

\begin{example}
    Let $G$ be the star on five vertices, let $H$ be the path on three vertices, and let $H'$ be the triangle with a leaf. Note that $H'$ is obtained by adding a handle to $H$. 
    Calculations show that
    $\ell(J_{G,H}) = 8$, 
    $\ell(J_G) = 4$ 
    and $\ell(J_{G,H'})=14$.
    In particular, $\ell(J_{G,H'}) < 16 = \ell(J_{G,H}) + 2\ell(J_G)$.
\end{example}

\begin{proof}[proof of Proposition \ref{prop-handle}]
Write $V(G) = [n]$ and $V(H) = [m]$. Write $E(H') = E(H) \sqcup \{e,e'\}$. Then by (\ref{hgh-df}) there are matrices $H_1$ and $H_2$ which have isomorphic column spaces to $\mathcal{H}(G,e)$ and $\mathcal{H}(G,e')$, such that (after possibly permuting the columns of $\mathcal{H}(G,H)$) we have
    \begin{equation*}
        \mathcal{H}(G,H') = \begin{pmatrix}
            \mathcal{H}(G,H) \text{ } 0\ldots 0\\
            H_1\\
            H_2
        \end{pmatrix}
    \end{equation*}
    where $(0...0)\in K^{n}$.

    Then by subadditivity of rank and Proposition \ref{propB} we have,
    \begin{equation*}
        \begin{split}
            \ell(J_{G,H'}) &= \rk (\mathcal{H}(G,H'))
        \leq \rk (\mathcal{H}(G,H)) +\rk (\mathcal{H}(G,e)) + \rk (\mathcal{H}(G,e'))
        \\&= \ell(J_{G,H}) + \ell(J_{G,e}) + \ell(J_{G,e'})
        = \ell(J_{G,H})+ 2\ell(J_{G})
        \end{split}
    \end{equation*}
    since $J_{G_0,K_2} = J_{G_0}$ for any graph $G_0$.
\end{proof}

Combining Proposition \ref{prop-handle} with Proposition \ref{prop-leaf3} yields the following:

\begin{corollary}
    Let $G$ and $H$ be graphs, and let $H'$ be a graph obtained from $H$ by adding a generalized handle with $s\geq 2$ edges. Then, $\ell(J_{G,H'}) \leq \ell(J_{G,H})+ s\ell(J_G)$.
\end{corollary}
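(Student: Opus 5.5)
Adding a generalized handle with $s\geq 2$ edges should reduce to adding one ordinary handle followed by $s-2$ leaves, so I would induct on the structure of the handle rather than on $s$ directly. Write the handle as $a,v_1,\ldots,v_s,b$, so that $H'$ has new vertices $v_1,\ldots,v_s$ and new edges $\{a,v_1\},\{v_1,v_2\},\ldots,\{v_{s-1},v_s\},\{v_s,b\}$.

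The key step is to build $H'$ in stages.
\begin{enumerate}
\item Let $H_1$ be obtained from $H$ by adding the single new vertex $v_s$ together with the edges $\{v_s,b\}$ and $\{a,v_s\}$, where the second edge stands in for the path $a,v_1,\ldots,v_s$. This is a handle, and Proposition \ref{prop-handle} gives $\ell(J_{G,H_1})\leq \ell(J_{G,H})+2\ell(J_G)$.
\item Subdividing $\{a,v_s\}$ back into a path does not obviously correspond to adding leaves, so I would reorder the construction instead. First add the path $a,v_1,\ldots,v_{s-1}$ as $s-1$ successive leaves. Then add the last vertex $v_s$ with the two edges $\{v_{s-1},v_s\}$ and $\{v_s,b\}$, which is an ordinary handle attached at $v_{s-1}$ and $b$.
\end{enumerate}

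With this ordering, $H'$ is obtained from $H$ by $s-2$ leaf additions followed by one handle addition. The count is $(s-1)+1=s$ new vertices and $(s-1)+2=s+1$ new edges, which matches the generalized handle.

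Let $H^{(t)}$ denote $H$ with the leaves $v_1,\ldots,v_t$ added, for $0\leq t\leq s-1$. Applying the inequality of Proposition \ref{prop-leaf3} at each leaf step gives $\ell(J_{G,H^{(t)}})\leq \ell(J_{G,H^{(t-1)}})+\ell(J_G)$. This needs only the inequality half of that proposition, so it holds for arbitrary $G$. Proposition \ref{prop-handle} then gives $\ell(J_{G,H'})\leq \ell(J_{G,H^{(s-1)}})+2\ell(J_G)$. Summing yields $\ell(J_{G,H'})\leq \ell(J_{G,H})+(s+1)\ell(J_G)$.

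This falls one term short: there are $s+1$ new edges, while the stated bound has only $s\ell(J_G)$. Each new edge $e$ contributes at most $\rk\mathcal{H}(G,e)=\ell(J_G)$ by the block argument, and there are $s+1$ of them. So getting $s$ requires a saving of one $\ell(J_G)$, and this is the main obstacle.

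I would look for the saving in the block decomposition. First pass, via Lemma \ref{lem-switch}, to the Jacobian restricted to the new columns $x_{\cdot,v_1},\ldots,x_{\cdot,v_s}$. There are only $sn$ new columns, so $\rk\mathcal{H}(G,H')\leq \rk\mathcal{H}(G,H)+sn$. This is weaker than needed unless $\ell(J_G)$ is close to $n$, so it does not settle the matter.

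The honest plan is therefore to argue via rows. Order the $s+1$ new edge blocks along the path. The first $s$ blocks each introduce the fresh column group of a new vertex, as leaf additions do, and each is bounded by $\ell(J_G)$. The final edge $\{v_s,b\}$ introduces no fresh columns, and here I would attempt to show that its rows lie in the span of the previous rows modulo a rank-$\ell(J_G)$ correction.

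If that fails, the statement as written with $s$ should perhaps be read with $s$ equal to the number of new vertices plus one, i.e.\ the number of edges in the handle. In that reading the leaf/handle decomposition above already proves the claim. I would check the convention for $s$ against the case $s=2$, which should recover Proposition \ref{prop-handle} with bound $2\ell(J_G)$.
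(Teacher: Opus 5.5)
\textbf{The misreading of $s$.} The statement counts \emph{edges}: a generalized handle with $s$ edges adds only $s-1$ new vertices $v_1,\ldots,v_{s-1}$. You instead used the $s$ from the definition, which counts new vertices and so gives $s+1$ edges. That is the only source of your ``missing'' $\ell(J_G)$.

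With the correct count, your leaf-then-handle decomposition is exactly the paper's argument, which combines Proposition~\ref{prop-handle} with Proposition~\ref{prop-leaf3}:
\begin{enumerate}
\item Add $v_1,\ldots,v_{s-2}$ as $s-2$ successive leaves. Each costs at most $\ell(J_G)$, by the inequality half of Proposition~\ref{prop-leaf3}, which holds for arbitrary $G$.
\item Add $v_{s-1}$ as an ordinary handle on $v_{s-2}$ (on $a$ if $s=2$) and $b$. This costs at most $2\ell(J_G)$, by Proposition~\ref{prop-handle}.
\end{enumerate}
The total is $(s-2)\ell(J_G)+2\ell(J_G)=s\,\ell(J_G)$, and the case $s=2$ is Proposition~\ref{prop-handle} itself. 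So your fallback paragraph is already a complete proof of the statement as written. You should lead with it rather than present it as a hedge. (With your indexing, the path $a,v_1,\ldots,v_{s-1}$ is $s-1$ leaf additions, not $s-2$ as you wrote once.)

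\textbf{The step that fails.} Your main plan is to save one $\ell(J_G)$ when the handle has $s$ new vertices and $s+1$ edges, for instance by showing the rows for $\{v_s,b\}$ lie in the span of earlier rows up to a rank-$\ell(J_G)$ correction. That strengthened inequality is false, so no such argument can exist. Take $G=K_2$, so $\ell(J_G)=1$, and $\ell(J_{K_2,H})=\ell(J_H)$ by Lemma~\ref{lem-switch}.
\begin{enumerate}
\item Since $K_2$ is a tree, the equality case of Proposition~\ref{prop-leaf3} shows each leaf raises the analytic spread by exactly $1$.
\item Remark~\ref{prop5.1} shows a handle raises it by exactly $2$.
\end{enumerate}
So if $H'$ is obtained by a generalized handle with $s$ new vertices, then $\ell(J_{K_2,H'})=\ell(J_{K_2,H})+(s-1)+2=\ell(J_{K_2,H})+s+1$. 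This exceeds $\ell(J_{K_2,H})+s\,\ell(J_{K_2})$. Drop the row-span attempt and commit to the edge-count reading, which is what the statement literally says.
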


\section{Proof of Theorem \ref{thmA} (1)}\label{sec5}

In this section we prove Theorem \ref{thmA} (1). We begin by proving sharpness in its upper bound.
Recall that during this section we have fixed an arbitrary field $F$.

\begin{lemma}\label{lem-complete}
    For $m,n\geq 3$, $\ell(J_{K_n,K_m})=nm$.
\end{lemma}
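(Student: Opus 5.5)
The upper bound $\ell(J_{K_n,K_m})\le nm$ holds over every field by Remark~\ref{rmk-upper}, so only the lower bound needs proof. Since no restriction on $\operatorname{char}(F)$ is allowed here, I will not use Proposition~\ref{propB}. Instead I will combine Lemma~\ref{lem0}, which gives $\ell(J_{K_n,K_m})=\trdeg_F F(f_{e,e'})$, with Lemma~\ref{lem-ineq-any_characteristic}. Together they reduce the claim to showing $\rk \mathcal{H}(K_n,K_m)=nm$ over $K$, in every characteristic.

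Rank can only drop under specialization. So it suffices to exhibit a point $P\in\mathbb{Z}^{n\times m}$ with entries in $\{0,\pm1\}$, together with $nm$ rows of $\mathcal{H}(K_n,K_m)$, such that the resulting $nm\times nm$ integer matrix at $P$ has determinant $\pm1$. Its reduction is then nonsingular over every prime field, and hence the generic matrix has rank $nm$ over $K$. I will construct $(P,\text{rows})$ by induction on $n$ and $m$, and keep the following normalization throughout:
\[
x_{1,1}=1,\quad x_{1,l}=0\ (l\ge2),\quad x_{2,1}=0,\quad x_{2,2}=1,\quad x_{k,1}=0\ (k\ge2).
\]
This normalization is invariant under transposition, so by Lemma~\ref{lem-switch} the step $m\to m+1$ is symmetric to the step $n\to n+1$.

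For the step $n\to n+1$, I set the new row of $P$ to zero, except for a value chosen later. The old rows of the Jacobian have zero entries in the $m$ new columns $x_{n+1,\ast}$. Hence the matrix is block lower triangular, and it is enough to choose $m$ new rows whose restriction to the new columns has determinant $\pm1$. I take the following rows.
\begin{itemize}
\item The rows $f_{\{1,n+1\},\{1,l\}}$ for $2\le l\le m$. At $P$, the derivative with respect to $x_{n+1,l}$ is $x_{1,1}=1$, and the derivative with respect to $x_{n+1,1}$ is $-x_{1,l}=0$, so each such row restricted to the new columns is a unit vector $e_l$.
\item The row $f_{\{2,n+1\},\{1,2\}}$. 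Its derivative with respect to $x_{n+1,1}$ is $-x_{2,2}=-1$, and its derivative with respect to $x_{n+1,2}$ is $x_{2,1}=0$.
\end{itemize}
The restricted $m\times m$ block is therefore triangular with diagonal entries $\pm1$. The normalization is preserved because the new row has $x_{n+1,1}=0$.

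The base case $n=m=3$ is the main obstacle. There I need an explicit $0/\pm1$ point satisfying the normalization at which the $9\times 9$ Jacobian of all nine $2$-minors of a $3\times3$ matrix is unimodular. The $2$-minors of a $3\times 3$ matrix are the entries of its adjugate. So the first attempt will be a point such as $x_{1,1}=x_{2,2}=1$, with the third row chosen, e.g. $(0,1,1)$, and then a direct computation of the determinant, aiming to reduce it to $\pm1$ by further block-triangular elimination. If no such point works cleanly, the fallback is to expand the generic $9\times9$ determinant as a polynomial in the $x_{i,j}$ and show that some monomial has coefficient $\pm1$. That polynomial is then nonzero over every prime field, which is all Lemma~\ref{lem-ineq-any_characteristic} requires.
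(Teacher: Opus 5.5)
Your plan reduces the lemma to $\rk \mathcal{H}(K_n,K_m)=nm$ in every characteristic, and that statement is false in characteristic $2$. Each $f_{e,e'}$ is a quadratic form, so Euler's identity gives $\sum_{i,j}x_{i,j}\,\partial f_{e,e'}/\partial x_{i,j}=2f_{e,e'}$. In characteristic $2$ this puts the vector of variables $(x_{i,j})$ in the right kernel of $\mathcal{H}(K_n,K_m)$, so its rank is at most $nm-1$. Equivalently, every $nm\times nm$ minor of the Jacobian, viewed as a polynomial with integer coefficients, has all coefficients even.

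Hence no point $P\in\{0,\pm1\}^{n\times m}$ and no choice of rows can give determinant $\pm1$. Concretely, at any nonzero such $P$, the reduction of $P$ modulo $2$ is itself a nonzero kernel vector of $\mathcal{H}(P)$ modulo $2$. Your fallback of finding a monomial with coefficient $\pm1$ fails for the same reason. Your inductive step $n\to n+1$ is correct, but the base case $n=m=3$ that you flagged is not merely hard; it is impossible.

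The lemma itself remains true in characteristic $2$. What happens there is that $K$ is algebraic but purely inseparable over $F(f_{e,e'})$. So the inequality of Lemma~\ref{lem-ineq-any_characteristic} is strict, and derivations cannot certify the lower bound.

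What is missing is an argument that detects algebraicity directly, including inseparable algebraicity. The paper does this geometrically. By \cite[Theorem 1]{DOG}, the map sending an $n\times m$ matrix to its $2$-minors has finite fibers on the dense open set of matrices of rank at least $3$. So its image has dimension $nm$, and Lemma~\ref{lem0} turns this into $\ell(J_{K_n,K_m})=nm$ with no hypothesis on $F$.

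An elementary substitute also works. For a $3\times 3$ submatrix $X$, the entries of $\mathrm{adj}(X)$ are $\pm$ the $2$-minors of $X$, and $\mathrm{adj}(\mathrm{adj}(X))=\det(X)\,X$ and $\det(\mathrm{adj}(X))=\det(X)^2$. Hence $x_{i,j}^2=\mathrm{adj}(\mathrm{adj}(X))_{i,j}^2/\det(\mathrm{adj}(X))\in F(f_{e,e'})$ for every entry of $X$. Since $n,m\ge 3$, every variable lies in such a submatrix, so $K$ is algebraic over $F(f_{e,e'})$ and Lemma~\ref{lem0} gives $nm$ in every characteristic.

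Your derivation-based approach could at most handle $\text{char}(F)\neq 2$, where the Jacobian really does have full rank because the extension is separable there. Characteristic $2$ would still need an argument like the one above, and since that argument works uniformly, it makes the Jacobian unnecessary.
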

\begin{proof}
    Let $J = J_{K_n,K_m}$. $J$ is the ideal of all 2 by 2 minors of a generic $n$ by $m$ matrix. So there is a parameterization of the affine variety $V$ whose coordinate ring $F[V]$ is the $F$-algebra generated by the minimal binomial generators of $J$, and said parameterization is given by the following surjective regular map of varieties
    \begin{equation*}
        K^{mn}\to V\subset K^{\binom{n}{2}\binom{m}{2}}
    \end{equation*}given by mapping an $n$ by $m$ matrix over $K$ to the vector of its 2 by 2 minors (in some fixed ordering). On the other hand, when restricted to the Zariski-dense open subset of matrices with rank at least three, the map $K^{mn}\to V$ has zero dimensional fibers by \cite[Theorem 1]{DOG}. Hence the image of this map has dimension $mn$. Then by Lemma \ref{lem0},
    \begin{equation*}
        mn = \dim V = \dim F[V] = \ell(J).
    \end{equation*}
\end{proof}

The proof of Lemma \ref{lem-complete} requires no assumption on the field $F$. It makes no reference to representability of the algebraic matroid of the field extension $F\subset F(J_{G,H})$.

Now we are ready to prove Theorem \ref{thmA} (1).

\begin{proof}[proof of Theorem \ref{thmA} (1)]
    The upper bound is given in Remark \ref{rmk-upper} and its tightness in Lemma \ref{lem-complete}. To see the lower bound, 

    take spanning forests $T\subset G$ and $T'\subset H$. 
    By Corollary \ref{cor-forest}, 
    $\ell(J_{T,T'})=(|T|-c_T)(|T'|-c_{T'})= \rk(\mathcal{H}(T,T'))$ (showing tightness, once the bound is proved). 
Being a submatrix, $\rk(\mathcal{H}(T,T'))\le \rk (\mathcal{H}(G,H))$. By Lemma~\ref{lem-ineq-any_characteristic}, 
$\rk (\mathcal{H}(G,H))\le \ell(J_{G,H})$, over any field, completing the proof of the lower bound.
\end{proof}

\section{Analytic Spread and Independence}\label{sec6}

In this section, we give a careful analysis of the pairs of graphs $(G,H)$ for which the analytic spread is as large as possible, i.e. $\ell(J_{G,H}) = e(G)e(H)$, which, by Proposition \ref{propB} (over suitable field characteristics), is equivalent to saying that the rows of the Jacobian matrix $\mathcal{H}(G,H)$ are linearly independent over the field $K = F(x_{i,j}\mid 1\leq i\leq |G|,1\leq j\leq |H|)$.\\

Corollary \ref{cor-forest} shows that 
independence holds for a pair of forests, 
and independence is characterized in 
the case that $H = K_2$ 
in Corollary \ref{cor-hyp}. 
Theorem \ref{thmA-2} shows that 
independence can fail 
even for pairs of graphs $(G,C_3)$ where $G$ contains a triangle with a leaf, while Theorem \ref{thmA} gives some classes of pairs of graphs $(G,H)$ where 
independence holds.

\subsection{General Criteria for 
independence}

We 
set some language for convenience:
    Let $G$ and $H$ be graphs on $[n]$ and $[m]$ respectively. Say that $(G,H)$ is  independent if $\ell(J_{G,H}) = e(G)e(H)$ and dependent if $\ell(J_{G,H}) < e(G)e(H)$. Say that $(G,H)$ is a basis if $(G,H)$ is independent and $nm = e(G)e(H)$. Say that $G$ is independent or dependent, or 
    special (respectively) if the rows of $\mathcal{H}_2(G)$ are linearly independent, dependent, or independent of cardinality $n$,
    respectively. By Theorem \ref{thm-hyp}, $G$ is 
    special
    if and only if $n = e(G) = \ell(J_G)$ if and only if $n = e(G)$ and there is some acyclic orientation of $G$ with no alternating closed trail.
Proposition \ref{propB} justifies this language:

\begin{remark}\label{rmk-depen}
    Let  $\mathcal{H}^{n,m}$ be the linear matroid associated to the matrix $\mathcal{H}(K_n,K_m)$. Then a pair of graphs $(G,H)$ on $n$ and $m$ vertices respectively, is a basis (when $m,n\ge 3$) if and only if $e(G)e(H) = nm$ and the rows of $\mathcal{H}(G,H)$ form a basis of the matroid $\mathcal{H}^{n,m}$, namely, 
    the rows of $\mathcal{H}(G,H)$ form a basis of $K^{mn}$. Similarly, $(G,H)$ is dependent (independent) if and only if the rows of $\mathcal{H}(G,H)$ form a dependent (independent) set in the matroid $\mathcal{H}^{n,m}$ (respectively).
\end{remark}
The following simple corollary will be useful and follows readily from Lemma \ref{lem2}.
\begin{lemma}\label{lem-subgr}
    Let $G\subset G'$ be graphs on $[n]$ and let $H\subset H'$ be graphs on $[m]$. Then the following statements hold, over characteristic zero or larger than $2^{nm}$.
    \begin{enumerate}
        \item[(1)] If $(G',H')$ is independent, then so is $(G,H)$.
        \item[(2)] If $(G,H)$ is dependent, then so is $(G',H')$. Furthermore,
        \begin{equation*}
            e(G)e(H) - \rk (\mathcal{H}(G,H)) \leq e(G')e(H') - \rk( \mathcal{H}(G',H')).
        \end{equation*}
        \item[(3)] $\rk (\mathcal{H}(G,H)) \le \rk( \mathcal{H}(G',H')).$
    \end{enumerate}
\end{lemma}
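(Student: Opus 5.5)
The plan is to reduce everything to linear algebra on the generalized hyperconnectivity matrices, using Proposition \ref{propB} (which rests on Lemma \ref{lem2}) to pass between analytic spread and matrix rank. The key structural observation is the one already used in the proof of Corollary \ref{corr-mon}. By Notation \ref{constr-pair}, every row $H_{e,e'}$ depends only on the pair of edges $(e,e')\in E(K_n)\times E(K_m)$ and lives in the same ambient space $K^{nm}$. Hence, after deleting rows, $\mathcal{H}(G,H)$ is a submatrix of $\mathcal{H}(G',H')$: its rows are exactly those indexed by $E(G)\times E(H)\subseteq E(G')\times E(H')$.

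For (3), a set of rows of a matrix spans a subspace of the span of a superset of rows, so $\rk (\mathcal{H}(G,H)) \le \rk( \mathcal{H}(G',H'))$. This holds over any field.

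For (1) and (2), work over characteristic zero or larger than $2^{nm}$. By Proposition \ref{propB}, $(G,H)$ is independent exactly when the rows of $\mathcal{H}(G,H)$ are linearly independent over $K$ (equivalently, Remark \ref{rmk-depen}). Statement (1) then follows because a subset of a linearly independent set of vectors is linearly independent. The first part of (2) is the contrapositive of (1).

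For the quantitative part of (2), I would use the matroid nullity (corank) function $X\mapsto |X|-\rk(X)$ of the linear matroid $\mathcal{H}^{n,m}$. This function is monotone, which one sees as follows. Let $X\subseteq Y$, and extend a basis $B_X$ of $X$ to a basis $B_Y$ of $Y$. Then $|B_Y\setminus B_X|\le |Y\setminus X|$, which gives
\begin{equation*}
|X|-\rk(X)\le |Y|-\rk(Y).
\end{equation*}
Apply this with $X=E(G)\times E(H)$ and $Y=E(G')\times E(H')$; note $|X|=e(G)e(H)$ and $|Y|=e(G')e(H')$. This gives the displayed inequality. The characteristic assumption enters only when translating rank into analytic spread, so I expect no step here to be a real obstacle.
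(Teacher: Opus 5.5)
Your proposal is correct and follows the paper's argument. In both, the rows of $\mathcal{H}(G,H)$ are a subset of the rows of $\mathcal{H}(G',H')$, which gives (1) and (3) at once. Your nullity-monotonicity argument fills in the quantitative part of (2), which the paper only says ``follows similarly''; there, $B_Y\setminus B_X\subseteq Y\setminus X$ holds because $B_X$ is maximal independent in $X$. As you note, the characteristic hypothesis is used only to translate (in)dependence of the pair into linear (in)dependence of rows via Proposition \ref{propB}.
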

\begin{proof}
    By (\ref{hgh-df}), $\mathcal{H}(G',H')$ has the form
    \begin{equation*}
        \mathcal{H}(G',H')  = \begin{pmatrix}
            \mathcal{H}(G,H)\text{ }\text{ }0\\
            B
        \end{pmatrix}
    \end{equation*}
    for some matrix $B$. Thus if the rows of $\mathcal{H}(G',H')$ are linearly independent over $K$, then so are the rows of $\mathcal{H}(G,H)$, and (1) follows. (2) and (3) follow similarly from this decomposition.
\end{proof}
We now relate the dependence of a graph to the dependence of a pair.

\begin{lemma}
    Let $G$ and $H$ be graphs. If one of $G$ or $H$ is dependent, then so is $(G,H)$.
\end{lemma}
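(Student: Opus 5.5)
By Lemma \ref{lem-switch} we may assume $G$ is dependent. That means the rows of $\mathcal{H}_2(G)$ are linearly dependent over the fraction field, so $\ell(J_G)<e(G)$ by Theorem \ref{thm-hyp}. Since $H$ has no isolated vertices, it contains an edge $e'=\{k,l\}$ with $k<l$. Let $H_0\subset H$ be the subgraph consisting of this single edge. By Lemma \ref{lem-subgr}(2) it suffices to show that $(G,H_0)$ is dependent. Then $(G,H)$ is dependent as well, since $G\subset G$ and $H_0\subset H$ (we view $H_0$ as a graph on $[m]$, allowing isolated vertices in the matrix, which only adds zero columns).

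The key step is to identify the block of rows $\{H_{e,e'}\mid e\in E(G)\}$ of $\mathcal{H}(G,H)$. By Notation \ref{constr-pair}, for $e=\{i,j\}$ with $i<j$, the row $H_{e,e'}$ is supported only on the columns indexed by $x_{i,k},x_{i,l},x_{j,k},x_{j,l}$. Its entries there are $x_{j,l},-x_{j,k},-x_{i,l},x_{i,k}$. Discard all columns outside $\{x_{p,k},x_{p,l}\mid p\in[n]\}$, which are zero in these rows. What remains is exactly the $2$-hyperconnectivity matrix $\mathcal{H}_2(G)$ of Definition \ref{def:hypconn}, up to the substitution $x_{1,p}\mapsto x_{p,l}$, $x_{2,p}\mapsto -x_{p,k}$ and a permutation of columns (compare with the proof of Proposition \ref{prop-leaf3}, where $(A\ M)$ was recognized the same way).

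This substitution is an injective $F$-algebra map between polynomial rings in algebraically independent variables, sending distinct variables to $\pm$ distinct variables. It therefore extends to a field embedding, and linear dependence of rows is preserved. Hence $\rk \mathcal{H}(G,H_0)=\rk\mathcal{H}_2(G)<e(G)=e(G)e(H_0)$. By Proposition \ref{propB}, $\ell(J_{G,H_0})<e(G)e(H_0)$, i.e.\ $(G,H_0)$ is dependent.

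We then conclude with Lemma \ref{lem-subgr}(2). The main obstacle is only bookkeeping: checking that the signs and indices match Definition \ref{def:hypconn}. A sign change in a column does not affect rank, so a mismatch there is harmless. Alternatively, one can avoid this bookkeeping entirely by noting that $J_{G,H_0}\cong J_G$ (Remark \ref{rem:H=K_2}). This gives $\ell(J_{G,H_0})=\ell(J_G)<e(G)$ directly.
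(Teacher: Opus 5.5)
Your proof is correct and follows the paper's argument. For a fixed edge $e'=\{k,l\}$ of $H$, the rows $H_{e,e'}$ with $e\in E(G)$ form a copy of $\mathcal{H}_2(G)$ after renaming variables and discarding zero columns, so they are dependent. Your explicit substitution $x_{1,p}\mapsto x_{p,l}$, $x_{2,p}\mapsto -x_{p,k}$ does match entry for entry. The only difference is packaging: you pass through the single-edge subgraph $H_0$ and Lemma~\ref{lem-subgr}(2), whereas the paper notes directly that these rows are already dependent inside $\mathcal{H}(G,H)$ and invokes Remark~\ref{rmk-depen}.
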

\begin{proof}

    Without loss of generality, $G$ is dependent, i.e. $\mathcal{H}_2(G)$ has dependent rows. Write $E(G) = \{e_1,\ldots,e_r\}$. 
    Using (\ref{hgh-df}) and Remark~\ref{rem:H=K_2} we see that for an edge $e\in E(H)$, 
    by renaming variables, 
    $\mathcal{H}_2(G)$ 
    is a 
submatrix $(H_{e_1,e} \ldots H_{e_r,e})^t$ of $\mathcal{H}(G,H)$ where the rest of the entries in each of its rows are zeros. Thus, these rows in $\mathcal{H}(G,H)$ are dependent, thus $(G,H)$ is dependent by Remark \ref{rmk-depen}.
\end{proof}

Next we exhibit a simple example where both $G$ and $H$ are independent, but $(G,H)$ is dependent.

\begin{example}\label{ex-diamond}

    Let $G = P_3$ be the path on three vertices and let $H$ be the diamond (see Figure \ref{fig:four_graphs}). Then we compute in Macaulay 2 (over $F = \mathbb{Q}$) that $\ell(J_{G,H}) = 9< 10 = e(G)e(H)$ so that $(G,H)$ is dependent. However, $G$ and $H$ are independent by Corollary~\ref{cor-hyp}.
\end{example}
This example shows the a diamond obstructs independence in general, as follows.
\begin{proposition}\label{prop-diamond}
    Let $n,m\ge 3$ and let $G$ and $H$ be graphs on $[n]$ and $[m]$ respectively such that $G$ is connected and some component of $H$ contains a diamond subgraph. Then $(G,H)$ is dependent.
\end{proposition}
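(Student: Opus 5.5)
We reduce the statement to Example~\ref{ex-diamond}, which exhibits a dependent pair $(P_3,D)$ with $D$ the diamond, by means of the subgraph monotonicity of dependence, Lemma~\ref{lem-subgr}(2). The point is that dependence of a pair is witnessed by a linear dependence among rows of a submatrix of $\mathcal{H}(G,H)$. It therefore suffices to find a subgraph $G_0\subseteq G$ isomorphic to $P_3$ and a subgraph $H_0\subseteq H$ isomorphic to the diamond, viewed as graphs on $[n]$ and $[m]$ by adding isolated vertices.

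First, since $G$ is connected and $n\ge 3$, $G$ has a vertex of degree at least two: a connected graph on at least three vertices in which every vertex has degree at most one is impossible. Two edges at such a vertex span a copy $G_0\cong P_3$ in $G$. Second, by hypothesis some component of $H$ contains a diamond $H_0$. The connectivity of that component is not needed here; the hypothesis is phrased that way only so that it is compatible with Corollary~\ref{cor-comp2}.

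Next I must justify that $(G_0,H_0)$ is dependent when embedded in the larger vertex sets $[n]$ and $[m]$. Example~\ref{ex-diamond} computes $\ell(J_{P_3,D})=9<10$ on exactly $3$ and $4$ vertices. Adding isolated vertices does not change the ideal $J_{G_0,H_0}$, only the ambient ring. The variables not involved only contribute zero columns to $\mathcal{H}(G_0,H_0)$, so its rank is unchanged. By Proposition~\ref{propB}, under the standing characteristic hypothesis, the rows of $\mathcal{H}(G_0,H_0)$ are dependent over $K$. Remark~\ref{rmk-compute} lets me transfer the Macaulay2 computation over $\mathbb{Q}$ to any field of characteristic zero or larger than $2^{12}$. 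For the larger characteristics required here, $2^{nm}\ge 2^{12}$ holds automatically.

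Finally, Lemma~\ref{lem-subgr}(2) applied to $G_0\subseteq G$ and $H_0\subseteq H$ gives that $(G,H)$ is dependent, i.e.\ $\ell(J_{G,H})<e(G)e(H)$.

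The only delicate step is the base case, namely certifying that $(P_3,D)$ is dependent independently of the computer calculation and of the characteristic. To make this robust, I would exhibit an explicit nontrivial $K$-linear relation among the $10$ rows of $\mathcal{H}(P_3,D)$, a $10\times 12$ matrix, or equivalently show that all of its $10\times 10$ minors vanish. This should use the symmetry of the diamond: it is two triangles sharing the edge $\{1,2\}$, and one would find a relation supported on the rows indexed by the two edges of $P_3$ paired with the diamond's five edges. If an explicit relation is hard to find, I would rely on Example~\ref{ex-diamond} together with Remark~\ref{rmk-compute}, as above.
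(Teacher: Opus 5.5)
Your argument is the paper's proof. A connected $G$ on at least three vertices contains a $P_3$, and $H$ contains a diamond $D$. Example~\ref{ex-diamond} says $(P_3,D)$ is dependent, and Lemma~\ref{lem-subgr}(2) lifts this to $(G,H)$. Your remarks about isolated vertices contributing only zero columns, and about $nm\ge 12$ (since $m\ge 4$), are correct.

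\textbf{One added justification is wrong as stated.} Remark~\ref{rmk-compute} only says that $\ell(J_{G,H})$ agrees over two fields of the \emph{same} characteristic. It therefore cannot carry the Macaulay2 computation over $\mathbb{Q}$ to characteristic $p>2^{12}$.

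The transfer is still true, for an elementary reason. The $10\times 10$ minors of $\mathcal{H}(P_3,D)$ are polynomials with integer coefficients. If they vanish identically over $\mathbb{Q}$, they vanish modulo every prime, so the rank can only drop in positive characteristic.

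\textbf{A cleaner certificate for the base case.} You wanted to check the base case by hand; this is easiest via the right kernel rather than a relation among rows. Write $X_1,X_2,X_3$ for the rows of the generic $3\times 4$ matrix. Consider the three directions $\delta X\in K^{3\times 4}$ with rows $(X_2,0,0)$, $(0,0,X_2)$ and $(X_1,-X_2,X_3)$.
\begin{itemize}
\item The first two are row operations inside the pairs $\{1,2\}$ and $\{2,3\}$ respectively.
\item The third is the torus direction $\mathrm{diag}(1,-1,1)$.
\end{itemize}
Each has zero directional derivative on every generator $f_{e,e'}$, so all three lie in the right kernel of $\mathcal{H}(P_3,D)$. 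They are linearly independent: only the third has a nonzero second-row block, and rows $1$ and $3$ then separate the other two.

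Hence $\rk\mathcal{H}(P_3,D)\le 12-3=9<10$ over every field, with no computer input and no characteristic hypothesis. Since this argument never uses which column pairs occur, the same vectors give $\rk\mathcal{H}(P_3,H)\le 3|H|-3$ for every $H$. This matches $\ell(J_{K_m,P_3})=3m-3$ from Theorem~\ref{spread_kn_path}.
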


\begin{proof}
    Since $G$ is connected and $n\geq 3$, there is a copy of $P_3$ as a subgraph $P\subset G$ of $G$. Let $D\subset H$ be a diamond subgraph of $H$. By Example \ref{ex-diamond}, $(P,D)$ is dependent. Then by Lemma \ref{lem-subgr}, $(G,H)$ is dependent.
\end{proof}

\subsection{Some questions about analytic spread}

In this subsection, we give two examples which evidence particular formulae and an inequality for the analytic spread of binomial edge ideals.

Now we give an example which shows that for graphs $G$ and two trees $H$ and $H'$ on the same number of vertices, the analytic spread of $J_{G,H}$ and $J_{G,H'}$ can differ, and in fact, 
in a predictable way, see Conjecture~\ref{conj:path_is_best_tree}.

\begin{example}\label{example_big}
We compute the analytic spread of $J_{G,H}$ for various trees $G$ and several choices of $H$. The way the analytic spread varies follows a uniform pattern as we will formalize in Conjecture \ref{conj:path_is_best_tree}. We define six trees on six vertices and compare their analytic spreads against various graphs $H$. Let $G_1 = P_6$, let $G_2$ be the tree with edge set $E(G_2) = \{\{1,2\},\{2,3\},\{3,4\},\{4,5\},\{2,6\}\}$, take $G_3$ to be the tree with edge set $E(G_3) = \{\{1,2\},\{2,3\},\{3,4\},\{4,5\},\{3,6\}\}$, let $G_4$ be the tree whose edge set is $E(G_4) = \{\{1,2\},\{2,3\},\{3,4\},\{2,5\},\{2,6\}\}$, take $G_5$ to be the tree with edge set\\ $E(G_5) = \{\{1,2\},\{2,3\},\{3,4\},\{2,5\},\{3,6\}\}$, and finally take $G_6 = St_6$ to be the star of order six.
Let $H_1$ be the graph obtained by gluing two disjoint four-cycles along an edge. Let $H_2$ be a $K_4$ with a handle.

We compute the following analytic spreads using random matrices in Macaulay2 to compute the rank of $\mathcal{H}(G,H)$ over characteristic zero.

    \begin{center}
\begin{tabular}{||c || c | c | c | c | c | c ||} 
 \hline
 the graph $H$ & $\ell(J_{G_1,H})$ & $\ell(J_{G_2,H})$ & $\ell(J_{G_3,H})$ & $\ell(J_{G_4,H})$ & $\ell(J_{G_5,H})$ & $\ell(J_{G_6,H})$ \\ [0.5ex] 
 \hline\hline
 $C_3$              &15 &14 &14 &13 &13 &12 \\ 
 \hline
 $C_3$ and a leaf   &20 &19 &19 &18 &18 &17 \\
 \hline
  $C_4$              &20 &19 &19 &18 &18 &17 \\
 \hline
 $C_5$              &25 &25 &25 &25 &25 &24 \\   
 \hline
 $C_6$              &30 &30 &30 &30 &30 &29 \\   
 \hline
 a cricket          &25 &24 &24 &23 &23 &22 \\
 \hline
 a house            &27 &26 &26 &25 &25 &24 \\
 \hline
 a diamond          &21 &20 &20 &19 &19 &18 \\
 \hline
 $H_1$              &32 &31 &31 &30 &30 &29 \\
 \hline
 $H_2$              &27 &26 &26 &25 &25 &24 \\
 \hline
 $K_6$              &33 &32 &32 &31 &31 &30 \\
 \hline
\end{tabular}
\end{center}
\end{example}

Example \ref{example_big} leads to the following 
conjecture.
\begin{conjecture}\label{conj:path_is_best_tree}
    Let $n,m\geq 2$ be positive integers. Let $T_1$ and $T_2$ be trees on $n$ and $m$ vertices respectively. Fix $i\in V(T_1)$ and $j\in V(T_2)$. Let $G_1$ be the tree on $n+m+l$ vertices whose edge set is $E(T_1)\sqcup E(T_2)\sqcup \{\{i,m+n+1\},\{m+n+1,m+n+2\},\ldots,\{m+n+l,j\}\}$ and let $G_2$ be the tree on $n+m+l$ vertices 
    obtained from the disjoint union of $T_1$ and $T_2$  by identifying $i$ with $j$ and then add the "tail" of edges 
    $\{i,n+m+1\}, \{n+m+1,n+m+2\} ,\ldots,\{m+n+l,m+n+l+1\}$. Then for any 
    graph $H$,
    $$\ell(J_{G_1,H}) \ge \ell(J_{G_2,H}).$$
\end{conjecture}\label{question1}

\begin{tikzpicture}[
    blob/.style={draw, circle, minimum size=2.2cm, thick},
    dot/.style={fill, circle, inner sep=2pt}
]


\node[blob] (T1) at (0,0) {\Large $T_1$};
\node[blob] (T2) at (8.8,0) {\Large $T_2$}; 

\node[dot, label={75:$i$}] (i) at (T1.east) {};
\node[dot, label={below:{\small $m+n+1$}}] (n1) at (2.4,0) {};
\node[dot, label={above:{\small $m+n+2$}}] (n2) at (3.9,0) {};

\node[dot, label={below:{\small $m+n+l$}}] (nl) at (6.4,0) {};
\node[dot, label={105:$j$}] (j) at (T2.west) {};

\draw[thick] (i) -- (n1) -- (n2);
\draw[thick] (n2) -- (4.7,0);
\node at (5.15,0) {$\dots$};
\draw[thick] (5.6,0) -- (nl);
\draw[thick] (nl) -- (j);

\node (G1) at (4.4,-2) {\Large $G_1$};


\begin{scope}[shift={(12.7,0)}] 
    \node[draw, circle, minimum size=2.2cm, thick] (T1_G2) at (-1.1,0) {\Large $T_1$};
    \node[draw, circle, minimum size=2.2cm, thick] (T2_G2) at (1.1,0) {\Large $T_2$};

    \node[dot, label={[label distance=7pt]above:$i$}] (i_G2) at (0,0) {};

    \node[dot, label={[yshift=-2.5pt]right:{\small $m+n+1$}}] (n1_G2) at (0,-1.2) {};
    \node (dots_G2) at (0,-2.2) {$\vdots$};
    \node[dot, label=right:{\small $m+n+l$}] (nl_G2) at (0,-3.2) {};

    \draw[thick] (i_G2) -- (n1_G2);
    \draw[thick] (n1_G2) -- (0,-1.7);
    \draw[thick] (0,-2.7) -- (nl_G2);

    \node (G2) at (0,-4.5) {\Large $G_2$};
\end{scope}


\coordinate (BoxSW) at ($(current bounding box.south west)+(-5pt,-5pt)$);
\coordinate (BoxNE) at ($(current bounding box.north east)+(5pt,5pt)$);

\node[anchor=south west, xshift=10pt, yshift=10pt] at (BoxSW) {\large{The graphs of Conjecture \ref{conj:path_is_best_tree}}};

\draw[thin] (BoxSW) rectangle (BoxNE);

\end{tikzpicture}

This conjecture implies that $\ell(J_{P_n,H}) \ge \ell(J_{T,H})$ for every $n$-vertex tree $T$ and every graph $H$ (recall that $P_n$ is the path on $n$ vertices).

Computations of the    
    analytic spreads for pairs of cycles, using random matrices in Macaulay2 to compute the rank of $\mathcal{H}(G,H)$ suggest the following.

\begin{conjecture}\label{question-cycle}
    Let $n,m\geq 3$ be positive integers. 
    Then if $F$ has characteristic zero
    \begin{equation*}
        \ell(J_{C_n,C_m}) =
        \begin{cases}
            nm \text{ }& \text{ $n$ and $m$ are odd}\\
            nm - 1 & \text{ exactly one of $n$ or $m$ is even}\\
            nm-2 & \text{ $n$ and $m$ are even}.
        \end{cases}
    \end{equation*}
    This identity implies that $(C_n,C_m)$ is a basis if and only if $n$ and $m$ are odd.
\end{conjecture}

Note that Conjectures~\ref{conj:path_is_best_tree} and  \ref{question-cycle} together with  Theorem~\ref{thmA-2}(4) 
and the fact that $\ell(J_{P_n,K_m})=nm-3$ for $n,m\ge 3$ by  \cite[Corollary 3.6 (d)]{Binom_gen_paths} 
would settle Conjecture~\ref{conj:odd_cycles_basis} on a characterization of basis pairs of graphs.

\section{Proof of Theorem \ref{thmA} (2)}\label{sec-7:prfA_pt1}

In this section we complete the proof of Theorem \ref{thmA} by proving part (2), which states that for a unicycle graph $H$, $J_{P_n,H}$ has the maximum possible analytic spread of $(n-1)|H|$. We begin by first proving a sequence of two technical lemmas, accompanied by some remarks. Recall that we have fixed an arbitrary field $F$. Throughout this section, also fix positive integers $n\geq 4$ and $m\geq 3$. Recall that we denote $S = F[x_{i,j}\mid 1\leq i\leq n, 1\leq j\leq m]$ and we take $K$ to be the quotient field of $S$. Let $N = (n-1)m$. We begin with a series of lemmas which analyze the matrix $\mathcal{H}(P_n,C_m)$. The following lemma is clear by the permutation expansion of the determinant and will be indispensable to our proof.

\begin{lemma}\label{lem-det}
    Let $M\in S^{N\times N}$ be a matrix and write $M = (m_{i,j})_{1\leq i\leq N,1\leq j\leq N}$. Suppose that
    \begin{enumerate}
        \item[(1)] every entry of $M$ lies in $\{0\}\cup \{\pm x_{i,j}\mid 1\leq i\leq n, 1\leq j\leq m\}$, and 
        \item[(2)] there exists a monomial $P\in S$ for which there is a unique permutation $\sigma$ of $[N]$ having the property that
        \begin{equation*}
            \prod_{i=1}^N m_{i,\sigma(i)} \in \{P,-P\}.
        \end{equation*}
    \end{enumerate}
    Then $M$ is invertible.
\end{lemma}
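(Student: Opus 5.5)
We will show that $\det M \neq 0$ in $S$. Since $S$ is a domain contained in its fraction field $K$, this is exactly the statement that $M$ is invertible over $K$.

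The plan is to expand the determinant by the Leibniz formula,
\begin{equation*}
\det M = \sum_{\tau\in \mathfrak{S}_N} \operatorname{sgn}(\tau)\prod_{i=1}^N m_{i,\tau(i)},
\end{equation*}
and then isolate the coefficient of the monomial $P$ from hypothesis (2).

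By hypothesis (1), each product $\prod_i m_{i,\tau(i)}$ is either $0$ or $\pm$ a monomial with coefficient $\pm 1$. Group the permutations according to the monomial their product yields, up to sign. The coefficient of $P$ in $\det M$ is then the sum of $\operatorname{sgn}(\tau)\cdot(\pm 1)$ over those $\tau$ whose product lies in $\{P,-P\}$.

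By hypothesis (2) exactly one permutation, namely $\sigma$, contributes to this sum. So the coefficient of $P$ is $\operatorname{sgn}(\sigma)\cdot(\pm1) = \pm 1$. This is nonzero in any field $F$, whatever its characteristic, so no cancellation can occur. Hence $\det M$ is a nonzero polynomial, and $M$ is invertible over $K$.

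There is no real obstacle here. The one point needing care is that the coefficients are $\pm1$ rather than arbitrary scalars. This is what makes the argument independent of the characteristic, and it is why Theorem \ref{thmA}(2) can be stated over any field, combined with Lemma \ref{lem-ineq-any_characteristic}.
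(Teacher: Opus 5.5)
Your proof is correct and is the same argument the paper has in mind. The paper only remarks that the lemma is clear from the permutation expansion of the determinant. You have written that expansion out: the coefficient of $P$ in $\det M$ comes from $\sigma$ alone, so it is $\pm 1$, and hence $\det M\neq 0$ in $S\subset K$ over any field.
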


Now we introduce some additional notation for the rest of this section. Let $\mathcal{H} = \mathcal{H}(P_n,C_m)$ and for $1\leq i\leq nm$ let $\mathcal{H}^i$ be the $i$-th column of $\mathcal{H}$. Let $M = (\mathcal{H}^2 \text{ }\mathcal{H}^3\ldots \mathcal{H}^{(n-1)m+1})$. That is, $M$ is the submatrix of $\mathcal{H}$ obtained by removing its first column and last $m-1$ columns.

The entries of $M$ are then given by the convention (\ref{hgh-df}) for $\mathcal{H}(P_n,C_m)$. For the square matrix $M\in S^{N\times N}$ write $M = (m_{i,j})_{1\leq i\leq N,1\leq j\leq N}$. Consider the monomial of degree $N$
\begin{equation*}
    P = \bigg(\prod_{2\leq i\leq n, 1\leq j\leq m-1}(-x_{i,j})\bigg)\bigg(\prod_{1\leq i\leq n-1}-x_{i,m}\bigg).
\end{equation*}
and the permutation $\sigma$ of $[N]$ defined by
\begin{equation*}
    \sigma(i) = \begin{cases}
        i-1\text{ }& i-1\neq 0\text{ modulo $m$}\\
        i+m-1 \mod N &\text{otherwise}.
    \end{cases}
\end{equation*}


The next remark describes a convenient block matrix expression for $\mathcal{H}$ by size $m$ square blocks, which follows from (\ref{hgh-df}) and will be useful in our analysis on permutations of the columns of $M$, since $M = (\mathcal{H}^2 \text{ }\mathcal{H}^3\ldots \mathcal{H}^{(n-1)m+1})$. Therein, we shall apply a specific column permutation $\lambda$ to $\mathcal{H}$ to obtain a convenient expression for $\mathcal{H}$.

\begin{remark}\label{rmk-M}We have the following statements regarding the matrix $\mathcal{H}$.
    \begin{enumerate}
        \item[(i)] For $1\leq i\leq n$ we define the following
size $m$ square matrices
    \begin{equation*}
        Q_i := Q_{m,i} = \begin{pmatrix}
             x_{i,m}\text{ }&\text{ }&\text{ }&\text{ }&\text{ }& -x_{i,1} \\
            x_{i,2} &-x_{i,1} \text{ }&\text{ }&\text{ }&\\
            & x_{i,3} &-x_{i,2}&&\\
            &&\ddots\text{ }&\text{ }&\\
            &&&x_{i,m-1}& -x_{i,m-2}\\
            &&&&x_{i,m}&-x_{i,m-1}
        \end{pmatrix}\in K^{m\times m}.
    \end{equation*}
    Consider the permutation $\lambda\in S_{mn}$ given by $\lambda(j) = j-1$ if $m\nmid j-1$ and $\lambda(j) = j+m-1$ otherwise. $\lambda$ is simply the switching of the first and last columns of each $mn\times m$ block of $\mathcal{H}$, or equivalently switching the first and last column of each matrix $Q_i$. 
    $\mathcal{H}$ admits the following expression due to (\ref{hgh-df}), after applying the permutation $\lambda$ to the columns of $\mathcal{H}$
    \begin{equation*}
        \mathcal{H}= \mathcal{H}(P_n,C_m)
        =
        \begin{pmatrix}
            Q_2 \text{ }& -Q_1 \text{ }&  \text{ }&  \text{ }&  \text{ }&  \text{ }& \\
              &Q_3&  -Q_2\\
              &&Q_4& -Q_3\\
             &&&Q_5& -Q_4\\
             &&&&&\ddots\\
             &&&&&Q_n& -Q_{n-1}
        \end{pmatrix}
        \in K^{(n-1)m\times nm}.
    \end{equation*}
    \item[(ii)] In particular, the monic parts of the monomials in each row of $M$ are distinct. Thus if $\tau$ is a permutation of $[N]$ such that $\prod_{i=1}^Nm_{i,\tau(i)}$ is nonzero (or equivalently, $m_{i,\tau(i)}\neq 0$ for all $i$), then if for some $i$, $m_{i,\tau(i)} = m_{i,\sigma(i)}$, we must have $\tau(i) = \sigma(i)$.
    \end{enumerate}
\end{remark}

We obtain the following statement using Remark \ref{rmk-M}.

\begin{remark}\label{rmk-sigma}
By construction of $\sigma$, $M$, and $\mathcal{H}(P_n,C_m)$, the monomial $P$ above equals 
    \begin{equation*}
        P = \prod_{i=1}^N m_{i,\sigma(i)}.
    \end{equation*}
\end{remark}

Remark \ref{rmk-sigma} in part states that there is a permutation $\tau$ for which $\prod_{i=1}^N m_{i,\tau(i)}\in\{P,-P\}$, namely the permutation $\sigma$. We shall prove that $\sigma$ is the only permutation for which this happens.


\begin{lemma}\label{lem-sigma}
    $\sigma$ is the unique permutation $\tau$ of $[N]$ for which $\prod_{i=1}^N m_{i,\tau(i)} \in \{P,-P\}$.
\end{lemma}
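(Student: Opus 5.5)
We argue by a variable-by-variable accounting of the monomial $P$: every variable of $P$ must be produced by the row–column pairs of any competing permutation $\tau$, and we show this forces $\tau=\sigma$. First record the structure of $M$ from Remark~\ref{rmk-M}(i). The rows of $\mathcal{H}$ come in $n-1$ blocks of $m$ rows, one for each edge $\{i,i+1\}$ of $P_n$. Block $i$ has nonzero entries only in the column blocks $i$ and $i+1$, which carry $Q_{i+1}$ and $-Q_i$ respectively. Deleting the first column and the last $m-1$ columns of $\mathcal{H}$ produces $M$. Consequently:
\begin{itemize}
\item each column of $M$ meets at most two row blocks;
\item each entry of $M$ is $\pm$ a single variable;
\item the variable $x_{a,b}$ occurs only in rows of block $a-1$ (via $Q_a$) and block $a$ (via $-Q_a$).
\end{itemize}

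Now suppose $\tau$ is a permutation with $\prod_i m_{i,\tau(i)}=\pm P$. Since the entries are single variables, the multiset $\{m_{i,\tau(i)}\}$ must coincide, up to sign, with the multiset of variables of $P$. By definition of $P$, these are:
\begin{itemize}
\item $x_{i,j}$ for $2\le i\le n$ and $1\le j\le m-1$;
\item $x_{i,m}$ for $1\le i\le n-1$;
\item none of the variables $x_{1,j}$ with $j<m$, and not $x_{n,m}$.
\end{itemize}
Each variable appears exactly once, so the count of available occurrences is tight.

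The plan is an induction over row blocks, going from the last block $n-1$ back to block $1$, or alternatively from the first block forward, whichever is easier to start. In block $n-1$, the variables $x_{n,j}$ with $j\le m-1$ can occur only in the $Q_n$ part of that block, since block $n$ does not exist. These $m-1$ variables must therefore be supplied by $m-1$ distinct rows of block $n-1$ using columns from column block $n-1$. Remark~\ref{rmk-M}(ii) says the monic monomials within a row are distinct, so once the variable used in a row is fixed, the column $\tau(i)$ is determined. The next step is to check, directly from the shape of $Q_n$ (a bidiagonal matrix with a wraparound entry in its first row), that there is only one way to assign $x_{n,1},\dots,x_{n,m-1}$ to distinct rows. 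This assignment agrees with $\sigma$, because each $x_{n,j}$ appears in exactly two rows, and the choices propagate along the cycle once we use that $x_{n,m}$ is absent. The single remaining row of block $n-1$ must then carry a variable $x_{n-1,\ast}$, coming from $-Q_{n-1}$. Its column is forced by which columns are still free.

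We then pass to block $n-2$. The variables $x_{n-1,j}$ not yet used must come from the $Q_{n-1}$ part of block $n-2$, and the same cycle-propagation argument forces the choices there. Repeating this down to block $1$, where the missing first column and the absence of $x_{1,j}$ for $j<m$ pin everything, gives $\tau=\sigma$ on every row. Uniqueness of column given variable within a row, from Remark~\ref{rmk-M}(ii), converts equality of the entries $m_{i,\tau(i)}=m_{i,\sigma(i)}$ into $\tau(i)=\sigma(i)$.

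The main obstacle is the local step: within one block, showing that the cyclic structure of $Q_i$ admits exactly one matching of rows to the prescribed $m-1$ variables that also respects the columns already consumed by the previous block. The hard part is the wraparound entry. I would handle it by following the two-element occurrence sets of each $x_{i,j}$ around the cycle, starting from the variable excluded by $P$, namely $x_{i,m}$ in the appropriate block, which breaks the cycle into a path. A path admits a unique perfect matching of the required type, and that breaking of the cycle is what produces uniqueness. I also need to check the boundary effects carefully: the deleted first column and the deleted last $m-1$ columns change which entries are available in blocks $1$ and $n-1$.
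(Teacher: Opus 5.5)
\textbf{There is a genuine gap}, and it sits exactly in the local step you flag as the main obstacle: the mechanism you propose does not give uniqueness. Write $(b,j)$ for the column of $\mathcal{H}$ indexed by $x_{b,j}$. Inside $Q_{a+1}$ (row block $a$), the variable $x_{a+1,j}$ with $1\le j\le m-1$ occurs exactly in rows $j$ and $j+1$. So discarding $x_{a+1,m}$ does turn the cycle into a path on the $m$ rows, with edges $x_{a+1,1},\dots,x_{a+1,m-1}$. But a path with $m$ vertices and $m-1$ edges has exactly $m$ injective edge-to-endpoint assignments, one for each choice of uncovered row $r$: edges left of $r$ go left, edges right of $r$ go right. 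There is no perfect matching here, and nothing propagates from the missing edge. Hence your claim that $x_{n,1},\dots,x_{n,m-1}$ can be placed in block $n-1$ in only one way is false as stated. Also, for $a<n-1$ the variable $x_{a+1,m}$ is not excluded by $P$ at all; it is unavailable only because row $1$ of block $a+1$ has already used it.

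The forcing actually comes from column injectivity and from the boundary, and your local step never uses either.

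\emph{Interior $r$.} In $Q_{a+1}$, the entry $-x_{a+1,1}$ of row $1$ and the entry $-x_{a+1,m-1}$ of row $m$ lie in the same wraparound column $(a,m)$. So every uncovered row $2\le r\le m-1$ uses that column twice, which leaves $r\in\{1,m\}$.

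\emph{The case $r=m$.} Here you must show that the leftover row $m$ has no admissible entry. Its $Q_{a+1}$-entries are $x_{a+1,m}$ and $x_{a+1,m-1}$, both unavailable. Its $-Q_a$-entries lie in columns $(a+1,m-1)$ and $(a+1,m)$. These are occupied by the already settled block $a+1$, or deleted from $M$ when $a=n-1$.

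So it is the feasibility of the leftover row that selects the assignment. It is not the other way round, as your sentence ``its column is forced by which columns are still free'' suggests.

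With these two observations added, your backward induction does go through, including block $1$, and would be a legitimate alternative. The paper instead runs forward from block $1$. There the deleted first column and the absence of $x_{1,j}$ for $j<m$ leave row $2$ with the single admissible entry $x_{2,1}$. That gives a genuinely forced start, which Remark~\ref{rmk-perm_facts} (I)--(III) then propagate.
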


Before proving Lemma \ref{lem-sigma}, we prove one additional remark, which states a series of restrictions on permutations $\tau$ satisfying the condition of Lemma \ref{lem-sigma}.

\begin{remark}\label{rmk-perm_facts}
   Let $\tau$ be a permutation of $[N]$ for which $\prod_{i=1}^N m_{i,\tau(i)} \in \{P,-P\}$. Then the following statements hold.
    \begin{enumerate}
        \item[(I)] $m_{i,\tau(i)}\neq \pm m_{j,\tau(j)}$ whenever $i\neq j$.
        \item[(II)] $m_{i,\tau(i)}\notin \{\pm x_{1,j}\mid 1\leq j\leq m-1\}$ for $1\leq i\leq N$.
        \item[(III)] If $1\leq j\leq N$ and for some $i\in\{1,\ldots,N\}$ we have $m_{i,\tau(i)} = m_{i,j}$, then for $i'>i$, we have $m_{i',\tau(i')}\neq m_{i',j}$. That is, the set $\{m_{i,\tau(i)}\mid 1\leq i\leq N\}$ only intersects the entries of a given column of $M$ once.
        \item[(IV)] $m_{i,\tau(i)}\neq 0$ for $1\leq i\leq N$. 
    \end{enumerate}
\end{remark}

\begin{proof}
    (I) follows from the assumption that $\prod_{i=1}^N m_{i,\tau(i)} \in \{P,-P\}$, and the fact that $P$ is square-free. (II) holds since no variable of the form $x_{1,j}$, $1\leq j\leq m-1$ divides $P$. (III) follows from the fact that $\tau$ is a permutation, and is consequently injective. (IV) follows from 
    the fact that $P,-P\neq 0$.
\end{proof}

Now we are ready to prove Lemma \ref{lem-sigma}.

\begin{proof}[proof of Lemma \ref{lem-sigma}] Note that, since $(m_{i,j})_{1\leq i,j\leq N} = M = (\mathcal{H}^2 \mathcal{H}^3\ldots \mathcal{H}^{(n-1)m+1})$, $m_{i,j}$ lives in the $i$-th row and $(j+1)$-st column of $\mathcal{H}$. This fact, in combination with Remark \ref{rmk-M} (i) will be important throughout the proof to keep track of the possible values of $m_{i,\tau(i)}$ for each $i$, for a permutation $\tau$. Suppose that $\tau$ is a permutation of $[N]$ such that $\prod_{i=1}^N m_{i,\tau(i)} \in \{P,-P\}$. We will show that $\tau = \sigma$ by induction on positive integers $v$ for which $\tau(i) = \sigma(i)$ for $i\leq vm$. Note that by the Remark \ref{rmk-M} (ii) it is enough to show that $m_{i,\tau(i)} = m_{i,\sigma(i)}$ for all $i$. The $v=1$ case has a slightly different proof to the others, so we first prove the $v=1$ and $2$ cases.

Now we proceed by analyzing the possible values of $m_{i,\tau(i)}$ for each $i$. By Remark \ref{rmk-M} (i) and the construction of $M$, we have $m_{1,\tau(1)}\in \{x_{2,1},-x_{1,m},-x_{1,1}\}$, while by Remark \ref{rmk-perm_facts} (II), $m_{1,\tau(1)}\neq -x_{1,1}$.  If $m_{1,\tau(1)} = x_{2,1}$, then by Remark \ref{rmk-M} (i), $m_{2,\tau(2)}\in \{-x_{2,1},-x_{1,2},x_{1,1}\}$, contradicting Remark \ref{rmk-perm_facts} (I) and (II). Thus, $m_{1,\tau(1)} = -x_{1,m} = m_{1,m} = m_{1,\sigma(1)}$.

By Remark \ref{rmk-M} (i), $m_{2,\tau(2)}\in \{-x_{2,1},-x_{1,2},x_{1,1}\}$.  Then by Remark \ref{rmk-perm_facts} (II), $m_{2,\tau(2)}$ is forced to equal $-x_{2,1} = m_{2,1} = m_{2,\sigma(2)}$. Now we will show that $m_{i,\tau(i)} =  m_{i,\sigma(i)}$ for $i = 3,\ldots,m$ by induction. If $3\leq k\leq m$ and $m_{i,\tau(i)} = m_{i,\sigma(i)}$ for $i< k$, then by Remark \ref{rmk-M} (i) and Remark \ref{rmk-perm_facts} (II), and (III), we must have that $m_{k,\tau(k)} = m_{k,\sigma(k)}$ (the other options are either in the same row as $m_{i,\tau(i)}$ for some $i<k$ or are nondivisors of $P$). Now we have shown that, $m_{i,\tau(i)} = m_{i,\sigma(i)}$ for $1\leq i\leq m$.

So far we have proven the $v=1$ base case of our induction. Next we prove the $v = 2$ case. Firstly, by Remark \ref{rmk-M} (i), we have that $m_{m+1,\tau(m+1)} \in \{x_{3,m},x_{3,1},-x_{2,m},-x_{2,1}\}$. By Remark \ref{rmk-perm_facts} (I) and the fact that $m_{2,\tau(2)} = -x_{2,1}$, we cannot have $m_{m+1,\tau(m+1)} = -x_{2,1}$. Also $m_{m+1,\tau(m+1)}\neq x_{3,1}$, otherwise, the only possible values of $m_{3,\tau(3)}$ would contradict Remark \ref{rmk-perm_facts} (I). In addition, Remark \ref{rmk-perm_facts} (III) and $m_{1,\tau(1)} = m_{1,m}$ implies that $m_{m+1,\tau(m+1)}\neq x_{3,m}$. Hence, $m_{m+1,\tau(m+1)} = -x_{2,m} = m_{m+1,2m} = m_{m+1,\sigma(m+1)}$.

Next we assess the value of $m_{m+2,\tau(m+2)}$. Note that by Remark \ref{rmk-perm_facts} (I) implies that $m_{i,\tau(i)}\notin\{\pm x_{2,i}\mid 1\leq i\leq m\}$ for $i>m+1$. Thus the only possible values of $m_{m+2,\tau(m+2)}$ are $x_{3,2}$ and $-x_{3,1}$. On the other hand, $m_{m+2,\tau(m+2)}\neq m_{m+2,m} =x_{3,2}$ by Remark \ref{rmk-perm_facts} (III) and the fact that $m_{1,\tau(1)} = m_{1,m}$. Thus $m_{m+2,\tau(m+2)} = -x_{3,1} = m_{m+2,m+2} = m_{m+2,\sigma(m+2)}$. Now by induction on $i$ and the fact that $m_{i,\tau(i)}\notin\{\pm x_{2,i}\mid 1\leq i\leq m\}$ for $i>m+1$, Remark \ref{rmk-perm_facts} forces that $m_{i,\tau(i)} = m_{i,\sigma(i)}$ for $m+3\leq i\leq 2m$. This completes the proof of the $v=2$ case.

Assume that $v\geq 2$ and $m_{i,\tau(i)} = m_{i,\sigma(i)}$ for $1\leq i\leq vm$. Now we shall prove that $m_{i,\tau(i)} = m_{i,\sigma(i)}$ for $vm+1\leq i\leq (v+1)m$. We are assuming in part that
\begin{equation*}
    m_{(v-1)m+1,\tau((v-1)m+1)} = m_{(v-1)m+1,\sigma((v-1)m+1)} = m_{(v-1)m+1,vm} = -x_{v,m},
\end{equation*}
while $m_{(v-1)m+1,vm} = -x_{v,m}$ is in the same column of $M$ as the $x_{v+2,m}$ in the $(vm+1)$-th row of $M$. Then Remark \ref{rmk-perm_facts} (III) implies that $m_{vm+1,\tau(vm+1)}\neq x_{v+2,m}$. On the other hand, our induction hypothesis also implies that
\begin{equation*}
    m_{(v-1)m+2,\tau((v-1)m+2)} = m_{(v-1)m+2,(v-1)m+2} = -x_{v+1,1}
\end{equation*}
so that $m_{vm+1,\tau(vm+1)}\neq x_{v+1,1}$ (by Remark \ref{rmk-perm_facts} (I)). Now using Remark \ref{rmk-M} (i) we are only left with the possibility
\begin{equation*}
    m_{vm+1,\tau(vm+1)}\in \{x_{v+2,1},-x_{v+1,m}\}.
\end{equation*}
If $m_{vm+1,\tau(vm+1)} = x_{v+2,1}$, then (using Remark \ref{rmk-perm_facts} (III) again)\\ $m_{vm+2,\tau(vm+2)}\in \{-x_{v+2,1},-x_{v+1,2},x_{v+1,1}\}$, contradicting Remark \ref{rmk-perm_facts} (I), since our induction hypothesis implies that there are $i<vm+2$ such that $m_{i,\tau(i)}$ take values $\pm x_{v+1,j}$ for $j=1,2$. Thus $m_{vm+1,\tau(vm+1)} = -x_{v+1,m}$.

Note that our induction hypothesis implies that for $1\leq j\leq m-1$ there is a $k$ such that $m_{k,\tau(k)}\in \{\pm x_{v+1,j}\}$ (namely $k := (v-1)m+j+1$). Now by Remark \ref{rmk-perm_facts} (I) and induction on $j\in \{2,\ldots, m\}$, we have $m_{vm+j,\tau(vm+j)} = x_{v+2,j} = m_{vm+j,\sigma(vm+j)}$ for $j = 2,\ldots,m$. So we have proven that $m_{i,\tau(i)} = m_{i,\sigma(i)}$ for $vm+1\leq i\leq (v+1)m$. This completes our induction on $v$. Thus, $\tau(i) = \sigma(i)$ for $1\leq i\leq (n-1)m = N$. As in, $\tau = \sigma$,  completing the proof.
\end{proof}

\begin{proof}[proof of Theorem \ref{thmA} (2)]

    By Lemma \ref{lem-det} and Lemma \ref{lem-sigma}, the matrix $M$ is invertible. Thus, $\mathcal{H} = \mathcal{H}(P_n,C_m)$ contains an invertible submatrix of size $(n-1)m$ and so has rank at least $(n-1)m$. On the other hand, $\mathcal{H}(P_n,C_m)$ is an $(n-1)m$ by $nm$ matrix and so its rank does not exceed $(n-1)m$. Hence, $\rk (\mathcal{H}(P_n,C_m))=(n-1)m$. 
Then by Lemma~\ref{lem-ineq-any_characteristic}, 
$\ell(J_{P_n,C_m}) \ge \rk (\mathcal{H}(P_n,C_m)) = (n-1)m = |E(P_n)||E(C_m)|$. However, clearly the transcendence degree of $F\subset F(f_{e,e'}\mid e\in E(P_n), e'\in C_m)$ does not exceed the number of generators hence also $\ell(J_{P_n,C_m}) \le |E(P_n)||E(C_m)|$, completing the proof.
\end{proof}

\section{Proof of Theorem \ref{thmA-2} (1)}\label{sec-8:prfA_part2}

The goal of this section is to prove Theorem \ref{thmA-2} (1), which states that for a pair of connected graphs $(G,H)$ on at least three vertices such that one graph contains a star of larger order than some cycle in the other graph, we have $\ell(J_{G,H})<|E(G)||E(H)|$.

Using 
Corollary \ref{corr-mon} and Lemma \ref{lem-switch} 
we reduce the problem to the case when $G$ is an $n$-star and $H$ is an $(n-1)$-cycle, which we prove in Lemma \ref{lem-star}. 
To prove this lemma, we describe an explicit large enough subspace of the right kernel of $\mathcal{H}(G,H)$, which then implies that its left kernel is nonzero, namely that $(G,H)$ is dependent.

\begin{lemma}\label{lem-star}
    Let $n\geq 4$ be a positive integer. Let $G = St_n$ be the star graph of order $n$ and let $H = C_{n-1}$ be the cycle of order $n-1$. Then the following holds:
    \begin{enumerate}
        \item[(a)]  $\rk (\mathcal{H}(G,H)) \leq e(G)e(H) - 1$, and 
        \item[(b)] if $n$ is odd, then $\rk (\mathcal{H}(G,H)) \leq e(G)e(H) - 2$.
    \end{enumerate}
\end{lemma}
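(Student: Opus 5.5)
The plan is to bound the rank of $\mathcal{H}(G,H)$ by producing many linearly independent vectors in its right kernel over $K$.

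\textbf{Dimension count.} Let $V(G)=[n]$ with center $1$, so $E(G)=\{\{1,j\}\mid 2\le j\le n\}$. Let $V(H)=[n-1]$ with $E(H)=\{\{k,k+1\}\}$, indices taken modulo $n-1$. Then $\mathcal{H}(G,H)$ has $e(G)e(H)=(n-1)^2$ rows and $n(n-1)$ columns. By rank--nullity:
\begin{itemize}
\item (a) is equivalent to the right kernel having dimension at least $n(n-1)-(n-1)^2+1=n$;
\item (b) is equivalent to the right kernel having dimension at least $n+1$.
\end{itemize}
A vector $v=(v_{i,k})\in K^{nm}$ lies in the right kernel exactly when $\sum_{i,k}v_{i,k}\,\partial f_{e,e'}/\partial x_{i,k}=0$ for every $(e,e')\in E(G)\times E(H)$. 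So I will take $v$ to be the infinitesimal generator of a one-parameter family of matrix transformations $X\mapsto X(t)$ that leaves every $f_{e,e'}$ with $(e,e')\in E(G)\times E(H)$ unchanged. Differentiating $f_{e,e'}(X(t))$ at $t=0$ then gives the kernel condition. As a check, I will also verify each vector directly against the explicit row form of $H_{e,e'}$ in Notation~\ref{constr-pair}.

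\textbf{Kernel vectors for (a).}
\begin{itemize}
\item \emph{Row shears.} For each $j\in\{2,\dots,n\}$, add $t$ times row $1$ to row $j$ of the generic matrix $X=(x_{i,k})$. Every $2\times 2$ minor on rows $\{1,j\}$ is unchanged. Row $j$ occurs in no other edge of the star, so all generators are fixed. The generator $u_j$ has $(u_j)_{j,k}=x_{1,k}$ and zeros elsewhere. This gives $n-1$ vectors.
\item \emph{Row scaling.} Scale row $1$ by $t$ and rows $2,\dots,n$ by $t^{-1}$. Each minor on rows $\{1,j\}$ is fixed. The generator $w$ has $w_{1,k}=x_{1,k}$ and $w_{j,k}=-x_{j,k}$ for $j\ge 2$.
\end{itemize}

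\textbf{Kernel vector for (b).} When $n$ is odd, the cycle $C_{n-1}$ is even. Then I scale column $k$ by $t^{(-1)^k}$; this is consistent around the cycle precisely because its length is even. Each minor on columns $\{k,k+1\}$ picks up the factor $t^{(-1)^k}t^{(-1)^{k+1}}=1$, so all generators are fixed. The generator $z$ has $z_{i,k}=(-1)^k x_{i,k}$.

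\textbf{Linear independence over $K$.} This is the step needing care.
\begin{itemize}
\item The $u_j$ have pairwise disjoint supports (the row blocks $j$), and none of them touches row block $1$. So they are independent.
\item Row block $1$ is nonzero only for $w$ and $z$. Restricted there, $w$ gives $(x_{1,1},\dots,x_{1,n-1})$ and $z$ gives $((-1)^kx_{1,k})_k$. These are non-proportional for $n-1\ge 3$.
\item Hence in any linear relation the coefficients of $w$ and $z$ must vanish, and then the coefficients of the $u_j$ vanish too.
\end{itemize}
So $\{u_2,\dots,u_n,w\}$ gives nullity at least $n$, proving (a). When $n$ is odd, adding $z$ gives nullity at least $n+1$, proving (b).

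The main obstacle is choosing the symmetries correctly: they must respect only the graph-restricted generators, not all minors. The point to double-check is that the column scaling fixes every generator only when the cycle length $n-1$ is even, which is exactly the parity hypothesis in (b).
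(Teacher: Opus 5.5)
Your proof is correct and is essentially the paper's: your $u_j$ and $w$ are exactly the paper's $u_i$ and $A$. For (b), your column-scaling vector $z$ together with $w$ spans the same space as the paper's $C$ and $D$ (indeed $C=(w+z)/2$ and $D=(w-z)/2$), and your disjoint-support independence check replaces the paper's specialization $\phi$.

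There are two harmless slips. First, the scaling families should be differentiated at $t=1$, not $t=0$; your direct verification against the rows covers this. Second, the non-proportionality of $w$ and $z$ on row block $1$ depends on $\text{char}(F)\neq 2$ rather than on $n-1\ge 3$, and this is guaranteed by the paper's standing characteristic assumption.
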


\begin{proof}
    For $1\leq i\leq n$ and $3\leq r\leq n-1$, consider the matrix
    \begin{equation*}
        B_{r,i} = \begin{pmatrix}
            x_{i,2} \text{ }& -x_{i,1}\text{ }&\text{ }&\text{ }&\text{ }& \\
            & x_{i,3} &-x_{i,2} \text{ }&\text{ }&\text{ }&\\
            &&&\ddots\text{ }&\text{ }&\\
            &&&&x_{i,r}& -x_{i,r-1}\\
            x_{i,r}&&&&&-x_{i,1}
        \end{pmatrix}\in K^{r\times r}.
    \end{equation*}
    And take $B_i = B_{n-1,i}$ for $1\leq i\leq n$. Using (\ref{hgh-df}) we compute that
    \begin{equation*}
        \mathcal{H}:= \mathcal{H}(G,H)
        =
        \begin{pmatrix}
            B_2 \text{ }& -B_1 \text{ }&  \text{ }&  \text{ }&  \text{ }&  \text{ }& \\
            B_3  &&  -B_1\\
            B_4  &&& -B_1\\
            B_5 &&&& -B_1\\
            \vdots &&&&&\ddots\\
            B_n &&&&&& -B_1
        \end{pmatrix}
        \in K^{(n-1)^2\times n(n-1)}.
    \end{equation*}
Now we shall construct vectors which we will show are in the right kernel of $\mathcal{H}$. Let
\begin{equation}
    \alpha = \begin{pmatrix}
        x_{1,1}\\ x_{1,2}\\ \vdots \\ x_{1,n-1}
    \end{pmatrix}
\end{equation}
let $z = (0 \ldots 0)^t \in K^{n-1}$, and for $i\leq 1\leq n-1$ consider the vector
\begin{equation*}
    u_i =
    \begin{pmatrix}
        z \\ z \\ \vdots \\ z \\ \alpha \\ z \\ \vdots \\ z
    \end{pmatrix}
    = \begin{pmatrix}
        0 \\ \vdots \\ 0 \\x_{1,1}\\ x_{1,2}\\ \vdots \\ x_{1,n-1}\\ 0 \\ \vdots \\ 0 
    \end{pmatrix}\in K^{n(n-1)}
\end{equation*}
where $\alpha$ is preceded by $i$-many $z$ blocks. Next let $A_1 = \alpha$ and for $2\leq i\leq n$ take
\begin{equation*}
    A_{n-1,i}:=A_i =
    -\begin{pmatrix}
        x_{i,1}\\ x_{i,2}\\ \vdots \\ x_{i,n-1}
    \end{pmatrix} \text{ and }
    A =
    \begin{pmatrix}
        A_1\\ A_2 \\ \vdots \\ A_n.
    \end{pmatrix}
\end{equation*} We claim that $\mathcal{H} \cdot M_n = 0$, where $M_n$ is the following block matrix
\begin{equation*}
    M_n:= (u_1 \text{ } u_2 \text{ } \ldots  \text{ } u_{n-1} \text{ } A)\in K^{n(n-1)\times n}.
\end{equation*}

In other words, we claim that
\begin{equation}\label{star-eq1}
    0 = \begin{pmatrix}
            B_2 \text{ }& -B_1 \text{ }&  \text{ }&  \text{ }&  \text{ }&  \text{ }& \\
            B_3  && - B_1\\
            B_4  &&& -B_1\\
            B_5 &&&&- B_1\\
            \vdots &&&&&\ddots\\
            B_n &&&&&& -B_1
        \end{pmatrix}
        \begin{pmatrix}
            \text{ }&\text{ }&\text{ }& \text{ }& \text{ }&A_1\\
            A_1 &&&&&A_2\\
            & A_1 &&&& A_3\\
            && A_1 &&& A_4\\
            &&&\ddots &A_1 & A_n
        \end{pmatrix}.
\end{equation}
Expanding out the previous expression via multiplication of the matrices block-wise, we see that (\ref{star-eq1}) is equivalent to both of the following statements holding
\begin{enumerate}
    \item[(i)] $-B_1A_1 = 0$;
    \item[(ii)]for $2\leq i\leq n$, $B_{i}A_1 - B_1A_i=0$.
\end{enumerate}
We easily verify (i) and (ii) using the definitions of the $A_i$ and $B_i$ matrices. Hence (\ref{star-eq1}) holds. Now (\ref{star-eq1}) combined with the fact that $\mathcal{H}$ has $|G||H| = n(n-1)$ many columns gives that
\begin{equation}\label{star-eq2}
    \rk (\mathcal{H})\leq n(n-1) - \rk (M_n).
\end{equation}
Our next step in proving (a) is to show that $M_n$ has rank $n$. $\rk (M_n)\leq n$ since $M_n$ has $n$ rows. Let $\phi: K\to K$ be the ring map induced by the prescription that $x_{i,j}\mapsto 0$ for $2\leq i\leq n$ and $1\leq j\leq n-1$, and $x_{1,j}\mapsto x_{1,j}$ for $1\leq j\leq n-1$. Let $\phi(M_n)$ be the matrix obtained by applying $\phi$ to each entry of $M_n$. After a column permutation, $\phi(M_n)$ is the matrix $\text{diag}(A_1,A_1,\ldots,A_1)\in K^{n(n-1)\times n}$ which clearly has linearly independent columns. Since $\phi$ is a substitution map, we now have
\begin{equation*}
    n \geq \rk (M_n) \geq \rk (\phi(M_n)) = n.
\end{equation*}
Then by (\ref{star-eq2})
\begin{equation*}
    \rk (\mathcal{H})\leq n(n-1) - n = (n-1)^2 -1 = e(St_n)e(C_{n-1})-1 = e(G)e(H) - 1
\end{equation*}
so that (a) is proven.\\

Now we prove (b). So assume that $n\geq 5$ is odd. For $2 \leq i\leq n$ take
\begin{equation*}
    C_i =
    \begin{pmatrix}
        -x_{i,1}\\ 0 \\ -x_{i,3} \\ 0 \\ \vdots \\ -x_{i,n-4} \\ 0 \\ -x_{i,n-2}\\ 0
    \end{pmatrix},
    C_1 =
    \begin{pmatrix}
         0 \\ x_{1,2}\\ 0 \\ x_{1,4}\\ \vdots \\ 0 \\ x_{1,n-3} \\ 0 \\ x_{1,n-1}
    \end{pmatrix},
    D_i =
    \begin{pmatrix}
         0 \\ -x_{i,2}\\ 0 \\ -x_{i,4}\\ \vdots \\ 0 \\ -x_{i,n-3} \\ 0 \\ -x_{i,n-1}
    \end{pmatrix},
    D_1 =
    \begin{pmatrix}
        x_{1,1}\\ 0 \\ x_{1,3} \\ 0 \\ \vdots \\ x_{1,n-4} \\ 0 \\ x_{1,n-2}\\ 0
    \end{pmatrix},
\end{equation*}
and consider the matrices
\begin{equation*}
    C:= \begin{pmatrix}
        C_1\\ C_2\\ \vdots \\ C_n
    \end{pmatrix},
    D:= \begin{pmatrix}
        D_1\\ D_2\\ \vdots \\ D_n
    \end{pmatrix}.
\end{equation*}
We claim that $\mathcal{H} L_n = 0$, where $L_n = (u_1 \ldots u_{n-1} \text{ }C \text{ } D)\in K^{n(n-1)\times n+1}$. By our statement (i) from the proof of (a), we already have $\mathcal{H}u_i = 0$ for $1\leq i\leq n-1$. So it remains to show that $\mathcal{H}C = \mathcal{H}D = 0$. As in, we must prove that
\begin{equation}\label{star-eq3}
    0 = 
    \begin{pmatrix}
            B_2 \text{ }& -B_1 \text{ }&  \text{ }&  \text{ }&  \text{ }&  \text{ }& \\
            B_3  && - B_1\\
            B_4  &&& -B_1\\
            B_5 &&&&- B_1\\
            \vdots &&&&&\ddots\\
            B_n &&&&&& -B_1
        \end{pmatrix}
        \begin{pmatrix}
            C_1  \text{ }& D_2\\
            C_2 & D_2\\
            C_3 & D_3\\
            C_4 & D_4 \\
            \vdots & \vdots\\
            C_n & D_n
        \end{pmatrix}.
\end{equation}Our claim (\ref{star-eq3}) is equivalent to the following two statements both holding
\begin{enumerate}
    \item[(iii)] $B_iC_1 - B_1C_i = 0$ for $2\leq i\leq n$;
    \item[(iv)] $B_i D_1 - B_1D_i = 0$ for $2\leq i\leq n$,
\end{enumerate}
which can be easily seen using the definition of the matrices $B_i,C_i$ and $D_i$. Thus, (\ref{star-eq3}) holds, so that $\rk (\mathcal{H}) \leq n(n-1) - \rk (L_n)$. Now to finish the proof of (b), it remains to show that $L_n$ has rank $n+1$. Firstly, $\rk (L_n) \leq n+1$. On the other hand, after a permutation of the columns and rows of $\phi(L_n)$, we obtain the following rank $n+1$ matrix
\begin{equation*}
    \begin{split}
        &\text{diag}(U,V,\alpha,\alpha,\ldots,\alpha)\in K^{n(n-1)\times n+1} ,
    \\&\text{ where }
    U:=
    \begin{pmatrix}
        x_{1,1}\\ x_{1,3}\\ \vdots \\ x_{1,n-2}
    \end{pmatrix} \in K^{(n-1)/2}
    \text{, and }V:=
    \begin{pmatrix}
        x_{1,2}\\ x_{1,4}\\ \vdots \\ x_{1,n-1}
    \end{pmatrix}\in K^{(n-1)/2}.
    \end{split}
\end{equation*}
Consequently,
\begin{equation*}
    \begin{split}
        n+1 &\geq \rk (L_n) \geq \rk (\phi(L_n)) = \rk (\text{diag}(U,V,\alpha,\alpha,\ldots,\alpha))
    \\&= \rk (U )+ \rk (V )+ (n-1)\rk(\alpha) = n+1,
    \end{split}
\end{equation*}
which completes the proof.
\end{proof}

Now Theorem \ref{thmA-2} (1) follows from Proposition \ref{propB} (stating $\ell(J_{G,H})=\rk(\mathcal{H}(G,H)$) together with
the above Lemma~\ref{lem-star} and Lemma \ref{lem-subgr} (2) (monotonicity). $\square$

We end this section by proving a corollary of Theorem \ref{thmA-2} which states in essence that Proposition \ref{prop-leaf3} cannot be strengthened.

\begin{corollary}
    For every cycle $G:=C_n$ and every graph $G'$ there exists a graph $H$ containing $G'$ and a graph $H'$ obtained by adding a leaf to $H$ such that $\ell(J_{G,H'}) < \ell(J_{G,H}) + \ell(J_G)$.
\end{corollary}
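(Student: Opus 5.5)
We want: for $G=C_n$ and any $G'$, a graph $H\supseteq G'$ and $H'=H$ plus a leaf with $\ell(J_{G,H'})<\ell(J_{G,H})+\ell(J_G)$. Since $C_n$ is independent in $\mathcal{H}_2$ by Corollary \ref{cor-hyp} (orient it acyclically with a single source and single sink; any alternating closed trail would have to use every edge and alternate all the way around, which the orientation forbids, or one checks directly that $\rk\mathcal{H}_2(C_n)=n\le 2n-3$ for $n\ge 3$), we have $\ell(J_G)=n$. The plan is to make $\ell(J_{G,H})$ as large as possible, namely equal to the trivial upper bound $|G||H|=n|H|$ of Remark \ref{rmk-upper}, and then note that $\ell(J_{G,H'})\le n|H'|=n|H|+n$. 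That gives only $\le$, so we need one more unit of slack. We therefore look for $H$ with $\ell(J_{G,H})=n|H|$ and with $\ell(J_{G,H'})<n(|H|+1)$.

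\textbf{Choice of $H$.} Take $H\supseteq G'$ to be a complete graph $K_m$ with $m\ge\max(|G'|,3)$, so that $G'\subseteq H$. By Lemma \ref{lem-complete} and Lemma \ref{lem-switch}, $\ell(J_{C_n,K_m})\ge \ell(J_{K_n,K_m})$ is not directly available; instead use monotonicity in the other direction. We need $\ell(J_{C_n,K_m})=nm$. The cleanest route is to choose $m$ odd and use the row block structure: $\mathcal{H}(C_n,K_m)$ contains $\mathcal{H}(C_n,C_m)$ as a submatrix. If proving $\ell(J_{C_n,C_m})=nm$ is too hard, fall back: by Corollary \ref{corr-mon} and Theorem \ref{thmA}(2) with Lemma \ref{lem-switch}, $\ell(J_{C_n,K_m})\ge\ell(J_{P_n,C_m})=(n-1)m$. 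This bound alone is not enough, so the real strategy is instead to avoid needing $\ell(J_{G,H})$ exactly.

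\textbf{Alternative cleaner route (the one I will pursue).} Let $H'$ be obtained from $H$ by adding a leaf at a vertex so that $H'$ contains a star $St_{n+1}$, with $n+1>n=|C_n|$, i.e. a star of larger order than the cycle $G$. Concretely take $H$ = $G'$ together with a vertex $a$ of degree $n-1$ attached to $n-1$ new leaves (plus connecting $G'$ to $a$), and $H'$ adds one more leaf at $a$, creating $St_{n+1}\subseteq H'$. Compare ranks of the block decomposition (\ref{eq-block}): $\rk\mathcal{H}(G,H')\le \rk\mathcal{H}(G,H)+\rk(A\ M)$, and equality $\ell(J_{G,H'})=\ell(J_{G,H})+n$ would force the new $n$ rows to be independent modulo the old ones. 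Using Lemma \ref{lem-star}(a) (with Lemma \ref{lem-switch}), the subpair $(St_{n+1},C_n)$ is dependent; the dependency found via the kernel vectors $u_i,A$ involves the rows of every edge of the star, in particular the new leaf edge. The key step is to show this dependency is not already explained within $\mathcal{H}(G,H)$, i.e. that it genuinely involves the new rows, giving a drop: $\rk\mathcal{H}(G,H')\le\rk\mathcal{H}(G,H)+n-1$. Then Proposition \ref{propB} concludes.

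\textbf{Main obstacle.} Verifying that the left-kernel vector of $\mathcal{H}(St_{n+1},C_n)$ has a nonzero component on the rows indexed by the new leaf edge. Since the star is edge-transitive and any nonzero left-kernel vector restricted to a single edge-block can be symmetrized, I would argue: if the dependency were supported away from the new edge, then $(St_n,C_n)$ would be dependent; but $(St_n,C_n)$ is a pair of a tree and a unicycle, and by Theorem \ref{thmA}(2) in the case where $St_n$ is replaced... this is only proven for paths, so I would instead check independence of $(St_n,C_n)$ directly via the permutation-expansion method of Lemma \ref{lem-det}, mirroring Lemma \ref{lem-sigma}. This is the step I expect to require the most work.
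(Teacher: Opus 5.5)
Your reduction is sound. If some left-kernel vector of $\mathcal{H}(C_n,H')$ has a nonzero coefficient on one of the $n$ new rows indexed by $(e,e_0)$, $e\in E(C_n)$, then those rows are dependent modulo the row space of $\mathcal{H}(C_n,H)$. Hence $\rk\mathcal{H}(C_n,H')\le\rk\mathcal{H}(C_n,H)+n-1$, and $\ell(J_{C_n})=n$ together with Proposition~\ref{propB} finishes the argument.

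The gap is in how you propose to produce such a vector. You plan to prove that $(St_n,C_n)$ is independent, so that every dependency of $(C_n,St_{n+1})$ is forced onto the $e_0$-rows. This is false for every even $n$. The paper's own table in Example~\ref{example_big} records $\ell(J_{St_6,C_6})=29<30=e(St_6)e(C_6)$. In general, when the cycle length is even, the alternating vectors $C,D$ from the proof of Lemma~\ref{lem-star}(b) satisfy $B_iC_1=B_1C_i$ and $B_iD_1=B_1D_i$ leaf by leaf, whatever the number of leaves. Together with $u_1,\dots,u_{n-1}$ they give a right kernel of dimension $n+1$ for $\mathcal{H}(St_n,C_n)$, hence $\rk\mathcal{H}(St_n,C_n)\le n^2-n-1<e(St_n)e(C_n)$. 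So the permutation-expansion check you flag as the main work cannot succeed for even $n$, and your plan as written covers at most odd cycles. Separately, the vectors $u_i,A$ of Lemma~\ref{lem-star} are right-kernel vectors, so by themselves they say nothing about which rows a row dependency uses.

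The symmetry you mention in passing closes the gap and makes independence of $(St_n,C_n)$ unnecessary. Work in $\mathcal{H}(C_n,St_{n+1})$ on the star's own vertex set. A permutation $\pi$ of the leaves induces the $F$-automorphism $x_{a,b}\mapsto x_{a,\pi(b)}$ of $K$. It sends $f_{e,e'}$ to $\pm f_{e,\pi(e')}$ and transforms Jacobian rows accordingly, up to a column permutation. It therefore carries a left-kernel vector to another one whose support on edge blocks is moved by $\pi$. Since $(C_n,St_{n+1})$ is dependent (Lemma~\ref{lem-star}(a) with Lemma~\ref{lem-switch}) and all edges of a star are equivalent, some dependency has a nonzero coefficient on the $e_0$-rows. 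Padding it with zeros places it inside $\mathcal{H}(C_n,H')$.

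Repaired this way, your route gives a sharper statement: adding a leaf at any vertex of degree at least $n-1$ in $H$ already yields $\ell(J_{C_n,H'})\le\ell(J_{C_n,H})+n-1$. The paper instead argues by contradiction without locating the bad step. Assuming equality for every leaf addition, it climbs from $(C_n,P_3)$, which is independent by Theorem~\ref{thmA}(2), through $St_4,\dots,St_{n+1}$, contradicting the dependence of $(C_n,St_{n+1})$. It then handles $G'$ by disjoint union and additivity over components (Remark~\ref{rmk-comp}).
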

\begin{proof} We start with showing that the claimed inequality holds for some star graph $H$ (which may not contain $G'$). 
    Suppose not. Note how $St_{n+1}$ is obtained by adding $n-2$ leaves to $P_3$, while Theorem \ref{thmA} (2) implies that $\ell(J_{C_n,P_3}) = e(C_n)e(P_3)$. This combined with inductively applying our assumption $n-2$ times that $\ell(J_{G,H'}) = \ell(J_{G,H}) + \ell(J_G)$ for any graph $H$ and any graph $H'$ obtained by adding a leaf to $H$, yields that $\ell(J_{C_n,St_{n+1}}) = e(C_n)e(P_3) + (n-2)e(C_n) = e(C_n)e(St_{n+1})$, which contradicts Theorem \ref{thmA-2} (2). 
    
    Now let $H$ be a star graph such that $\ell(J_{G,H'}) < \ell(J_{G,H}) + \ell(J_G)$ for some graph $H'$ obtained by adding a leaf to $H$. Let $H_1$ be the disjoint union of $H$ and $G'$, and let $H_1'$ be the disjoint union of $H'$ and $G'$. Then by Remark~\ref{rmk-comp} we have $\ell(J_{G,H_1'})< \ell(J_{G,H_1})+\ell(J_G)$, as desired.
\end{proof}

\section{Proof of Theorem \ref{thmA-2} (2)}\label{sec-9:pf2}

In this section we prove Theorem \ref{thmA-2} (2). We shall maintain some of the notation from Section \ref{sec-8:prfA_part2}. In particular fix positive integers $m\geq 4$ and $n\geq 3$ and consider the matrices $B_{r,i}$ of Section \ref{sec-8:prfA_part2}. For this section, take $B_i:= B_{n,i}$ and
\begin{equation*}
U_i =
\begin{pmatrix}
        x_{i,1}\\ x_{i,2} \\ \vdots \\ x_{i,n}
    \end{pmatrix},
    A_i:=
    (-1)^{i+1}U_i = 
    (-1)^{i+1}
    \begin{pmatrix}
        x_{i,1}\\ x_{i,2} \\ \vdots \\ x_{i,n}
    \end{pmatrix},
    L:= 
    \begin{pmatrix}
        A_1 \\ A_2 \\ \vdots \\ A_m
    \end{pmatrix}\in K^{mn\times 1}.
\end{equation*}

\begin{proof}[Proof of Theorem \ref{thmA-2} (2)]

We proceed initially with no assumption on the parity of $m$ or $n$. Let $\mathcal{H} = \mathcal{H}(C_m,C_n)$. Using (\ref{hgh-df}) we compute that
\begin{equation*}
    \mathcal{H}
    =
    \begin{pmatrix}
        B_2 \text{ }&-B_1 \text{ }&\text{ }&\text{ }&\text{ }&\text{ }&\text{ }&\\
        & B_3 & -B_2 &&&&&\\
        &&B_4 & -B_3&&&&&\\
        &&&B_5&-B_4 &&&&\\
        &&&&&\ddots &&\\
        &&&&&&B_m & -B_{m-1}\\
        B_m &&&&&&& -B_1
    \end{pmatrix}\in K^{mn\times mn}.
\end{equation*}

By Lemma~\ref{lem-switch} and Lemma \ref{lem-subgr} (2), it suffices to show that whenever $m$ is even, we have
\begin{equation}\label{cycle-eq1}
    0 = \mathcal{H}L = 
    \begin{pmatrix}
        B_2 \text{ }&-B_1 \text{ }&\text{ }&\text{ }&\text{ }&\text{ }&\text{ }&\\
        & B_3 & -B_2 &&&&&\\
        &&B_4 & -B_3&&&&&\\
        &&&B_5&-B_4 &&&&\\
        &&&&&\ddots &&\\
        &&&&&&B_m & -B_{m-1}\\
        B_m &&&&&&& -B_1
    \end{pmatrix}
    \begin{pmatrix}
        A_1 \\ A_2 \\ \vdots \\ A_m
    \end{pmatrix}.
\end{equation}

Via block multiplication we see that (\ref{cycle-eq1}) holding when $m$ is even follows from the conjunction of the following two statements:

\begin{enumerate}
    \item[(i)] $B_{i+1}A_i = B_iA_{i+1}$ for $1\leq i\leq m-1$, and 
    \item[(ii)] $B_mU_1 = (-1)^m B_1U_m$.
\end{enumerate}
We see (i) and (ii) easily by computing this matrix multiplication using the definitions of the matrices $U_i$, $B_i$ and $A_i$ (obtaining (ii) is where we used that $m$ is even). This completes the proof.
\end{proof}
In view of Conjecture~\ref{question-cycle}, note that in fact $\mathcal{H}(C_m,C_n)\cdot L = 0$ if and only if $m$ is even, since (ii) fails when $m$ is odd.

\section{Proof of Theorem \ref{thmA-2} (3)}\label{sec10:pf-2_pt2}

In this section we prove Theorem \ref{thmA-2} (3) which states that a pair of graphs $(G,H)$ is dependent if one of $G$ or $H$ contains a cycle and the other contains a cycle with a leaf; over suitable field characteristics.
By Lemma~\ref{lem-subgr}, 
it follows from the following lemma.

\begin{lemma}\label{lem-cycle_leaf}
Fix positive integers $m,n\geq 3$. 
    Let $G = C_n$ and let $H$ be the graph on $m+1$ vertices obtained by appending the leaf $\{1,m+1\}$ to the cycle $C_m$. Then $\rk (\mathcal{H}(G,H))<e(G)e(H)$.
\end{lemma}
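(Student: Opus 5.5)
Since $e(C_n)=n=|G|$ and $e(H)=m+1=|H|$, the matrix $\mathcal{H}(G,H)$ of (\ref{hgh-df}) is square, of size $n(m+1)\times n(m+1)$. So instead of finding a dependence among the rows, I will exhibit a nonzero vector in the right kernel. This forces $\rk(\mathcal{H}(G,H))\le n(m+1)-1<e(G)e(H)$, which is the claim.

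The kernel vector comes from the infinitesimal column operation "add column $1$ to column $m+1$", i.e.\ the derivation $D=\sum_{i=1}^n x_{i,1}\,\partial/\partial x_{i,m+1}$. For any row $H_{e,e'}$ of $\mathcal{H}(G,H)$, the product $H_{e,e'}\cdot v$ is exactly $D(f_{e,e'})$, where $v\in K^{n(m+1)}$ has entry $x_{i,1}$ in column $(i,m+1)$ and zero elsewhere. So it suffices to check that $D$ kills every generator $f_{e,e'}$ with $e\in E(C_n)$ and $e'\in E(H)$.

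\emph{Case $e'\neq\{1,m+1\}$.} The vertex $m+1$ is a leaf attached only to $1$, so $f_{e,e'}$ involves no variable $x_{i,m+1}$. Hence the corresponding row of $\mathcal{H}(G,H)$ vanishes in all columns $(i,m+1)$, and $H_{e,e'}\cdot v=0$.

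\emph{Case $e'=\{1,m+1\}$, $e=\{i,j\}$.} Here $f_{e,e'}=x_{i,1}x_{j,m+1}-x_{i,m+1}x_{j,1}$. The only nonzero entries of the row in columns $(\cdot,m+1)$ are $\partial f/\partial x_{j,m+1}=x_{i,1}$ and $\partial f/\partial x_{i,m+1}=-x_{j,1}$. So
\[
H_{e,e'}\cdot v=x_{i,1}x_{j,1}-x_{j,1}x_{i,1}=0 .
\]
Conceptually, replacing column $m+1$ by column $1$ in a $2\times 2$ minor on columns $\{1,m+1\}$ gives a minor with a repeated column.

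The vector $v$ is nonzero, because its entries $x_{i,1}$ are nonzero in $K$. Hence $\mathcal{H}(G,H)\,v=0$ with $v\neq 0$, the square matrix $\mathcal{H}(G,H)$ is singular, and $\rk(\mathcal{H}(G,H))<n(m+1)=e(G)e(H)$. Theorem~\ref{thmA-2}~(3) then follows via Proposition~\ref{propB}, Lemma~\ref{lem-subgr}~(2) and Lemma~\ref{lem-switch}.

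I do not expect a real obstacle. The only point needing care is the counting observation that the matrix is square precisely because $G$ is a cycle ($e(G)=|G|$) and $H$ is unicyclic ($e(H)=|H|$). This is why the same kernel vector, which exists for any $H$ with a leaf, does not contradict the independence of pairs of trees: for trees the matrix has more columns than rows, so a nonzero right kernel vector says nothing about row dependence.
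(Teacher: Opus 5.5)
Your proof is correct and is essentially the paper's. The paper uses the same squareness observation and the same kernel vector ($x_{i,1}$ in column $(i,m+1)$, zero elsewhere), and checks $\mathcal{H}L=0$ by block multiplication, via the symmetry $E_iA_j=E_jA_i$. You verify the same identity as $D(f_{e,e'})=0$ for the derivation $D=\sum_i x_{i,1}\,\partial/\partial x_{i,m+1}$, which is a clean conceptual repackaging of that computation.
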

\begin{proof}
Let $\mathcal{H} = \mathcal{H}(G,H)$. Since $\mathcal{H}$ is square of size $e(G)e(H)$, it suffices to find a vector $L\in K^{n(m+1)}$ for which $\mathcal{H}L = 0$. Denote $z = (0\ldots 0)^t\in K^m$. For $1\leq i\leq n$, let
\begin{equation*}
    A_i =
    \begin{pmatrix}
        z \\ x_{i,1}
    \end{pmatrix},
    \text{ and }
    L =
    \begin{pmatrix}
        A_1 \\ A_2 \\ \vdots \\ A_n
    \end{pmatrix}
    =
    \begin{pmatrix}
        0\\ \vdots \\ 0\\ x_{1,1}\\ \vdots \\ 0 \\ \vdots \\ 0\\ x_{n,1}
    \end{pmatrix}.
\end{equation*}
For $1\leq i\leq n$ take
\begin{equation*}
    E_i =
    \begin{pmatrix}
        x_{i,2} &\text{ }-x_{i,1}  &\text{ } &\text{ } &\text{ } &\text{ } &\text{ }\\
        & x_{i,3}& -x_{i,2}\\
        &&& \ddots\\
        &&&&x_{i,m} & -x_{i,m-1}\\
        x_{i,m}&&&&&-x_{i,1}\\
        x_{i,m+1}&&&&&&-x_{i,1}
    \end{pmatrix}.
\end{equation*}
By (\ref{hgh-df}) we readily find that
\begin{equation*}
    \mathcal{H}
    =
    \begin{pmatrix}
        E_2 &\text{ }-E_1 &\text{ } &\text{ } &\text{ } &\text{ }\\
        & E_3 &-E_2\\
        &&& \ddots\\
        &&&& E_n &-E_{n-1}\\
        E_n&&&&&-E_1
    \end{pmatrix}.
\end{equation*}
We claim that $\mathcal{H}L = 0$, which is equivalent to saying that
\begin{equation}\label{eq-cycle_leaf}
    0 =
    \begin{pmatrix}
        E_2 &\text{ }-E_1 &\text{ } &\text{ } &\text{ } &\text{ }\\
        & E_3 &-E_2\\
        &&& \ddots\\
        &&&& E_n &-E_{n-1}\\
        E_n&&&&&-E_1
    \end{pmatrix}
    \begin{pmatrix}
        A_1 \\ A_2 \\ \vdots \\ A_n
    \end{pmatrix}.
\end{equation}
By block multiplication, we see that (\ref{eq-cycle_leaf}) is equivalent to saying that both of the following statements hold:
\begin{enumerate}
    \item[(i)] $0 = E_{i+1}A_i-E_iA_{i+1}$ for $1\leq i\leq n-1$, and 
    \item[(ii)] $0 = E_nA_1 - E_1A_n$.
\end{enumerate}
On the other hand, the definitions of the $E_i$ and $A_j$ provide that $E_iA_j = E_jA_i$ for $1\leq i,j\leq n$, so (i) and (ii) follow. Hence $\mathcal{H}L = 0$ and the lemma is proven.
\end{proof}

Indeed, Theorem \ref{thmA-2} (3) now follows.

\begin{proof}[proof of Theorem \ref{thmA-2} (3)]
    By Lemma \ref{lem-switch} we can assume that $G$ contains a cycle and $H$ contains a cycle with a leaf. Then due to Lemma \ref{lem-subgr} (2), it is enough to prove the case when $G$ is a cycle and $H$ is a cycle with a leaf. By relabeling the vertices of $H$ we can assume the leaf $e\in E(H)$ in $H$ is attached to the vertex $1\in V(H)$, i.e. that $e = \{1,m+1\}$. Now our claim is exactly the statement of Lemma \ref{lem-cycle_leaf}.
\end{proof}


\section*{Acknowledgments}

Both authors would like to thank the Center for Mathematical Sciences and Applications at Harvard, and the first author also thanks the Hebrew University of Jerusalem, for their support during this project.
Both authors are partially supported by the Israel Science Foundation grant ISF-687/24.

\textbf{AI use statement.}
The authors used Gemini Pro during the revision stage of this paper for identifying typos and to double check calculations. All original results, proofs, and mathematical content predate the use of AI assistance. All AI-suggested edits were reviewed and approved by the authors, who bear sole responsibility for the content.

\bibliographystyle{amsplain}

\bibliography{biblio}

\end{document}